\patchcmd{\part}{\normalfont}{\alessandrofont}{}{}
\patchcmd{\specialsection}{\normalfont}{\alessandrofont}{}{}
\patchcmd{\section}{\normalfont\scshape}{\alessandrofont}{}{}
\patchcmd{\subsection}{\normalfont}{\alessandrofont}{}{}
\patchcmd{\subsubsection}{\normalfont}{\alessandrofont}{}{}
\newcommand{\alessandrofont}{\normalfont\sffamily}
\newlength\mylength
\newtheorem{theorem}{Theorem}[section]
\newtheorem{mthm}{Main Theorem}
\newtheorem{proposition}[theorem]{Proposition}
\newtheorem{question}{Question}
\newtheorem{lemma}[theorem]{Lemma}
\newtheorem{corollary}[theorem]{Corollary}
\newtheorem{algorithm}[theorem]{Algorithm}
\theoremstyle{definition}
\newtheorem{rmk}[theorem]{Remark}
\newtheorem{example}[theorem]{Example}
\newtheorem{definition}[theorem]{{\bf Definition}}
\newtheorem*{definition*}{{\bf Definition}}
\newcommand{\opn}{{\mathcal{O}_{\mathbb{P}^3}}}
\newcommand{\HH}{{\mathcal H}}
\newcommand{\Bb}{\mathcal{B}^{\beta}}
\newcommand{\Pic}{\operatorname{Pic}}
\newcommand{\Hom}{\operatorname{Hom}}
\newcommand{\Ext}{\operatorname{Ext}}
\newcommand{\im}{\operatorname{Im}}
\newcommand{\ch}{\operatorname{ch}}
\newcommand{\Coh}{\operatorname*{Coh}(X)}
\newcommand{\Db}{\operatorname*{D}^b(X)}
\newcommand{\Shione}{\langle \operatorname{S}(\tilde{\mathcal{E}}) \rangle}
\newcommand{\Shial}{\langle \operatorname{S}(\mathcal{E}_1) \rangle}
\newcommand{\Shitwo}{\langle \operatorname{S}(\mathcal{E}_2) \rangle}
\newcommand{\Shi}{\langle \operatorname{S}(\mathcal{E}) \rangle}
\newcommand{\Stab}{\operatorname{Stab}}
\newcommand{\lba}{\lambda_{\beta,\alpha}}
\newcommand{\KnumX}{\operatorname{K}_{0}(X)}
\newcommand{\UpH}{\bar{\mathbb{H}}}
\newcommand{\Id}{\operatorname{Id}}
\newcommand{\Tr}{\operatorname{Tr}}
\newcommand{\Gl}{\operatorname{GL}}
\newcommand{\Dbal}{\operatorname{D^b}}
\newcommand{\Supp}{\operatorname{Supp}}
\newcommand{\Knum}{\operatorname{K}_{0}}
\newcommand{\Rep}{\operatorname{rep}}
\newcommand{\Ker}{\operatorname{Ker}}
\newcommand{\Real}{\operatorname{Re}}
\newcommand{\Imag}{\operatorname{im}}
\title{Determinants, even instantons and Bridgeland stability}
\author{Victor do Valle Pretti}
\address{Universidade de S\~ao Paulo \\
Instituto de Matem\'atica e Estat\'istica \\ Rua do Mat\~ao,1010  \\
05508-090 S\~ao Paulo, Brasil\\       
uni-pretti.w3qpo@simplelogin.com}
\begin{document}

\maketitle

\begin{abstract}
We provide a systematic way of calculating the quiver region associated to a given exceptional collection, and as an application we prove that $\mu$-stable sheaves represented by $2$-step complexes are Bridgeland stable by using the determinant. In the later sections, we focus on the case of even rank $2$ instantons over $\mathbb{P}^3$ and $Q_3$ to prove that the instanton sheaves, instanton bundles and perverse instantons are Bridgeland stable and provide a description of the moduli space near their only actual  wall.
\end{abstract}

\section*{Introduction}

In \cite{Mac}, Macr\`{i} studied topological properties of the space of Bridgeland stability conditions  using that certain Bridgeland stability conditions are equivalent to quiver stability, as in \cite{King}, via a compatible Ext-exceptional collection. Recently, his work was further improved by Chen's paper, \cite{Chen}, describing a simply connected region of the space of stability condition on $\mathbb{P}^3$.

Exceptional collections have also been used to prove the projectivity of the moduli space of semistable objects with respect to a Bridgeland stability condition by associating to it a quiver moduli space, see \cite{Bert, Arc}, and also these can be used to describe explicit examples of Bridgeland stable objects as in Mu's work \cite{Mu}. The latter was a source of inspiration to produce this paper. 

 The most natural example in this situation are the \emph{instanton sheaves}. These sheaves have been heavily studied throughout the last few decades by many authors such as Okonek--Splinder, Atiyah, Hartshorne and many others \cite{AB, Cos, Har, Jar, OS}. One way to define the instanton sheaves was proposed by Atiyah--Barth, \cite{AB}, as the cohomology of a monad \[ 0 \rightarrow \opn^{\oplus c}(-1) \overset{A}{\rightarrow} \opn^{\oplus 2c+r} \overset{B}{\rightarrow} \opn^{\oplus c}(1) \rightarrow 0,\] where the number $c$ is called the charge and $r$ the rank of the instanton sheaf $I=\Ker(B)/\operatorname{Im}(A)$. This definition was generalized independently to other Fano varieties by Faenzi and Kuznetsov, \cite{Faen, Kuz}, by considering instanton bundles as cohomology of linear complexes of exceptional objects in an exceptional collection generating the bounded derived category $\Db$, making them the perfect candidates for examples.

For $\mathbb{P}^3$, the picture is even more explicit due to the work of Jardim, Jardim--Gargate and Jardim--Maican--Tikhomirov, \cite{Jar,JRM,JMT}. In \cite{JMT} the authors prove that the singularity sheaf of an instanton sheaf, a rank $0$ instanton, can be resolved by a free resolution using the bundles $\opn(-i)$, with the integer $i$ ranging from $-1$ to $1$. This allowed us to describe exactly a chamber where the instanton sheaves are Bridgeland stable.

Recently, a number of papers delved into the question of whether the instantons over Fano threefolds are Bridgeland stable, \cite{Qin1,Qin2,ZL}. A common theme in their proofs is the use of the Kuznetsov component and the fact that instantons of minimal charge are in the Kuznetsov component. This approach is necessary due to the nonexistence of a full strong exceptional collection in some Fano threefolds. On the other hand, with our approach in this paper, we are able to achieve their stability for every charge for some of the cases of Fano threefolds where these exceptional collections are known to exist.

The paper is organized as follows: the first two sections recall two already established theories of stability, Bridgeland and quiver stability. Section \ref{Sec-ExcCo} deals with a general theory of exceptional collections and a base for the theory developed later in the paper. Most of what is done in these sections is well known. 

In Section \ref{Sec-QuivR} we define and describe how to calculate the quiver region in a systematic way, see Proposition \ref{Prop-Upper-Half-plane condition}. We finish this section by providing several examples of applications of this proposition to Fano threefolds. Section \ref{Sec-TecLin} provides the technical framework for our work by describing the determinant condition in Lemma \ref{Lemma-Determinant Condition} and proving its relation with $\mu$-stability, which in turn provides an open subset inside the moduli of stable representations of the Kronecker quiver.

In the last section, we describe the stability of instanton sheaves, instanton bundles and of a special class perverse instantons for $\mathbb{P}^3$, and the stability of even instanton bundles on $Q_3$, of any possible charge. For the case of $\mathbb{P}^3$, we also describe the whole structure of their walls inside the quiver region. Furthermore, a stratification of their moduli space by the moduli of stable representations of the Kronecker quiver is also given. 

These results are summarized in Main Theorems \ref{Main Theorem - 3} and \ref{Main Theorem - 4}, where we describe the moduli space of Bridgeland (semi)stable objects with Chern character $(-2,0,c,0)$. These moduli spaces are obtained from crossing the actual walls determined by the resulting object after applying the truncation functor to the chain complex representing the objects in the moduli space. The rank $2$ instantons, after a shift by $[1]$, form an open subset of these moduli spaces.

The Appendix is a collection of intermediate results obtained during the early stages of this project that were useful for doing calculations and early speculations on the scope of this approach.

\textbf{Acknowledgments:} I would like to thank both my advisors Marcos Jardim and Antony Maciocia for the opportunity to work on this topic and the incredible support in the development of this research. I also appreciate Daniele Faenzi for his insights in the theory of intantons on Fano varieties, and my friend Charles Almeida for the helpful discussions. Special thanks to Dapeng Mu for carefully reading the paper and pointing out an oversight on the calculations for the $V_5$ quiver region, and Daniel Bernal for his helpful comments. Finally, I am grateful to the referee for their valuable corrections and insights, which greatly enhanced this work. This work was supported by FAPESP research grant 2016/25249-0, 2019/05207-9, 2022/12883-3 and CAPES research grant 88887.647097/2021-00. This project was partially developed at the University of Edinburgh to which I am grateful for the hospitality and support.

\vspace{10pt}

\textbf{Notation:}\begin{itemize}
    \item Let $X$ be a smooth projective variety over $\mathbb{C}$.
    \item $\KnumX$ is the Grothendieck Group of the coherent sheaves over $X$.
    \item Given $E\in \Db$ and $k$ a positive integer, we say that $E$ is represented by a $k$-step complex if it is isomorphic to a complex $(A^\bullet,d)$ such that there exists $l$ with: $A^i=0$ for $i\neq l+1,...,l+k$.
    \item $\mathbb{H}$ is the strict upper-half plane $\{(\beta,\alpha)\ | \ \text{$\beta \in \mathbb{R}$ and $ \alpha> 0$ }\}$ and the upper-half plane $\UpH= \{(\beta,\alpha)\ |\  \text{$\beta \in \mathbb{R}$ and either $ \alpha> 0$ or $\alpha=0$ and $\beta<0$}\}$.
    \item $\ch_i^{\beta H}(E)$ is the object in $i$-Chow ring $A^i(X)\otimes\mathbb{Q}$ determined by \[ \ch_i^{\beta H}(E)=(e^{\beta H}\ch(E))_i= \Sigma_{j+k=i}\ch_j(E)H^k\frac{\beta^k}{k!}, \] where $H$ is an ample class in the $1$-Chow group and $E\in \Db=\Dbal(\Coh)$.
    \item For any heart of a bounded $t$-structure $\mathcal{A}$ of $\Db$, there exists a natural cohomology functor $\mathcal{H}_\mathcal{A}:\Db \rightarrow \mathcal{A}$.
    \item The only graph that appears in this text that does not represent the $(\beta,\alpha)$-plane of (weak) stability conditions is displayed in Figure \ref{Half-plane condition}, representing the complex plane $\mathbb{C}$.
\end{itemize}

\section{Bridgeland stability}\label{Sec-Bri}

We recall the well-known notions of Bridgeland stability conditions. In particular, we discuss a few Fano varieties where the double-tilt construction yields a geometric stability condition. The definition of stability condition will be done through the notion of \emph{slicing}, just as in \cite{Bri1}. 

    \begin{definition}
    A \emph{slicing} $\mathcal{P}$ in $\Db$ is a collection of additive subcategories $\mathcal{P}(\phi) \subset \Db$ for every $\phi \in \mathbb{R}$ satisfying:
    \begin{itemize}
        \item $\mathcal{P}(\phi)[1]=\mathcal{P}(\phi+1)$,
        \item $\Hom(A,B)=0$ if $A \in \mathcal{P}(\phi_1)$ and $B \in \mathcal{P}(\phi_2)$ with $\phi_1>\phi_2$,
        \item For every object $E \in \Db$ we have a decomposition
        
        \qquad \xymatrix{
        0=E_0 \ar[r]  &E_1 \ar[r] \ar[d] &E_2 \ar[r] \ar[d] &... \ar[r] &E_{n-1} \ar[r] \ar[d] &E_n=E \ar[d]\\
        &A_1 \ar@{.>}[lu] &A_2 \ar@{.>}[lu] &&A_{n-1} \ar@{.>}[lu] &A_n \ar@{.>}[lu]
        }

        such that $A_i \in \mathcal{P}(\phi_i)$ and $\phi_i>\phi_{i+1}$, for every $i$.
        
    \end{itemize}
    \end{definition}

The slicing encodes the information on semistable objects with respect to the following notion of stability condition. 
    
    \begin{definition}
    A \emph{weak stability condition} is a pair $\sigma=(Z, \mathcal{P})$ consisting of a group homomorphism $Z: \KnumX \rightarrow \mathbb{C}$, known as the \emph{stability function}, and a slicing $\mathcal{P}$ satisfying the following conditions:
    \begin{itemize}
        \item For every object $E \in \mathcal{P}(\phi)$, $Z(E)= p\cdot e^{i\phi \pi}$ for some $p \in \mathbb{R}_{\geq0}$.
        
        \item The homomorphism $Z$ can be factored via a surjective map  \[v:\KnumX \rightarrow \Lambda,\] where $\Lambda$ is a finite-dimensional $\mathbb{Z}$-lattice. 
    \end{itemize}
    \end{definition}

For a given weak stability condition $\sigma=(Z,\mathcal{P})$, $E$ is said to be \emph{$\phi_\sigma$-semistable} if it is in $\mathcal{P}(\phi)$ for some $\phi \in \mathbb{R}$. 

    \begin{definition}
    A \emph{Bridgeland stability condition} $\sigma=(Z,\mathcal{P})$ is a weak stability condition such that 
    \begin{itemize}
        \item\textbf{(Positivity)} For every nonzero $E \in \mathcal{P}(\phi)$, $Z(E)= p\cdot e^{i\phi \pi}$ for some $p \in \mathbb{R}_{>0}$.
 
         \item \textbf{(Support Property)} Let $|| \cdot ||_{\mathbb{R}}$ be a norm in $\Lambda\otimes\mathbb{R}$. Then
         \begin{center}
         $ \inf \Big\{ \frac{|Z(E)|}{||v(E)||} : \text{$E \in \mathcal{P}(\phi)$ nonzero and $\phi \in \mathbb{R}$}\Big\} >0$. 
         \end{center} 
    \end{itemize}
    \end{definition}

As proved by Bridgeland in \cite[Proposition 5.3]{Bri1}, this definition is equivalent to the definition of stability conditions using hearts of bounded $t$-structures as $\sigma=(Z,\mathcal{A})$ with $Z$ a stability function and \[ \mathcal{A}=\mathcal{P}((\psi,\psi+1]):=\langle E | E \in \mathcal{P}(\phi)\text{ with $\phi \in (\psi,\psi+1]$}\rangle,\] where $\langle S \rangle$ of a given set $S \subset \Db$ is the extension-complete full subcategory obtained from $S$. The category $\mathcal{A}$ obtained in this way is an abelian subcategory of $\Db$. 

We focus on stability conditions obtained by the methods in \cite{BMT} and use the notation as in \cite{Sch1}. For that purpose, fix $\dim(X)=3$ and an ample class $H$ in $X$, $v: \KnumX \rightarrow \Lambda=\mathbb{Z}\oplus\mathbb{Z}\oplus\mathbb{Z}\frac{1}{2}\oplus\mathbb{Z}\frac{1}{6}$ defined by \[v(E)=(\ch_0(E)H^3,\ch_1(E)H^2,\ch_2(E)H,\ch_3(E)) \] and $(\beta,\alpha)\in \mathbb{H}$ so that \[ Z_{\beta,\alpha}^t(E):= -(\ch_2^{\beta H}(E)H-\frac{\alpha^2}{2}H^3\ch_0(E)) - i(\ch_1^{\beta H}(E))\] is a weak stability function. This stability condition is defined in a heart of bounded $t$-structure $\mathcal{B}^\beta$ obtained by tilting $\Coh$ using Mumford's $\mu$-slope and the torsion pairs $(\mathcal{T}_\beta,\mathcal{F}_\beta)$, as:
\[\mathcal{T}_{\beta} := \{E \in \Coh | \text{ for every $E \twoheadrightarrow Q$, $\mu(Q)>\beta$}\},\]
\[\mathcal{F}_{\beta} := \{E \in \Coh | \text{ for every $F \hookrightarrow E$, $\mu(F)\leq \beta$}\}.\] 

The weak stability condition $(Z_{\beta,\alpha}^t,\mathcal{B}^\beta)$ is known as a \emph{tilt stability condition}. In the heart $\Bb$ we also define the tilt slope of an object $E$ as the slope of the complex number $Z_{\beta,\alpha}^t(E)$, denoted by $\nu_{\beta,\alpha}(E)$ if $Z_{\beta,\alpha}^t(E)\neq0$ and $\nu_{\beta,\alpha}(E)=+\infty$ otherwise. That is,

\begin{equation*}
    \nu_{\beta,\alpha}(E)= -\frac{\Real(Z_{\beta,\alpha}^t(E)}{\Imag(Z_{\beta,\alpha}^t(E))}
\end{equation*}

The same tilting method can be used in $\mathcal{B}^\beta$ with the torsion pair
\[\mathcal{T}_{\beta,\alpha} := \{E \in \mathcal{B}^\beta | \text{ for every $E \twoheadrightarrow Q$, $\nu_{\beta,\alpha}(Q)>0$}\},\]
\[\mathcal{F}_{\beta,\alpha} := \{E \in \mathcal{B}^\beta | \text{ for every $F \hookrightarrow E$, $\nu_{\beta,\alpha}(F)\leq 0$}\}\] 

\noindent to obtain a heart of a bounded $t$-structure $\mathcal{A}^{\beta,\alpha}$, for every $(\beta,\alpha)\in \mathbb{H}$. Let $s$ be a positive real number, the Bridgeland stability function we are interested in this paper is \begin{equation}\label{Stability-Function-Gen} Z_{\beta,\alpha,s}(E)= -\ch^{\beta H}_3(E)+\left(s+1/6 \right)\alpha^2\ch_1^{\beta H}(E)H^2 + i(\ch_2^{\beta H}(E)H-\frac{\alpha^2}{2}H^3\ch_0(E)).\end{equation} The slope of the complex number $Z_{\beta,\alpha,s}(E)$ with respect to an object $E \in \mathcal{A}^{\beta,\alpha}$ will be denoted by $\lambda_{\beta,\alpha,s}(E)$. The associated slice to the stability condition $\sigma_{\beta,\alpha,s}=(Z_{\beta,\alpha,s},\mathcal{A}^{\beta,\alpha})$ will be denoted by $\mathcal{P}_{\beta,\alpha}$.

We define $\Stab(X)$ to be the \emph{space of stability conditions}; it is the given by the set of Bridgeland stability conditions and it has a natural topology as a generalized metric space, more details in \cite{Bri1}. Moreover, for a fixed $s>0$ and a variety $X$ where the above construction yields a Bridgeland stability condition, we have the continous map $\phi_s :\mathbb{H} \rightarrow \Stab(X)$ associating a point in $\mathbb{H}$ to a stability condition $\sigma_{\beta,\alpha,s}=(Z_{\beta,\alpha,s},\mathcal{A}^{\beta,\alpha})$. In this situation, we identify $\mathbb{H}$ with its image by $\phi_s$ and represent the points in the image by either a stability condition $\sigma$ or a point $(\beta,\alpha)\in \mathbb{H}$.

To simplify the notation, as in \cite{JM}, denote the negative of the real part of $Z_{\beta,\alpha,s}(E)$, $-\Real(Z_{\beta,\alpha,s}(E))$, by $\tau_{\beta,\alpha,s}(E)$ and its imaginary part by $\rho_{\beta,\alpha}(E)$. 

\begin{example}
Let $X=\mathbb{P}^3$. The construction above was shown to satisfy the necessary conditions for it to be called a Bridgeland stability condition by \cite{Mac3} and \cite{BMT} for every $(\beta,\alpha)\in \mathbb{H}$ and $s>0$. In this case we can simplify the notation by omitting the $H$ from \eqref{Stability-Function-Gen} because the rational Chow ring of $\mathbb{P}^3$ is naturally isomorphic to $\mathbb{Q}[x]/x^4$.
\end{example}

Our main examples of varieties will be smooth Fano threefolds $X$ with $\Pic(X)=\mathbb{Z}$ and with $H^3(X,\mathbb{Z})=0$, that is, having a full strong exceptional collection in $\Db$, see \cite{Faen}. Let us fix an ample generator for $\Pic(X)$ as $\mathcal{O}_{X}(H)$ such that $i_XH=-K_X$ with $i_X$ being the index of the variety $X$ and $K_X$ its canonical bundle. In this situation we will also use the notation $\mathcal{O}(k)$ for $\mathcal{O}_{X}(kH)$. These varieties are described by the following list as in \cite{Faen}:

\begin{itemize}
    \item $X=\mathbb{P}^3$ with $H^3=1$, if $i_X=4$;
    \item $X=Q_3$ with $H^3=2$, if $i_X=3$;
    \item $X=V_5$ is an hyperplane section of the Grassmannian $Gr(3,5)$ with $H^3=5$, if $i_X=2$;
    \item $X=V_{22}$ is a prime Fano threefold in $\mathbb{P}^{13}$ with $H^3=22$, if $i_X=1$.
\end{itemize}

The existence of Bridgeland stability conditions for these varieties was proved in \cite{Li1}. If one were to consider the isomorphism $\KnumX\otimes\mathbb{Q}=A(X)\otimes\mathbb{Q}=\mathbb{Q}[1,H,L,P]$ of the rational Grothendieck group and the rational Chow group, then it is possible to see that the classes of a point $P$, line $L$, hyperplane $H$ and the trivial class $1$ generate the respective group satisfying the relations $H^2=dL$ and $H\cdot L= P$.

The following definition is the standard definition of a wall on the space of weak and Bridgeland stability conditions.

\begin{definition}Let $E \in \KnumX$.
A \emph{numerical wall} inside the space of (weak)Bridgeland stability conditions for $E$ with respect to an element $F \in \KnumX$ is the subset $\Upsilon_{E,F}$ of stability conditions $\sigma=(Z,\mathcal{A})$ such that 
\[
f_\sigma(E,F) := \Real(Z(E))\Imag(Z(F)) - \Real(Z(F))\Imag(Z(E)) =0
\]

A subset of a numerical wall for $E$ with respect to $F\in \KnumX$ is called an \emph{actual wall} if, for each point $\sigma=(Z,\mathcal{A})$ in this subset, there exists an exact sequence of $\phi_\sigma$-semistable objects \[0 \rightarrow F \rightarrow E \rightarrow Q \rightarrow 0\] in $\mathcal{A}$ with $f_\sigma(E,F)=f_\sigma(E,Q)=0$.  
\end{definition}

We will be concerned with two types of numerical walls: the ones defined for tilt stability conditions $(Z_{\beta,\alpha}^t,\mathcal{B}^\beta)$, called the tilt walls or $\nu$-walls, and those defined by the Bridgeland stability condition $\sigma_{\beta,\alpha,s}=(Z_{\beta,\alpha,s},\mathcal{A}^{\beta,\alpha})$, called the $\lambda$-walls. For example, let $s \in \mathbb{R}_{>0}$ then a point $(\beta,\alpha)\in\mathbb{H}$ is at the numerical $\lambda$-wall for $E \in \KnumX$ with respect to $F$ if \[ f_{\sigma_{\beta,\alpha,s}}(E,F):=\tau_{\beta,\alpha,s}(E)\rho_{\beta,\alpha}(F)-\tau_{\beta,\alpha,s}(F)\rho_{\beta,\alpha}(E)=0.\]

This forms a subset of the numerical wall $\Upsilon_{E,F}$. The tilt wall is defined similarly but instead of using Bridgeland stability condition it is used the tilt stability function.

In \cite[Chapters 3,4,6]{JM}, the authors define and study a number of distinguished curves from which we can extract information about the numerical tilt and $\lambda$-walls by studying their relation with these curves. For the results presented here we will need to define the distinguished curve: \[ \Gamma_{E,s}=\{(\beta,\alpha)\in \mathbb{H}| \tau_{\beta,\alpha,s}(E)=0\}.\]

One important consequence from \cite[Chapter 6]{JM} is that numerical $\lambda$-walls for $E \in \KnumX$ have to cross $\Gamma_{E,s}$ at a horizontal point when $s=\frac{1}{3}$, the same relation that numerical tilt walls have with the distinguished curve $\theta_{E}=\{(\beta,\alpha)\in \mathbb{H}|\rho_{\beta,\alpha}(E)=0\}$. 

For the later parts of the paper it will be important to keep track of the orientation of the numerical walls. This means keeping track of which points in $\mathbb{H}$ an object $F$ destabilizes (numerically) another object $E$ or if it does not affect its stability.

 \begin{definition}
 Let $E,F \in \KnumX$. We will define the \emph{inside} of the numerical wall $\Upsilon_{E,F}$ as the subset where $f_{\sigma}(E,F)<0$. Moreover, the \emph{outside} of the wall $\Upsilon_{E,F}$ is the set of points where $f_{\sigma}(E,F)>0$.
 \end{definition}
 
 In particular, if $E,F$ are in $\mathcal{A}^{\beta,\alpha}$ and $s\in \mathbb{R}_{>0}$ then:  $(\beta,\alpha)\in\mathbb{H}$ is inside (outside) of the $\lambda$-wall in $\Upsilon_{E,F}$ if and only if $\lambda_{\beta,\alpha,s}(E)<(>)\lambda_{\beta,\alpha,s}(F)$.

\begin{example}
Let $X$ be the smooth quadric $Q_3$. This threefold was shown to satisfy the generalized Bogomolov inequality by Schmidt \cite{Sch2}. 

Using the results in \cite{Ott}, we can define the spinor bundle $S$ over $Q_3$ as the pullback of the universal bundle of the Grassmanian by the natural map $s: Q_3 \rightarrow Gr(2^3-1,2^{4}-1)$. This bundle is almost self-dual satisfying $S^\ast=S(1)$.

Comparing this to \cite{Sch2} we can see that the author is using $S^\ast$ as his spinor bundle when defining the quiver region, so it is important to keep this in mind when comparing our results and that one from \cite{Sch2}. In \cite[Theorem 2.1]{Ott}, the author proves that spinor bundles are $\mu$-stable and in \cite[Lemma 4.6]{Sch2} it is proven that $S^\ast(-1)[1]$ is tilt stable at the line $\beta=0$ of the upper-half plane $\mathbb{H}$ of tilt stability conditions. To our use here, we will need to translate the upper half-plane by tensoring by $\mathcal{O}_{Q_3}(-1)$ so that we will study the tilt stability of $S^{\ast}(-2)[1]$. 

Moreover, applying the numerical conditions of \cite[Theorem 3.3]{Sch1} and \cite{AMac} we can see that every numerical tilt wall for the object $S^{\ast}(-2)[1]$ has to cross the hyperbola $\rho_{\beta,\alpha}(S^{\ast}(-2)[1])=0$ at their apex and can never cross the numerical wall $\beta=\mu(S^{\ast}(-2)[1])$. So that we have the following Figure \ref{TiltStabSpinor}:

\begin{figure}[htp]
    \centering
    \includegraphics[width=10cm]{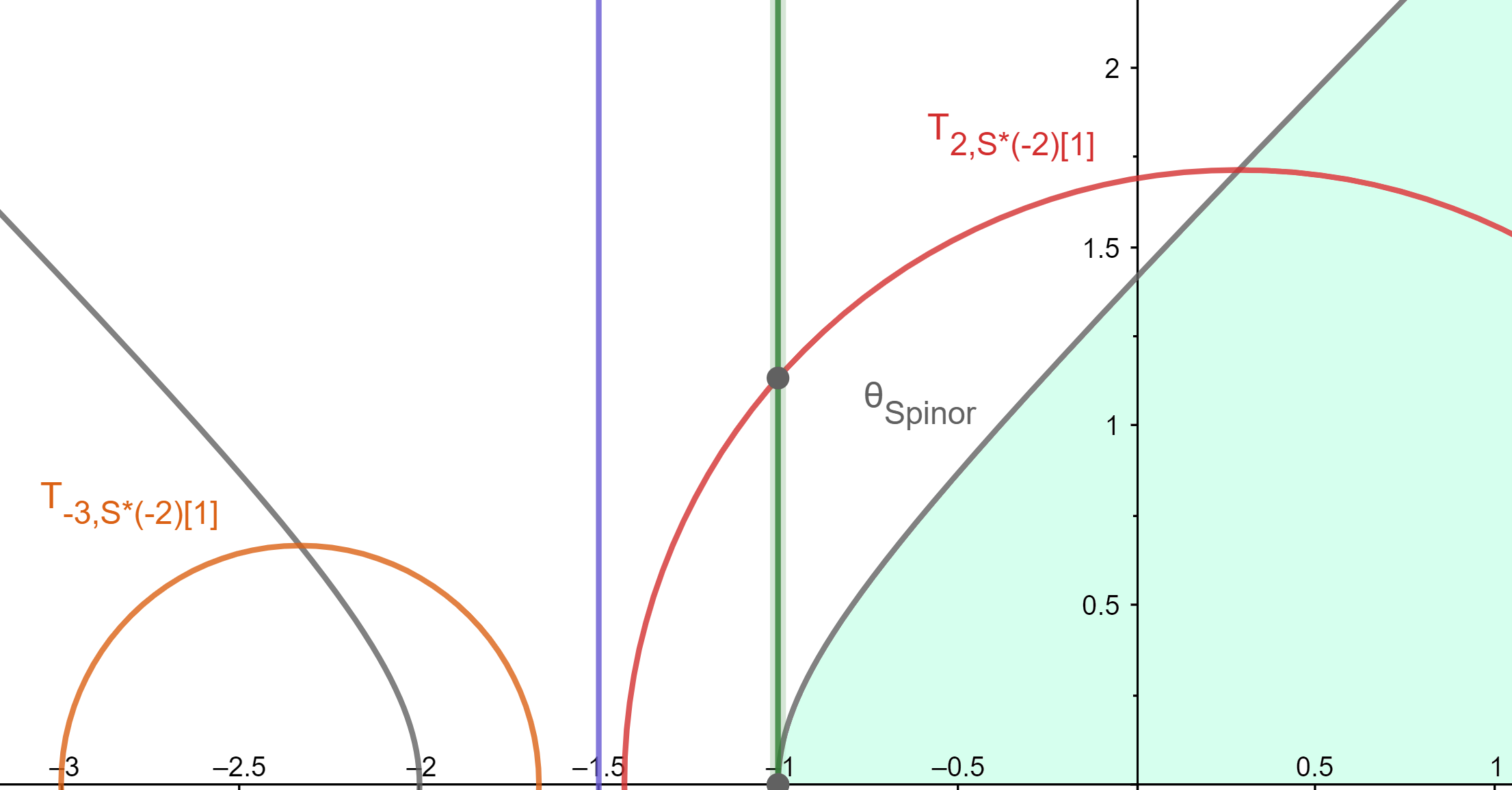}
    \caption{An example of two numerical tilt-walls: The first with respect to $S^\ast(-2)[1]$ and $\mathcal{O}_{Q_3}(2)$, $T_{2,S^\ast(-2)[1]}$, and the second to $S^\ast(-2)[1]$ and $\mathcal{O}_{Q_3}(-3)$, $T_{-3,S^\ast(-2)[1]}$. The region, in green, representing the points where $S^\ast(-2)[2] \in \mathcal{A}^{\beta,\alpha}$.}\label{TiltStabSpinor}
\end{figure}

Since $S^\ast(-2)[1]$ is tilt stable at $\beta=-1$, we can see that no numerical wall to the right-hand side of $\beta=-3/2=\mu(S^{\ast}(-2))$ can be an actual destabilizing tilt wall. So that $S^\ast(-2)[1] \in \mathcal{F}_{\beta,\alpha}$ if $(\beta,\alpha)$ satisfies $p_{\beta,\alpha}(S^\ast(-2)[1])\leq0$, i.e. the light green region in Figure \ref{TiltStabSpinor}.
\end{example}

\section{Quiver Stability}\label{Sec-QuiSt}

Another important notion of stability we will use is that of quiver stability as defined by King in \cite{King}, the main results in the paper come from the interplay of these two notions of stability.

A \emph{quiver} is a collection $Q=(Q_0,Q_1,s,t)$, where $Q_0$ is the set of vertices, $Q_1$ is the set of arrows between the vertices and $s,t:Q_1\rightarrow Q_0$ are the starter and terminal ends of the arrows, respectively. Our concern in this setting is with finite quivers, that means $Q_0$ and $Q_1$ are finite sets. 

The path algebra of $Q$ is an associative $k$-algebra $kQ$ of the quiver $Q$ with a basis given by paths in $Q$ and a product given by the formal concatenation of elements in $Q_1$ such that the multiplication satisfies $p \cdot q=0$ if $s(p)\neq t(q)$. Extending the functions $s,t$ to $kQ$, we define a relation for the quiver $Q$ to be any admissible ideal $J$ in $kQ$, that is, $J$ is an ideal generated by a finite set of $k$-linear combination of paths with the same starter and terminal ends. A quiver $Q$ with relations $J$ will be denoted by $(Q,J)$.

\begin{example}\label{Ex-Kronecker}
The $n$-Kronecker quiver $K_n$ is defined by $Q_0$ with two elements $p,q$ and $n$ arrows from $p$ to $q$ in $Q_1$. 
\end{example}

\begin{example}\label{Ex-Beilinson}
The Beilinson quiver $B_n$ is the realization of the necessary conditions to obtain a $n+1$-step complex in terms of consecutive sums of linear sheaves in the canonical helix in $\mathbb{P}^n$. That is, a quiver with $n+1$ vertices $0,...,n+1$ such that there are $n+1$ arrows between $i$ and $i+1$, for each $i$, named $\{x_j^i| j \in \{0,...,n\}\}$ satisfying the relation $x_j^{i+1}x_k^{i}-x_k^{i+1}x_j^{i}=0$, for every $i,j,k$.
\end{example}

A \emph{representation} of the quiver $Q$ is given by a graded $k$-vector space $V=\bigoplus_{i \in Q_0} V_i$ and $(f_h)_{h \in Q_1} \in \bigoplus_{h \in Q_1}\Hom(V_{s(h)},V_{t(h)})$. A morphism between two representations $(V,(f_h)_{h \in Q_1})$ and $(W,(g_h)_{h \in Q_1})$ is a collection of linear transformations $T:V_i \rightarrow W_i$ for each $i \in Q_0$, such that $T_{t(a)}\circ f_{a}=g_a \circ T_{s(a)}$. Moreover, if $(Q,J)$ is a quiver with relation, it is said that a representation of $Q$, $(V,(f_h)_{h \in Q_1})$, satisfies the relation $J$ if the compositions induced by the elements in $J$ are equal to $0$ as linear transformations.

With that definition it is possible to define the categories $\Rep(Q)$ and its subcategory $\Rep(Q,J)$ of representations of $Q$ and representations satisfying the relation $J$, respectively. These categories are both abelian. A dimension vector $\underline{\dim}:\Rep(Q,J) \rightarrow \mathbb{Z}^{Q_0}$ is well defined by letting $\underline{\dim}((V,(f_h)_{h \in Q_1}))=(\dim(V_i))_{i \in Q_0}$.

In \cite{King}, King explored the idea of defining stable representations to produce a GIT quotient in terms of a linear relation on the dimension vector of subrepresentations. Fix $\underline\theta \in \mathbb{R}^{Q_0}$, a representation $(V,(f_h)_{h \in Q_1})$ is said to be $\theta$-(semi)stable if \[\underline\theta \cdot \underline{\dim}(V,(f_h)_{h \in Q_1}):= \sum_{i \in Q_0}\theta_i \dim(V_i)=0\] and for every subrepresentation $(W,(g_h)_{h \in Q_1})$ of $(V,(f_h)_{h \in Q_1})$, \[\underline\theta \cdot \underline{\dim}(W,(g_h)_{h \in Q_1})\leq(<) 0.\] 

With this notion of stability is possible to define the moduli space of (semi)stable representations of a quiver with relations $(Q,J)$ with given dimension vector $\underline d \in \mathbb{Z}^{Q_0}$ with respect to $\underline\theta$ as $\mathcal{M}^{\underline\theta-s}_{Q,J}(\underline d)$($\mathcal{M}^{\underline\theta-ss}_{Q,J}(\underline d)$), where we identify the (semi)stable objects that have the same Jordan-H\"older filtration in $\Rep(Q,J)$. When $J=0$ we will suppress the subscript $J$ from the notation. Further information can be found at \cite{King, Andre}.

One way to construct the moduli spaces $\mathcal{M}^{\underline\theta-s}_{Q,J}(\underline d)$ and $\mathcal{M}^{\underline\theta-ss}_{Q,J}(\underline d)$ is by constructing a GIT quotient of the space $R_{\underline d}=\bigoplus_{h \in Q_1}\Hom(V_{s(h)},V_{t(h)})$ by the algebraic group of product of nonsingular linear transformations $\Gl_{\underline d}=\prod_{i \in Q_0}\Gl(d_i)$, acting via conjugation, which descends to $P\Gl_{\underline d}=\Gl_{\underline d}/k^\ast(\Id)_{i \in Q_0}$.

\begin{rmk}\cite[Remark 4.1.15]{Andre}
For a given $\underline\theta$ and dimension vector $d$, $\mathcal{M}^{\underline\theta-s}_{Q}(\underline d)$ is a smooth variety of dimension $1-\chi(d)= \Sigma_{h \in Q_1}d_{s(h)}d_{t(h)}-\Sigma_{i \in Q_0}d_i^2+1$. For the case of the Kronecker quiver $K_c$ with dimension vector $d=(c,2c+2)$ or $d=(2c+2,c)$, for some $c \in \mathbb{Z}_{\geq 0}$, it means that $\mathcal{M}^{\underline\theta- s}_{Q}(\underline d)$ has dimension $3c^2-3$.
\end{rmk}

\begin{rmk}\cite[Remark 4.1.15]{Andre}
By applying King's GIT-construction of the moduli space of semistable representations $\mathcal{M}^{\underline\theta-ss}_{Q,J}(\underline d)$, we have that it is a projective variety and the loci of stable representations $\mathcal{M}^{\underline\theta-s}_{Q,J}(\underline d)$ is an open subset of $\mathcal{M}^{\underline\theta-ss}_{Q,J}(\underline d)$. When $\underline d$ is $\underline\theta$-coprime, which occurs if $\underline\theta \cdot \underline{d'}\neq 0$ for every $\underline{d'}<\underline d$, then $\mathcal{M}^{\underline\theta-ss}_{Q,J}(\underline d)=\mathcal{M}^{\underline\theta-s}_{Q,J}(\underline d)$.
\end{rmk}

\section{Exceptional Collections}\label{Sec-ExcCo}

\subsection{Definitions}

We follow the notation used in \cite{Mac} for exceptional collections and related concepts. A structure of linear triangulated category in $\Db$ is such that for any $A,B \in \Db$ we have the $\mathbb{Z}$-graded vector space \[ \Hom^\bullet_{\Db}(A,B)=\underset{i\in \mathbb{Z}}{\bigoplus}\Hom^{i}_{\Db}(A,B)=\underset{i \in \mathbb{Z}}{\bigoplus}\Hom_{\Db}(A,B[i]).\] 
For any $\mathbb{Z}$-graded vector space $V^\bullet$ and $E \in \Db$, let $V^\bullet \otimes E:=\underset{i \in \mathbb{Z}}{\bigoplus}V^i \otimes E$, where $V^i \otimes E$ can be understood as the direct sum $\dim V^i$ copies of $E$. The dual of a $\mathbb{Z}$-graded vector space $V^\bullet$ is $V^{\bullet\ast}$ defined as $(V^{\bullet\ast})^i:=(V^{-i})^\ast$ so that the dual of an object $V^\bullet \otimes E \in \Db$ is $(V^{\bullet\ast})\otimes E^\vee$, with $E^\vee=R\operatorname{\mathcal{H}om}(E,\mathcal{O}_X)$.

\begin{definition}
An object $E$ in $\Db$ is called \emph{exceptional} if $\Hom^\bullet(E,E)=\mathbb{C}$. A collection $\mathcal{E}:=\{E_0,...,E_n\}$ of exceptional objects is called \emph{exceptional} if satisfies $\Hom^\bullet(E_i,E_j)=0$ if $i>j$. An exceptional collection can have other properties such as:
\begin{itemize}
    \item \emph{Strong}: if $\Hom^k(E_i,E_j)=0$ for all $i,j$ and $k\neq 0$,
    \item \emph{Ext}: if $\Hom^{\leq 0}(E_i,E_j)=0$ for all $i\neq j$,
    \item \emph{Full}: if the category generated by $\mathcal{E}$ via shifts and extensions is $\Db$.
\end{itemize}
\end{definition}

 One constructs an Ext-exceptional collection from a strong exceptional collection by performing the following trick: starting with a strong exceptional collection $\mathcal{E}=\{E_0,E_1,E_2,E_3\}$ then $S(E):=\{E_0[3],E_1[2],E_2[1],E_3\}$ is an Ext-exceptional collection. This allows us to reduce the search for possible Ext-exceptional collections by using strong exceptional collections.

\begin{proposition}\label{Theorem-Macri}\cite[Lemma 3.14 and Lemma 3.16]{Mac}
Let $\{E_0,...,E_n\}$ be a full  Ext-exceptional collection in $\Db$ then the category generated by extensions  $\langle E_0,...,E_n\rangle$ is the heart of a bounded $t$-structure. Assume that $(Z,\mathcal{P})$ is a stability condition and $E_0,...,E_n$ are all in $\mathcal{P}((\phi,\phi+1])$ for some $\phi \in \mathbb{R}$, then $\langle E_0,...,E_n\rangle=\mathcal{P}((\phi,\phi+1])$ and each $E_i$ is stable.
\end{proposition}

Theorem \ref{Theorem-Macri} leads to the following definition used to determine the regions of $\mathbb{H}$ we are interested in. 

\begin{definition}\label{HplaneCon}
A full Ext-exceptional collection $\{E_0,...,E_n\}$ satisfies the \emph{upper-half plane condition} for a stability condition $(Z,\mathcal{P})$ if there exists a $\phi \in \mathbb{R}$ such that $\langle E_0,...,E_n\rangle=\mathcal{P}((\phi,\phi+1])$.
\end{definition}

As a consequence of Theorem \ref{Theorem-Macri}, Definition \ref{HplaneCon} is equivalent to the notion of $\sigma$-exceptional collection introduced in \cite[Definition 3.19]{DK}.

\begin{rmk}\label{Ex-phiRotation}
Let $\mathcal{E}=\{E_0,E_1,E_2,E_3\}$ be a full Ext-exceptional collection and $\mathcal{C}=\langle\mathcal{E}\rangle$ the heart of bounded $t$-structure generated by $\mathcal{E}$. If $\sigma_{\beta,\alpha,s}=(Z_{\beta,\alpha,s},\mathcal{A}^{\beta,\alpha})$ is a stability condition in $X$ such that $\mathcal{C}\subset D^{\beta,\alpha}:=\langle\mathcal{A}^{\beta,\alpha},\mathcal{A}^{\beta,\alpha}[1]\rangle$ then we can define the $\mathbb{C}$-slope of a semistable object $E \in D^{\beta,\alpha}$ as the unique $\psi \in (0,2]$ with $Z_{\beta,\alpha,s}(E)=r\cdot e^{\pi\psi i}$ and $r \in \mathbb{R}_{>0}$. Moreover, we can rephrase Definition \ref{HplaneCon} as the existence of a $\phi \in (0,1]$ where the upper-half plane $\UpH$ rotated by $(\phi \pi)$-degrees contains all the complex numbers $Z_{\beta,\alpha,s}(E_i)$, for all $i$.
\end{rmk}

\begin{figure}[htp]
    \centering
    \includegraphics[width=10cm]{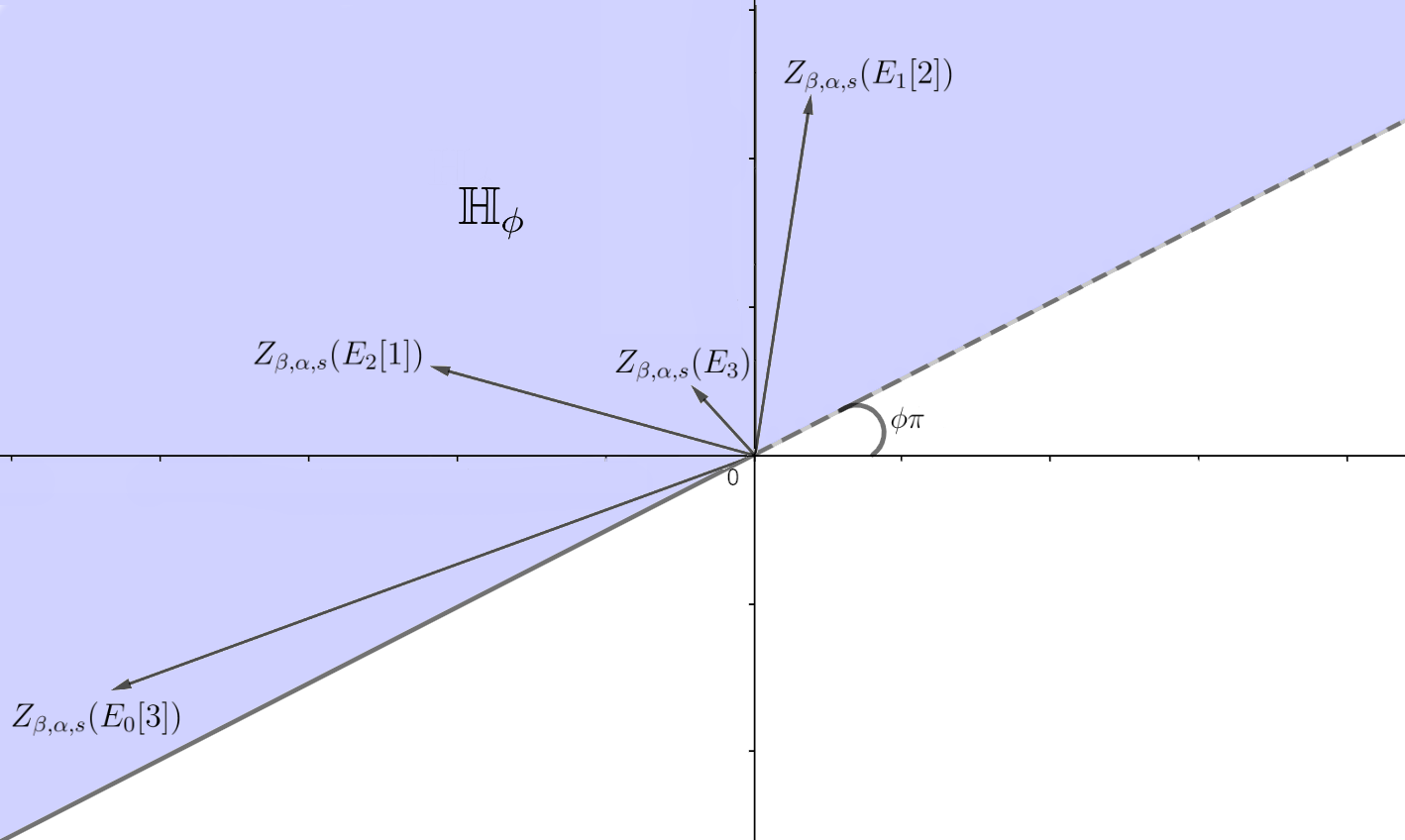}
    \caption{The purple region is $\bar{\mathbb{H}}_\phi$}\label{Half-plane condition}
\end{figure}

We will denote by $\UpH_\phi$ the upper-half plane obtained from rotating $\UpH$ by $(\phi\pi)$-degrees. In the previous example it is clear that if $E$ is $\sigma_{\beta,\alpha,s}$-semistable with $\mathbb{C}$-slope $\phi \in (0,2]$ then either $E$ or $E[1]$ is in $\mathcal{C}$.

\begin{rmk}
Note that the choice of $\phi$ in Remark \ref{Ex-phiRotation} is not  canonical and we can have an interval of angles satisfying this condition. Furthermore, let $\mathcal{E}$ be a full Ext-exceptional collection satisfying the upper-half plane condition for all $\phi \in (\psi,\psi')\subset (0,1]$ and fix  $(\beta,\alpha) \in \mathbb{H}$. Then there is no semistable object $E \in \mathcal{A}^{\beta,\alpha}$ satisfying $Z_{\beta,\alpha,s}(E)=r_E \cdot e^{i\pi\gamma}$ with $\gamma \in (\psi,\psi')$. This is due to the fact that if such an $E$ were to exist, then it would belong to $\mathcal{P}_{\beta,\alpha}((\psi,\psi+1])$ but not to $\mathcal{P}_{\beta,\alpha}((\psi',\psi'+1])$. However, both of these categories are equal to $\mathcal{C}$.    
\end{rmk}

Moreover, being able to generate an abelian category using an Ext-exceptional collection is a very useful tool to produce and classify the numerical $\lambda$-walls for the object $F \in \langle\mathcal{E}\rangle$. 

To do that we use the structure of the admissible triangulated subcategory \[\Tr(E_{k+1},...,E_n)\xrightarrow{i_{k\ast}} \Dbal(X) \] as in \cite[Lemma 3.14]{Mac} with its right orthogonal subcategory $\Tr(E_0,...,E_k)$, consequence of \cite[Lemma 6.1]{Bon1}. Associated to it, there exists a functor $\Db \xrightarrow{j_k^\ast}\Tr(E_0,...,E_k)$. By the same argument therein, it is possible to see that the right adjoint functor $i_k^!$ to $i_{k\ast}$ decomposes the object $E \in \langle\mathcal{E}\rangle$ in the exact sequence \begin{equation}\label{Truncation Functor}
    0 \rightarrow i_k^!(E) \rightarrow E  \rightarrow j_k^\ast(E) \rightarrow 0.
\end{equation} 

Denote the functors $i_k^!$ and $j_k^\ast$ by the truncation functors $\tau_{>k}$ and $\tau_{\leq k}$, respectively.

\section{Quiver Regions}\label{Sec-QuivR}

A quiver region is where a Bridgeland stability condition behaves as a quiver stability, in the sense of Section \ref{Sec-QuiSt}. The goal of this section is to lay the foundations on how to obtain the quiver regions. We start by defining what a quiver region is meant to be, and providing a simple way of calculating them. This allows for us to then define the specific quiver regions on which we will work on.

In this section we assume that $X$ is a smooth projective variety such that $\Stab(X)$ is non empty.

\begin{definition}\label{DEF-Excp-region}
 A subset $\widetilde{R_\mathcal{E}}$ of $\Stab(X)$ is called a \emph{quiver region} for a full Ext-exceptional collection $\mathcal{E}=\{E_0,...,E_n\}$ if $\mathcal{E}$ satisfies the upper-half plane condition (see Definition \ref{HplaneCon}) for every stability condition $(Z,\mathcal{P})\in \widetilde{R_\mathcal{E}}$. When $\dim(X)=3$, $\mathcal{G}$ is a strong exceptional collection and we fix $s>0$, denote by \[R_\mathcal{G}=\widetilde{R_{S(\mathcal{\mathcal{G}})}} \cap \mathbb{H} \subset \Stab(X),\]
where we identify $\mathbb{H}$ with the image of $\phi_s:\mathbb{H}\rightarrow \Stab(X)$.
\end{definition}

\begin{rmk}
 The number $\phi_{\beta,\alpha} \in (0,1]$ such that $\mathcal{E} \subset \mathcal{P}_{\beta,\alpha}((\phi_{\beta,\alpha},\phi_{\beta,\alpha}+1]))$ can vary when choosing $(\beta,\alpha) \in R_\mathcal{E}$. In addition, we do not assume that a quiver region is maximal in the sense that we are not assuming that these are the only points in $\mathbb{H}$ satisfying  $\mathcal{E} \subset \mathcal{P}_{\beta,\alpha}((\phi_{\beta,\alpha},\phi_{\beta,\alpha}+1]))$ for some  $\phi_{\beta,\alpha} \in (0,1]$.
\end{rmk}

 Conceptually, the existence of non-empty quiver regions for a full Ext-exceptional collection can be regarded as a discretization of the categories $\mathcal{A}^{\beta,\alpha}$, which vary continuously. Following Mu's definition \cite[Definition 5.2]{Mu}, and the relation between the category generated by exceptional collections and quivers expressed in \cite{Mac} and \cite{Bert}, we define a dimension in the category generated by a strong exceptional collection of sheaves. 
 
 \begin{definition}
 Let $\mathcal{E}=\{E_0,...,E_n\}$ be a full strong exceptional collection of sheaves in $\Db$. The \emph{dimension vector} for an object $F \in \langle S(\mathcal{E})\rangle$ is $\dim_\mathcal{E}(F)=[a_0,...,a_n]$, where $\ch(F)=\Sigma_{i=0}^n(-1)^{n+1-i}a_i\ch(E_i)$ and $a_i \in \mathbb{Z}$.
 \end{definition}
 
 There exists a partial ordering for the dimension of objects in $\Db$ given by $\dim_\mathcal{E}(F)=[b_0,...,b_n] \leq \dim_\mathcal{E}(E)=[a_0,...,a_n]$ if and only if $b_i\leq a_i$, for each $i$. When clear from the context, we will omit the subscript $\mathcal{E}$.
 
 \begin{lemma}\label{Lemma-Dimension}
 Let $\mathcal{E}=\{E_0,...,E_n\}$ be a full strong exceptional collection of sheaves in $\Db$ and $S(\mathcal{E})$ its shift. Then for $E \in \langle S(\mathcal{E})\rangle$:
 \begin{itemize}
     \item[(a)] $\dim_\mathcal{E}(E)=[a_0,...,a_n]$ with $a_i\geq 0$ for all $i$,
     \item[(b)] If \[ 0 \rightarrow F \rightarrow E \rightarrow G \rightarrow 0\] is an exact sequence in $\langle S(\mathcal{E})\rangle$ then $\dim_\mathcal{E}(F)+\dim_\mathcal{E}(G) \leq \dim_\mathcal{E}(E)$.
 \end{itemize}
 \end{lemma}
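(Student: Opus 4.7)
The plan is to derive (a) from the extension-closure description of $\langle S(\mathcal{E})\rangle$, and to derive (b) from (a) using additivity of the Chern character. Since $\mathcal{E}$ is a full exceptional collection, the classes $[E_0],\ldots,[E_n]$ form a $\mathbb{Z}$-basis of $\KnumX$, so the Chern characters $\ch(E_0),\ldots,\ch(E_n)$ are linearly independent in $A(X)\otimes\mathbb{Q}$; hence the coefficients $a_i$ appearing in the defining expansion of $\ch(F)$ are uniquely determined by $F$, and the integrality condition in the definition pins them down.

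For part (a), Theorem~\ref{Theorem-Macri} tells me that $\langle S(\mathcal{E})\rangle$ is the heart of a bounded $t$-structure, and by construction it is the extension closure of $S(\mathcal{E}) = \{E_0[n],\ldots,E_n\}$. I would argue by induction on the number of extensions needed to build $F$ that any object of $\langle S(\mathcal{E})\rangle$ admits a finite filtration $0 = F_0 \subset F_1 \subset \cdots \subset F_m = F$ in this abelian heart with successive quotients $F_k/F_{k-1} \cong E_{i_k}[n-i_k]$ for some $i_k \in \{0,\ldots,n\}$. Letting $\tilde a_i$ denote the number of indices $k$ with $i_k = i$, additivity of $\ch$ along the filtration gives $\ch(F) = \sum_i \tilde a_i \ch(E_i[n-i]) = \sum_i (-1)^{n-i}\tilde a_i \ch(E_i)$, and comparing with the defining formula (up to sign convention) identifies $a_i = \tilde a_i \in \mathbb{Z}_{\geq 0}$, proving (a).

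For part (b), applying the Chern character to the short exact sequence $0 \to F \to E \to G \to 0$ yields $\ch(E) = \ch(F) + \ch(G)$. Expanding each side in the basis $\{\ch(E_i)\}$ and invoking the uniqueness recorded above, the dimension vector is additive: $\dim_\mathcal{E}(E) = \dim_\mathcal{E}(F) + \dim_\mathcal{E}(G)$ componentwise. Combined with (a), the vectors $\dim_\mathcal{E}(F)$ and $\dim_\mathcal{E}(G)$ have non-negative entries, so each is componentwise bounded above by $\dim_\mathcal{E}(E)$, as required.

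The main subtlety is the filtration step in (a): no canonical filtration exists, and one has to show the multiplicities $\tilde a_i$ do not depend on the choice. This is automatic from the linear independence of the $\ch(E_i)$, since the left-hand side $\ch(F)$ is intrinsic; equivalently, one can invoke the equivalence of $\langle S(\mathcal{E})\rangle$ with a category of representations of a quiver with relations \cite{Mac, Bert}, under which the $E_i[n-i]$ become the simple objects and $\dim_\mathcal{E}$ is the usual quiver dimension vector, for which both (a) and (b) are standard.
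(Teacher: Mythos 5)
Your argument is correct and follows essentially the same route as the paper, whose proof is just a two-line sketch of exactly this: part (a) from the fact that objects of $\langle S(\mathcal{E})\rangle$ are built by extensions of the shifted exceptional objects, and part (b) from additivity of the Chern character together with (a). Your added care about well-definedness of the coefficients (linear independence of the $\ch(E_i)$) and the sign bookkeeping is a harmless elaboration of the same idea, not a different approach.
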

 
 \begin{proof}
 The first item in the Lemma is clear from the definition, as objects in $\langle S(\mathcal{E})\rangle$ are obtained by isomorphism classes of extensions of the objects in $S(\mathcal{E})$. Item $(b)$ is a consequence of the additive behaviour of the Chern character in $\KnumX$.
 \end{proof}

  The next proposition is responsible for calculating quiver regions when applied to $R_\mathcal{E}$ in $\mathbb{H}$ for a fixed $s>0$. We will describe the regions we will work with in the following examples. 
 
 \begin{proposition}\label{Prop-Upper-Half-plane condition}
  Let $\mathcal{E}=\{E_0,...,E_n\}$ be a full Ext-exceptional collection in $\Db$ such that $E_i \in \mathcal{P}((\phi,\phi+2])$, for some $\phi \in \mathbb{R}$ and some slicing $\mathcal{P}$. Then $\mathcal{E}$ satisfies the upper-half plane condition with respect to a Bridgeland stability condition $\sigma=(Z,\mathcal{P})$ if and only if there exists $r\in\{0,...,n\}$ such that:
  \begin{itemize}
      \item[($\ast$)]   $(Z,\mathcal{P})$ is inside the walls $\Upsilon_{E_r[-1],E_j[-1]}$, for all $j \in \{j | E_j \in \mathcal{P}((\phi+1,\phi+2])\}$, outside the walls $\Upsilon_{E_r[-1],E_k}$ for all $k \in \{k | E_k \in \mathcal{P}((\phi,\phi+1])\}$.
  \end{itemize}
  
 \end{proposition}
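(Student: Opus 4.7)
The plan is to translate the upper-half plane condition and the condition $(\ast)$ into inequalities on the phases $\phi_i := \phi_{\mathcal{P}}(E_i)\in(\phi,\phi+2]$, and then match them up. Partition the index set into $K = \{k : \phi_k\in(\phi,\phi+1]\}$ and $J = \{j : \phi_j\in(\phi+1,\phi+2]\}$, so that $\mathcal{E}$ satisfies the upper-half plane condition if and only if there exists $\phi'$ with every $\phi_i\in(\phi',\phi'+1]$, which (under the conventions $\max_\emptyset = -\infty$ and $\min_\emptyset = +\infty$) is equivalent to the single numerical inequality
\[
\max_{j\in J}\phi_j \;-\; \min_{k\in K}\phi_k \;<\; 1.
\]

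The main computational input is the identity, obtained by writing $Z(E)=|Z(E)|e^{i\pi\phi_{\mathcal{P}}(E)}$ and using $\tau=-\Re Z$, $\rho=\Im Z$,
\[
f_\sigma(A,B) \;=\; |Z(A)||Z(B)|\sin\!\bigl(\pi(\phi_{\mathcal{P}}(A)-\phi_{\mathcal{P}}(B))\bigr),
\]
together with the shift relation $\phi_{\mathcal{P}}(E[-1]) = \phi_{\mathcal{P}}(E)-1$. Whether $\sigma$ is strictly inside or outside a non-trivial wall $\Upsilon_{A,B}$ is then read off from the sign of $\sin(\pi\cdot)$ on an explicit real interval.

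For the forward direction, the first step is to show that the index $i$ in $(\ast)$ must lie in $K$: if $i\in J$, the $j=i$ slot of the inside condition would demand $f_\sigma(E_i[-1],E_i[-1])<0$, which is impossible since this quantity vanishes identically, so $i\in K$. With $i\in K$, the inside condition for $j \in J$ unpacks to $\sin(\pi(\phi_i-\phi_j))<0$; as $\phi_i-\phi_j\in(-2,0)$, this isolates the subinterval $(-1,0)$ and gives $\phi_j<\phi_i+1$. The outside condition for $k\in K\setminus\{i\}$ unpacks to $\sin(\pi(\phi_i-1-\phi_k))>0$ with argument in $(-2,0)$, which isolates the subinterval $(-2,-1)$ and gives $\phi_k>\phi_i$. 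Hence $\phi_i=\min_{k\in K}\phi_k$ and $\max_{j\in J}\phi_j<\phi_i+1$, recovering the displayed inequality. For the backward direction, assuming this inequality, I would pick $i\in K$ attaining the minimum of the $\phi_k$ (perturbing $\sigma$ slightly within the open locus defined by $(\ast)$ to guarantee uniqueness if needed) and reverse the sign calculations. The corner cases $J=\emptyset$ or $K=\emptyset$ are handled by choosing $i$ to be a strict minimum in the unique non-empty set, the other half of $(\ast)$ being vacuous.

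The main technical obstacle is the careful sign analysis of $\sin(\pi\cdot)$ across the interval $(-2,0)$, where the zero at $-1$ (arising precisely from the $[-1]$ shift) cleanly separates the "inside" subinterval $(-1,0)$ from the "outside" subinterval $(-2,-1)$. A secondary subtlety is the interpretation of the degenerate diagonal "walls" $\Upsilon_{E,E}$, where $f_\sigma$ vanishes identically; under the paper's definition of a numerical wall — requiring a non-trivial solution to the phase equation — these simply do not contribute to $(\ast)$, so the argument passes through without further case analysis.
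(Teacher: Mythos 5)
Your overall strategy is the same as the paper's (compare the phases of the $Z(E_i)$, pick an extremal index, and read the inside/outside conditions off the sign of $f_\sigma$), and the computational core is fine: the identity $f_\sigma(A,B)=|Z(A)||Z(B)|\sin\bigl(\pi(\phi_A-\phi_B)\bigr)$, the sign analysis on the interval $(-2,0)$, and the reformulation of the upper-half plane condition as $\max_{j\in J}\phi_j-\min_{k\in K}\phi_k<1$ are all correct. The gap is in your treatment of the two degenerate slots, and it is load-bearing. To force $i\in K$ you invoke the $j=i$ comparison $\Upsilon_{E_i[-1],E_i[-1]}$ as a genuine constraint (it fails, hence $i\notin J$), but for $i\in K$ you quietly restrict the outside conditions to $k\in K\setminus\{i\}$, and your closing remark declares all diagonal walls non-contributing. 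These two slots are degenerate in exactly the same way: since $\ch(E_i[-1])=-\ch(E_i)$, one has $f_\sigma(E_i[-1],E_i)\equiv 0$ just as $f_\sigma(E_i[-1],E_i[-1])\equiv 0$, so there is no reading of ``inside/outside'' under which the first counts as a violated condition while the second is simply absent.

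This is not cosmetic, because excluding $i\in J$ is precisely what makes the implication $(\ast)\Rightarrow$ upper-half plane condition true. Under your stated convention (degenerate walls do not contribute), a witness $i\in J$ unpacks, by the same sine analysis, to $\phi_i<\phi_j$ for all other $j\in J$ and $\phi_i>\phi_k+1$ for all $k\in K$; nothing in your argument excludes a configuration with the $K$-phases near $\phi$ and the $J$-phases near $\phi+2$, which satisfies these inequalities (take $i$ the minimal-phase element of $J$) while the phases are spread by more than $1$, so the upper-half plane condition fails. If instead the degenerate slots are uniformly kept, then no $i$ can ever satisfy $(\ast)$ (for $i\in K$ the $k=i$ slot would demand $f_\sigma(E_i[-1],E_i)>0$), and the converse implication collapses --- indeed your own corner case $K=\emptyset$, where you choose $i\in J$, contradicts the use you made of the $j=i$ slot. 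To close the gap you must fix one convention for the self-comparisons and then rule out (or correctly handle) a witness in $J$ by an actual argument; this is exactly the point the paper's own proof glosses over by simply declaring the extremal-slope object to be the witness. A smaller issue: ``perturbing $\sigma$ slightly'' in the converse direction is not available, since the statement is an equivalence at a fixed $\sigma$; ties among the minimal phases in $K$ have to be confronted at that $\sigma$ (with the strict inequalities defining ``outside'', a tie makes $(\ast)$ fail while the upper-half plane condition can still hold), a boundary defect the paper shares but which your write-up should at least flag rather than absorb into a perturbation.
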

 
 In this proof, the notion of slope used is the $\mathbb{C}$-slope defined in Remark \ref{Ex-phiRotation} and we will assume $\phi=0$ to make the presentation clearer. To see that this assumption does not limit our conclusions, consider a new formulation of $(\ast)$: there exists $r$ such that $(Z,P)$ is inside every wall $\Upsilon_{E_r,E_i}$ for every $i \in\{0,...,n\}$. This equivalent to $(\ast)$ and it does not depend on $\phi$. This equivalence is crucial for the numerical results we are about to share because we do not need to figure out if the objects are in $\mathcal{A}^{\beta,\alpha}$ or $\mathcal{A}^{\beta,\alpha}[1]$.
 
 \begin{proof}
 Assume that $\mathcal{E}$ satisfies the upper half-plane condition for some $\psi \in [0,1)$. If $\psi=0$ this imply that $E_i \in \mathcal{P}((0,1])$ for all $i$ and we can always choose $r$ such that $\phi_{\sigma}(E_r)$ is the smallest slope within the set $\{\phi_{\sigma}(E_i)\}$. Then $(Z,\mathcal{P})$ with this $r$ satisfies condition $(\ast)$. 
 
 Assume now that $\psi \neq 0$ and that there exists $E_i \in \mathcal{P}((1,2])$, if this was not the case then we could apply the previous case. Ordering the $\mathbb{C}$-slopes of $Z(E_i)$ we can choose $r$ such that $Z(E_r)$ has a $\mathbb{C}$-slope greater than or equal to all of the other $Z(E_i)$. Therefore, $r$ must satisfy the condition $(\ast)$ in the theorem as the $Z(E_i)$ are all constrained by a upper-plane rotated $\psi\cdot\pi$-degrees. 
 
 For the reverse implication let $r$ be such that it satisfies condition $(\ast)$ for $\sigma=(Z,\mathcal{P})$. Then we can consider $\psi$ as the $\mathbb{C}$-slope of $Z(E_r[-1])$ and prove that $\mathcal{E}$ satisfies the upper half-plane condition for $\psi$. This is done by observing that we can place every exceptional object in $\mathcal{E}$ into $\mathcal{P}((\psi,\psi+1])$: either it already is in $\mathcal{P}((\psi,\psi+1])$ or in $\mathcal{P}((\psi,\psi+1])[1]$, and divide the upper half-plane $\UpH$ by the vector $Z(E_r[-1])$ making it such that the right-hand side we have the vectors $Z(E_j[-1])$ with slope less than the slope of $Z(E_r[-1])$, similarly for the vectors $Z(E_k)$ the slope is greater than the slope of $Z(E_r[-1])$. In other words, the vectors $Z(E_i)$, for all $i$, are bounded by the upper half-plane defined by $Z(E_k)$.
 \end{proof}

\begin{example}\label{Def-R1}
Let $X=\mathbb{P}^3$ and $S(\mathcal{E}_1)=\{\opn(-2)[3],\opn(-1)[2],\opn[1],\opn(1)\}$ a strip of the canonical helix of $\mathbb{P}^3$ shifted by the necessary degrees so that $S(\mathcal{E}_1)$ is a full Ext-exceptional collection. We will fix $s=1/3$, since $s>1/3$ does not change the existence of walls in $\mathbb{H}$, it just dilates the wall in the $\alpha$ direction, see \cite{JM}. The region of the points $(\beta,\alpha) \in \mathbb{H}$ where $\mathcal{E}$ is in $\langle\mathcal{A}^{\beta,\alpha},\mathcal{A}^{\beta,\alpha}[1]\rangle$ is displayed in Figure \ref{Graph-Category condition}. This is the necessary condition to apply Proposition \ref{Prop-Upper-Half-plane condition}.

\begin{figure}[htp]
    \centering
    \includegraphics[width=10cm]{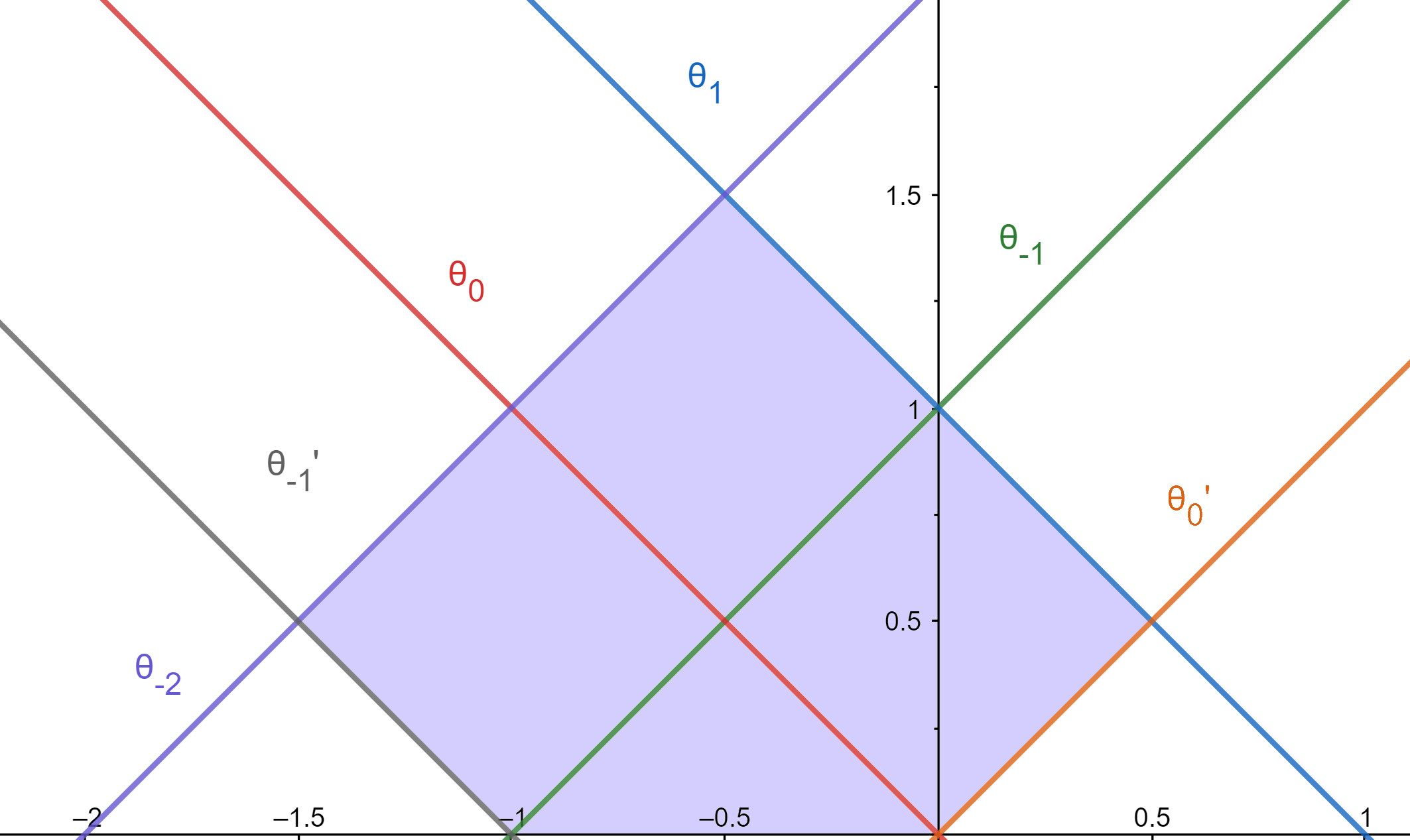}
    \caption{Region of $\mathbb{H}$ where the exceptional objects $E_{2-k}=\opn(-k)[k+1]$ satisfy $E_i \in \langle \mathcal{A}^{\beta,\alpha},\mathcal{A}^{\beta,\alpha}[1]\rangle$ and $\theta_i=\theta_{\mathcal{O}_{\mathbb{P}^3}(i)}$.}\label{Graph-Category condition}
\end{figure}        

Figure \ref{Upper-half P3} is done using the numerical condition $(\ast)$ in Proposition \ref{Prop-Upper-Half-plane condition} for the collection $S(\mathcal{E}_1)$.

\begin{figure}[htp]
    \centering
    \includegraphics[width=10cm]{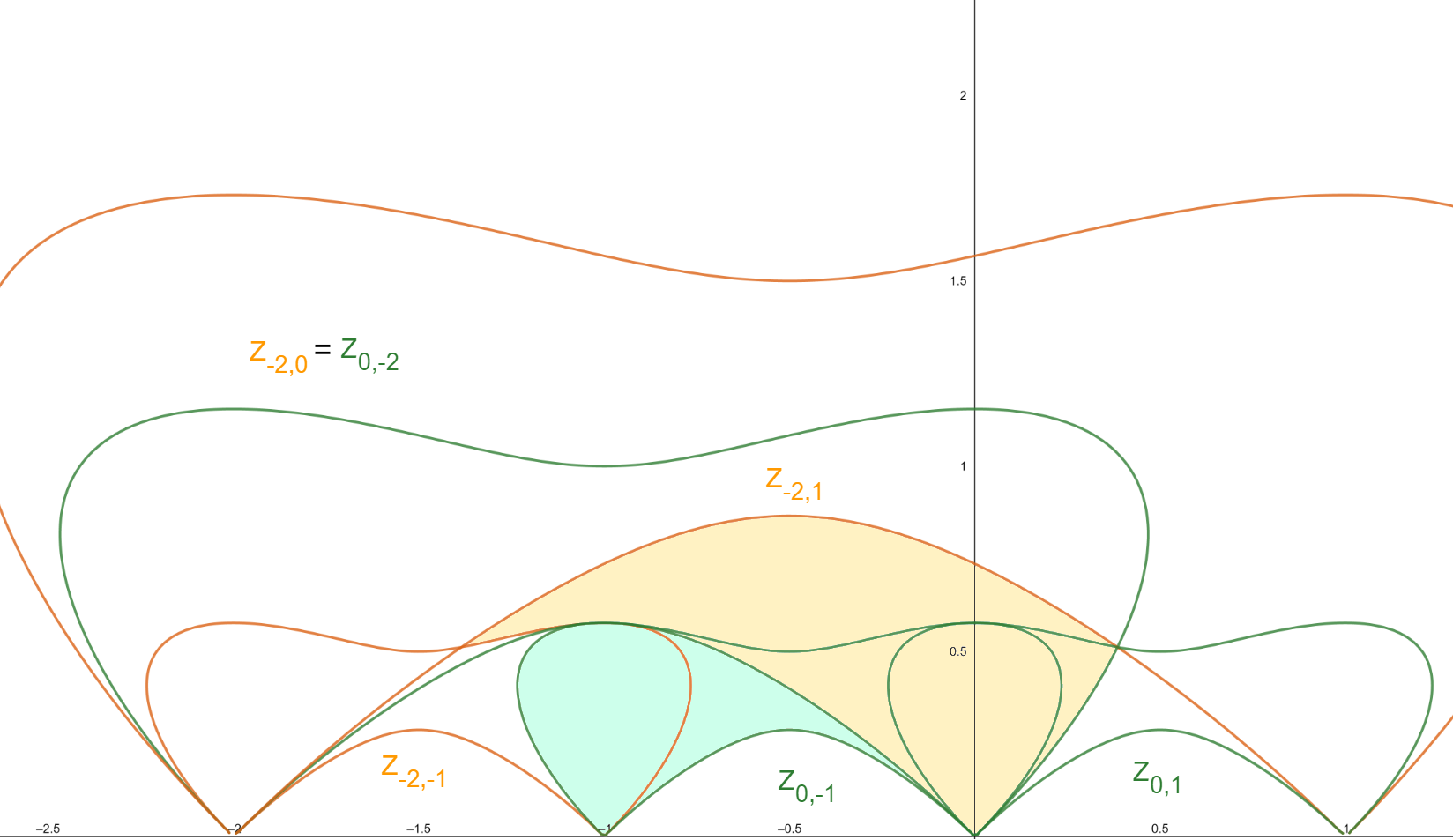}
    \caption{The wall $Z_{i,j}$ represents the $\lambda$-wall in $\Upsilon_{\opn(i)[1-i],\opn(j)[i-j]}$, with the yellow and green walls representing the application of Proposition \ref{Prop-Upper-Half-plane condition} to $i=-2$ and $i=0$, respectively. }\label{Upper-half P3}
\end{figure}

Concluding, we draw the region $R_1$ as the intersection of the regions obtained by Figures \ref{Graph-Category condition} and \ref{Upper-half P3}.
\end{example}

\begin{example}\label{Def-Quadrica}
Consider now the smooth quadric $X=Q_3$. For the exceptional objects $\mathcal{O}_{Q_3}(i)$, the distinguished curves are the same as the ones defined for $\mathcal{O}_{\mathbb{P}^3}(i)$ because their $\lambda_{\beta,\alpha,\frac{1}{3}}$ and $\nu_{\beta,\alpha}$ slopes are the same. Combining Figures \ref{Graph-Category condition} and \ref{TiltStabSpinor}, we can see a region of $\mathbb{H}$ where the exceptional collection \[S(\mathcal{E}_2)=\{S^{\ast}(-2)[3],\mathcal{O}_{Q_3}(-1)[2],\mathcal{O}_{Q_3}[1],\mathcal{O}_{Q_3}(1)\}\] is contained in $\langle \mathcal{A}^{\beta,\alpha},\mathcal{A}^{\beta,\alpha}[1]\rangle$, denote it by $P$. To apply Proposition \ref{Prop-Upper-Half-plane condition} we just need to check condition $(\ast)$ and this is done in Figure \ref{Image- Quadric R_3}.

\begin{figure}[htp]
    \centering
    \includegraphics[width=10cm]{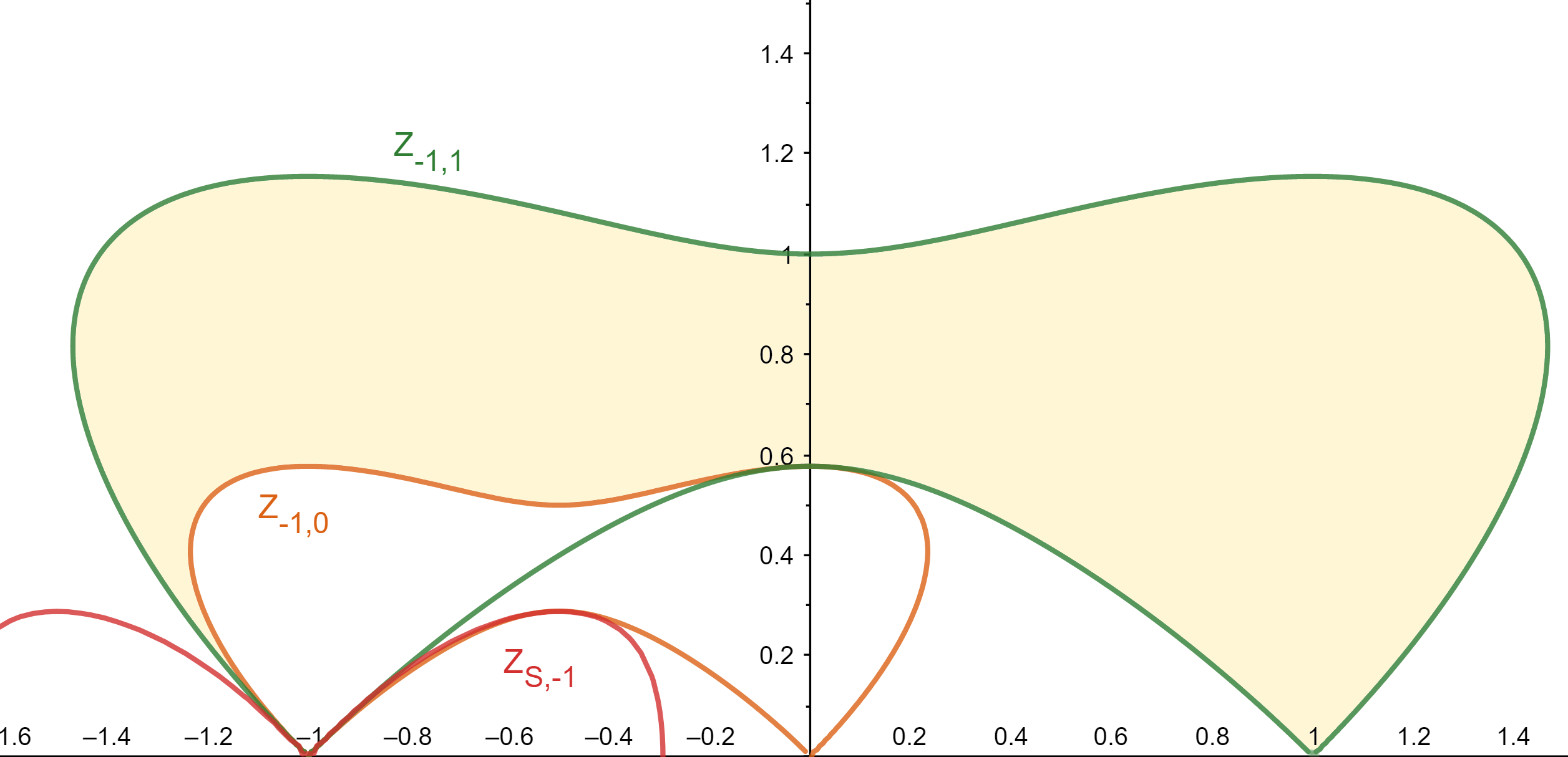}
    \caption{Same notation as the one used in Figure \ref{Upper-half P3} and \linebreak $Z_{S,-1}=\Upsilon_{S^{\ast}(-2)[3],\mathcal{O}_{Q_3}(-1)[2]}$.}\label{Image- Quadric R_3}
\end{figure}

We define the region $R_2$ as the intersection of the regions $P$ and the yellow region in Figure \ref{Image- Quadric R_3}.
\end{example}

\section{Technical Results and some Linear Algebra}\label{Sec-TecLin}

In this section we explore some technical results, which are important tools to deal with semistable objects inside the quiver region.

We keep using that $X$ is a projective smooth threefold with non-empty space of Bridgeland stability conditions and fix $\mathcal{E}=\{E_0,...,E_3\}$ to be a full strong exceptional collection in $\Db$, $\widetilde{R_{S(\mathcal{\mathcal{E}})}}$ its quiver region in $\Stab(X)$ and $R_\mathcal{E}=\widetilde{R_{S(\mathcal{\mathcal{E}})}}\cap \mathbb{H}$, the section of the geometric stability conditions with respect to some $s>0$.

We start with a reference to a correspondence between the moduli space of $\theta$-(semi)stable representations of a given quiver and the moduli space of Bridgeland (semi)stable objects. This gives the necessary background to relate Bridgeland stability and quiver stability.

\begin{theorem}\cite[Theorem 8.1]{Bert}
Let $S(\mathcal{E})$ be a full Ext-exceptional collection of a smooth projective threefold $X$, $(\beta,\alpha)$  a point in the quiver region $R_\mathcal{E}$. Then the moduli space $\mathcal{M}^{ss}_{\beta,\alpha}[a,b,c,d]$ of $\lambda_{\beta,\alpha,s}$-semistable objects with dimension vector $[a,b,c,d]$ is a projective variety. Furthermore, if we consider only the stable objects $\mathcal{M}^{s}_{\beta,\alpha}[a,b,c,d]$ then this space is a quasi-projective variety.
\end{theorem}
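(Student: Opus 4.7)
The strategy is to reduce the statement to the classical GIT construction of quiver moduli due to King. Since $(\beta,\alpha) \in R_\mathcal{E}$, the collection $\mathcal{E}=\{E_0,\dots,E_n\}$ satisfies the upper-half plane condition, so by Theorem \ref{Theorem-Macri} the extension-closed subcategory $\mathcal{C}:=\langle \mathcal{E}\rangle$ is the heart of a bounded $t$-structure inside $\mathcal{P}((\phi,\phi+1])$ and every $E_i$ is $\sigma_{\beta,\alpha,s}$-stable. Hence any $\sigma_{\beta,\alpha,s}$-semistable object $F$ with phase in $(\phi,\phi+1]$ lies in $\mathcal{C}$, and the numerical dimension vector $\dim_\mathcal{E}(F)=[a,b,c,d]$ coincides with the dimension of the underlying representation.

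The next step is to identify $\mathcal{C}$ with a category of quiver representations. The tilting object $T=\bigoplus_i E_i$ is Ext-exceptional, so $\End(T)$ is a finite-dimensional algebra of the form $kQ_\mathcal{E}/J_\mathcal{E}$ for an explicit quiver $Q_\mathcal{E}$ (with vertex $i$ corresponding to $E_i$ and $\dim_k\Hom(E_i,E_j)$ arrows) and relations $J_\mathcal{E}$. The derived functor $R\Hom(T,-)$ induces an equivalence of abelian categories $\mathcal{C}\simeq \Rep(Q_\mathcal{E},J_\mathcal{E})$ preserving dimension vectors; this is standard (see \cite{Bert, Mac}). Under this equivalence, a short exact sequence $0\to F'\to F\to F''\to 0$ in $\mathcal{C}$ corresponds to one in $\Rep(Q_\mathcal{E},J_\mathcal{E})$.

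The crucial point is to translate the Bridgeland (semi)stability $\sigma_{\beta,\alpha,s}$ into King's $\theta$-(semi)stability with the same notion of sub/quotient object. Fix a target dimension vector $v=[a,b,c,d]$ with central charge $Z(v)=\sum v_i Z(E_i)\neq 0$. Define $\theta=\theta(v,\sigma)\in\mathbb{R}^{Q_0}$ by $\theta_i := \Im\!\bigl(Z(E_i)/Z(v)\bigr)$, so that $\theta\cdot v=0$ and for any sub-dimension vector $v'$ one has $\theta\cdot v'<0$ (resp.\ $\le 0$) if and only if $\phi_\sigma$ of the subobject is strictly greater (resp.\ at least) than $\phi_\sigma(v)$. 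This reformulation matches $\sigma$-semistability inside the heart $\mathcal{C}$ with $\theta$-semistability in $\Rep(Q_\mathcal{E},J_\mathcal{E})$ with dimension $v$. The non-degeneracy condition $Z(E_i)\in\mathbb{H}_\phi$ coming from the upper-half plane condition guarantees that $\theta$ is well-defined.

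The final step invokes King's GIT construction: $\mathcal{M}^{ss}_{Q_\mathcal{E},J_\mathcal{E},\theta}(v)$ is a projective variety, obtained as $R_v /\!\!/_\theta PG_v$, and the open subvariety $\mathcal{M}^{s}_{Q_\mathcal{E},J_\mathcal{E},\theta}(v)$ of stable representations is quasi-projective. Via the equivalence of categories, these spaces parametrize exactly the same points as $\mathcal{M}^{ss}_{\beta,\alpha}[a,b,c,d]$ and $\mathcal{M}^{s}_{\beta,\alpha}[a,b,c,d]$ respectively, yielding the result. The main obstacle in making this rigorous is verifying that the equivalence $\mathcal{C}\simeq\Rep(Q_\mathcal{E},J_\mathcal{E})$ works in families, so that it produces an isomorphism of moduli functors and not merely a bijection on closed points; this is handled by noting that $T$ is a tilting object in $\mathrm{D}^b(X)$ so $R\Hom(T,-)$ gives a derived equivalence restricting to the above equivalence of hearts, and families of Bridgeland semistable objects in $\mathcal{C}$ pull back to families of quiver representations with the same $\theta$-stability.
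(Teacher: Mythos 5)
The paper itself gives no proof of this statement: it is imported directly from \cite{Bert}, so there is no internal argument to compare against, and your proposal is best read as a reconstruction of the cited proof. It does follow that route — Theorem \ref{Theorem-Macri} to identify $\langle\mathcal{E}\rangle$ with the slice $\mathcal{P}((\phi,\phi+1])$ on the quiver region, an equivalence of this heart with $\Rep(Q_\mathcal{E},J_\mathcal{E})$, conversion of the central charge into a King weight, and King's GIT theorem — and the overall logic is sound.

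Three details need repair. (1) Your phase dictionary has the signs reversed: with $\theta_i=\Im\bigl(Z(E_i)/Z(v)\bigr)$ one gets $\theta\cdot v'=\Im\bigl(Z(v')/Z(v)\bigr)$, which is strictly positive exactly when the subobject has strictly larger phase than $v$ (both phases lie in $(\phi,\phi+1]$, so the sign of the sine is the sign of the phase difference); hence $\sigma$-semistability corresponds to $\theta\cdot v'\leq 0$ for all subobjects and stability to $\theta\cdot v'<0$ for proper nonzero ones, which is what matches the convention for $\theta$-stability used in the paper. Your stated equivalence, ``$\theta\cdot v'<0$ iff the phase of the subobject is strictly greater,'' is the opposite and, taken literally, would identify Bridgeland semistability with $\theta$-instability. (2) The equivalence $\mathcal{C}\simeq\Rep(Q_\mathcal{E},J_\mathcal{E})$ is not induced by $R\Hom(T,-)$ with $T$ the direct sum of the Ext-exceptional (shifted) objects: inside $\mathcal{C}$ the $E_i$ are the simple objects, not a projective generator, and for an Ext-exceptional collection the degree-zero part of $\End(T)$ is just $\mathbb{C}^{n+1}$. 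The standard fix is to use the unshifted strong collection as a tilting bundle (or the dual collection) and identify $\mathcal{C}$ with finite-dimensional modules over the Ext-algebra of its simples; in particular the quiver has arrows only between consecutive vertices, coming from $\Hom(E_i,E_{i+1})$, with relations recording the compositions into $\Hom(E_i,E_{i+2})$, rather than $\dim\Hom(E_i,E_j)$ arrows for every pair $i<j$. (3) King's construction requires an integral (or rational) character, while your $\theta$ is real; since only finitely many sub-dimension vectors $v'\leq v$ occur (Lemma \ref{Lemma-Dimension}), you should perturb $\theta$ to a rational weight defining the same (semi)stable locus. With these corrections, together with your closing remark that the identification works in families, the argument establishes the theorem along the lines of the cited reference.
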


 In \cite{Bert}, the authors prove the theorem only for the case of exceptional collections on $\mathbb{P}^2$, but the proof is the same for any smooth projective variety with an exceptional collection with a nonempty quiver region.
    
 When either $a=b=0$, $d=c=0$ or $a=d=0$ the quiver obtained is the Kronecker quiver $K_n$ with $n=\dim\Hom(E_2,E_3)$, $\dim\Hom(E_0,E_1)$ or $\dim\Hom(E_1,E_2)$, respectively.

The next lemma determines what are the walls for a $2$-step complex and defines the determinant condition. This condition is responsible for relating Mumford's $\mu$-stability to Bridgeland stability.

\begin{lemma}\label{Lemma-Determinant Condition}
Let $K$ be an object in $\langle S(\mathcal{E})\rangle$ with dimension vector  $\ch(K)=(-1)^ia\ch(E_i)-(-1)^ib\ch(E_{i+1})$, for some $i\in\{0,1,2\}$. If there exists an actual wall with respect to $\sigma=(Z,\mathcal{P})\in \widetilde{R_\mathcal{E}}$ defined by

\begin{equation}\label{e1}
    0 \rightarrow F \rightarrow K \rightarrow G \rightarrow 0,
\end{equation}

\noindent then $\Upsilon_{F,K}=\Upsilon_{E_i,E_{i+1}}$. Furthermore, if every subobject $F$  of $K$ in $\langle S(\mathcal{E})\rangle$ with $\ch(F)=(-1)^ic\ch(E_i)-(-1)^id\ch(E_{i+1})$ satisfies $(a\cdot d-b \cdot c)(\geq) >0$ then $K$ is Bridgeland (semi)stable outside the curve determined by $\Upsilon_{E_i,E_{i+1}}$ and inside of $\widetilde{R_\mathcal{E}}$. In this case, we will say that $K$ satisfies the (semi)-determinant condition.

\end{lemma}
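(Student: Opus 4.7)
The plan is to reduce everything to a single linear-algebra identity relating the two wall functions $f_{F,K,s}$ and $f_{E_i,E_{i+1},s}$. That identity will immediately yield the first assertion, and a sign-tracking argument will deliver the second.

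First I would constrain the shape of any subobject. Since $K$ has dimension vector supported only at positions $i$ and $i+1$, Lemma \ref{Lemma-Dimension}(b) forces the dimension vector of any subobject $F \in \langle S(\mathcal{E})\rangle$ (and of the quotient $G$) to vanish outside $\{i,i+1\}$. Hence $\ch(F) = (-1)^i(c\,\ch(E_i) - d\,\ch(E_{i+1}))$ with $c,d$ non-negative integers bounded by $a$ and $b$, and the analogous formula holds for $G$. This reduction turns any potential destabilizer into a pair of non-negative integers $(c,d)$.

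Next I would compute $f_{F,K,s}$ directly. Abbreviating $\tau_j := \tau_{\beta,\alpha,s}(E_j)$ and $\rho_j := \rho_{\beta,\alpha}(E_j)$, the overall $(-1)^{2i}$ factor cancels out of $\tau(F)\rho(K) - \tau(K)\rho(F)$; the diagonal contributions $ac\,\tau_i\rho_i$ and $bd\,\tau_{i+1}\rho_{i+1}$ cancel by symmetry; and the remaining cross terms reorganize into
\[ f_{F,K,s}(\beta,\alpha) \;=\; (ad - bc)\bigl(\tau_i\rho_{i+1} - \tau_{i+1}\rho_i\bigr) \;=\; (ad - bc)\,f_{E_i,E_{i+1},s}(\beta,\alpha). \]
At an actual wall the class of $F$ cannot be a rational multiple of the class of $K$, so $ad - bc \neq 0$, and the identity then forces $\Upsilon_{F,K,s} = \Upsilon_{E_i,E_{i+1},s}$ as subsets of $\mathbb{H}$, which is the first claim.

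For the second claim the same identity does all the work. Once $ad-bc$ has a fixed sign for every subobject $F$ of $K$, the sign of $f_{F,K,s}(\beta,\alpha)$ at a point off the curve $\Upsilon_{E_i,E_{i+1},s}$ depends only on which of the two chambers cut out by that curve we are in. Combining this with the orientation convention recalled before the lemma (which ties the sign of $f$ to the ordering of the $\lambda$-slopes) one concludes that no subobject can destabilize $K$ on the prescribed chamber, and the semistable variant follows identically from the non-strict hypothesis. The assumption $\sigma \in \widetilde{R_\mathcal{E}}$ is used only to guarantee that $\langle S(\mathcal{E})\rangle$ is the ambient heart, so that the reduction of the first step is available. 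The main obstacle I expect is not the calculation but the subordinate bookkeeping question of which pairs $(c,d)$ genuinely occur as Chern characters of subobjects of $K$ in $\langle S(\mathcal{E})\rangle$, since Lemma \ref{Lemma-Dimension} only gives a necessary condition; this is harmless for the present (semi)stability statement because the hypothesis is imposed uniformly on all admissible pairs, but it would become a real issue if one tried to prove a sharp non-stability result in the opposite direction.
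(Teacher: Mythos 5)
Your proposal is correct and follows essentially the same route as the paper: use the quiver-region hypothesis plus Lemma \ref{Lemma-Dimension} to force any destabilizer to have dimension vector supported at $\{i,i+1\}$, then exploit linearity of $Z$ to reduce the wall equation to $(a\cdot d-b\cdot c)f_{E_i,E_{i+1}}(\beta,\alpha)=0$, with the sign of the determinant giving the orientation and hence the (semi)stability statement off the curve $\Upsilon_{E_i,E_{i+1}}$.
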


\begin{proof}

Assume that we have the actual wall determined by sequence (\ref{e1}) so that $F,K,G$ are all objects of $\mathcal{P}(\phi)$,  and by our hypothesis $K \in \langle S(\mathcal{E})\rangle$, therefore all of them are in $\langle S(\mathcal{E})\rangle$. Furthermore, since $\langle S(\mathcal{E}) \rangle = \mathcal{P}((\psi,\psi+1])$, we can use their $\mathbb{C}$-slope to calculate their stability with respect to $\sigma=(Z,\mathcal{P})$. This also implies the restrictions to the dimension vector of the subobject $F$ by Lemma \ref{Lemma-Dimension}.

Now, in order to prove the uniqueness and establish the orientation of the actual wall for $K$ we have to consider the defining equation of the actual wall and use the fact that the stability function is a group homomorphism $Z$ in $\Lambda$. The defining equation for the wall $\Upsilon_{F,K}$ is

\begin{equation*}
    f_\sigma(F,K)=0,
\end{equation*}
which is the same as
\begin{equation*}
   \frac{ c \cdot(-\Real(Z(E_{i})))-d\cdot(-\Real(Z(E_{i+1})))}{c\cdot\Imag(Z(E_{i}))-d\cdot\Imag(Z(E_{i+1}))}=\frac{a\cdot(-\Real(Z(E_{i})))-b\cdot(-\Real(Z(E_{i+1})))}{a\cdot\Imag(Z(E_{i}) )-b\cdot\Imag(Z(E_{i+1}))}
\end{equation*}
and this equation is the equivalent to
\begin{equation}
    (a\cdot d-b\cdot c)f_{\sigma}(E_i,E_{i+1})=0.
\end{equation}

The positivity (negativity) of the determinant $(a\cdot d-b \cdot c)$ provides the orientation of the wall $\Upsilon_{F,K}$, whether it will make $\arg Z(F)>\arg Z(K)$ or $\arg Z(F)<\arg(K)$.
\end{proof}

\begin{corollary}
Inside a quiver region $R_\mathcal{E}$ with respect to a strong exceptional collection $\mathcal{E}$, there exists at most one numerical $\lambda$-wall for a given $2$-step complex object $K \in \langle S(\mathcal{E})\rangle$.
\end{corollary}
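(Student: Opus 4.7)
The plan is to directly lift the algebraic manipulation already carried out in the proof of Lemma \ref{Lemma-Determinant Condition} from the actual-wall setting to the purely numerical one. The crucial observation is that that calculation only used additivity of the central charge $Z_{\beta,\alpha,s}$ on $\Knum(X)$ together with the two-term shape of $\dim_{\mathcal{E}}(K)$; it never invoked the existence of a genuine destabilizing short exact sequence, so it should apply word-for-word to any class in the heart satisfying the appropriate numerical constraint.

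First I would fix a candidate destabilizing class $[F]\in\Knum(X)$ representing a subobject of $K$ in the heart $\mathcal{C}=\langle S(\mathcal{E})\rangle$. Because $R_\mathcal{E}$ is a quiver region, $\mathcal{C}$ is a genuine heart of a bounded $t$-structure, so Lemma \ref{Lemma-Dimension}(b) applies and forces $\dim_{\mathcal{E}}(F)\le\dim_{\mathcal{E}}(K)$ componentwise. Since $\dim_{\mathcal{E}}(K)$ has by hypothesis only two nonzero entries, in positions $i$ and $i+1$, any such $F$ has
\[ \ch(F)=(-1)^i c\,\ch(E_i)-(-1)^i d\,\ch(E_{i+1}),\qquad 0\le c\le a,\ 0\le d\le b, \]
so $F$ is itself a two-step object supported on the same pair of indices. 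Next I would recycle the bilinear manipulation in the proof of Lemma \ref{Lemma-Determinant Condition}: the numerical wall equation $\phi_\sigma(F)=\phi_\sigma(K)$ collapses to
\[ (ad-bc)\,f_{E_i,E_{i+1},s}(\beta,\alpha)=0. \]
If $ad-bc=0$ then $[F]$ and $[K]$ are proportional and no nontrivial wall is produced; otherwise the locus is precisely $\Upsilon_{E_i,E_{i+1},s}$, which gives the claimed uniqueness.

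No serious obstacle arises, since the arithmetic is already in place in Lemma \ref{Lemma-Determinant Condition} and the remainder is essentially book-keeping. The only conceptual point worth emphasizing is that Lemma \ref{Lemma-Dimension}(b) is indispensable: without it one could a priori entertain numerical walls coming from classes supported on other exceptional summands $E_j$ with $j\notin\{i,i+1\}$, and these would in general cut out curves in $\mathbb{H}$ different from $\Upsilon_{E_i,E_{i+1},s}$, destroying uniqueness.
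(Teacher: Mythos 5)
Your proposal is correct and is essentially the argument the paper intends: the corollary is an immediate consequence of the computation in Lemma \ref{Lemma-Determinant Condition}, where Lemma \ref{Lemma-Dimension} pins any subobject of $K$ inside $\langle S(\mathcal{E})\rangle$ to the same two exceptional summands and the wall equation then collapses to $(ad-bc)\,f_{E_i,E_{i+1}}(\beta,\alpha)=0$, so every nontrivial numerical wall coincides with $\Upsilon_{E_i,E_{i+1}}$. Your added remark that the proportional case $ad-bc=0$ yields no wall matches the paper's treatment as well.
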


Since we are mostly working with $3$-step complexes, it will be useful to generalize the calculations done in Lemma \ref{Lemma-Determinant Condition}. This now calculates the $\theta$ for relating the $\theta$-stability of representations of the Beilinson quiver to Bridgeland stability, in the $\mathbb{P}^3$ and $Q_3$ case.

\begin{lemma}\label{Lemma-Determinant-Calculation}
  For any $3$-step complex $L \in \Shi$ with dimension vector \[\ch(L)=-(-1)^ia\ch(E_{i-1})+(-1)^ib\ch(E_i)-(-1)^ic\ch(E_{i+1}),\] for $i=\{1,2\}$, the numerical walls of a stability $\sigma=(Z,\mathcal{P})\in R_\mathcal{E}$ for the object $L$ and $F\in\Shi$ have to go through the points where $f_\sigma(E_i,E_{i+1})=f_\sigma(E_i,E_{i-1})=0$, if it exists.
\end{lemma}
\begin{proof}
The proof is just an observation that for any $F \in \Shi$ subobject of $L$ with $\ch(F)=-(-1)^ia'\ch(E_{i-1})+(-1)^ib'\ch(E_i)-(-1)^ic'\ch(E_{i+1})$, we have a linear description of the defining equation for the $\lambda$-wall as
\begin{equation}
f_\sigma(F,L) =\begin{vmatrix}
f_\sigma(E_{2},E_{1}) &f_\sigma(E_{2},E_{0}) &f_\sigma(E_1,E_0)\\
a &b &c\\
a' &b' &c'
\end{vmatrix}
\end{equation}
Therefore if $\sigma=(Z,\mathcal{P})$ satisfies $f_\sigma(E_i,E_{i+1})=f_\sigma(E_i,E_{i-1})=0$ then $f_\sigma(F,L)=0$. 
\end{proof}

\begin{rmk}\label{rmk-lambwall}
When clear, in the threefold case for a fixed $s>0$, $L\in \KnumX$, we will use the notation $\Upsilon_i$ for the $\lambda$-walls defined by $\{(\beta,\alpha)\in \mathbb{H}| f_{\sigma_{\beta,\alpha,s}}(L,E_i[3-i])=0\}$ respectively for every $i=1,2,3$.
\end{rmk}

\begin{proposition}\label{Prop-Mustability}
Suppose that $\{E_0,...,E_3\}$ is a strong exceptional collection of $\mu$-stable sheaves. For a coherent sheaf $K$ with an exact resolution \[0 \rightarrow E_{i}^{\oplus a} \rightarrow E_{i+1}^{\oplus b} \rightarrow K \rightarrow 0\] for some $i\in\{0,...,2\}$ to satisfy the (semi) determinant condition it is sufficient that $K$ is a $\mu$-(semi)stable sheaf.
\end{proposition}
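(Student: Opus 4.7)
The plan is to translate the determinant condition, a statement about subobjects in the heart $\langle S(\mathcal{E})\rangle$, into an honest subsheaf condition in $\Coh$ where $\mu$-stability of $K$ applies directly. A subobject $F \hookrightarrow K$ of Kronecker dimension $(c,d)$ corresponds to a subrepresentation $(V',W')$ of the Kronecker representation $(V = \CC^a,\, W = \CC^b,\, \rho)$ encoding the differential $\rho \colon V \otimes E_i \hookrightarrow W \otimes E_{i+1}$ in the resolution of $K$. Since this differential is injective as a sheaf map, so is its restriction $V' \otimes E_i \to W' \otimes E_{i+1}$, and hence $F$ is already a sheaf, realised as the cokernel of that restriction.

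Next I would \emph{saturate} by replacing $V'$ with $V'_{\mathrm{sat}} := \rho^{-1}(W' \otimes \Hom(E_i,E_{i+1})) \supseteq V'$, obtaining a subrepresentation $(V'_{\mathrm{sat}},W')$ with $c_{\mathrm{sat}} := \dim V'_{\mathrm{sat}} \geq c$. Because $ad - bc \geq ad - bc_{\mathrm{sat}}$, it suffices to prove the inequality for the saturated datum. The gain is that $F_{\mathrm{sat}} := W' \otimes E_{i+1}/V'_{\mathrm{sat}} \otimes E_i$ embeds as a genuine subsheaf of $K$: the kernel of the natural morphism $F_{\mathrm{sat}} \to K$ is $(W' \otimes E_{i+1} \cap V \otimes E_i)/V'_{\mathrm{sat}} \otimes E_i$, which vanishes by the maximality built into $V'_{\mathrm{sat}}$.

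With $F_{\mathrm{sat}} \subset K$ an honest subsheaf, $\mu$-(semi)stability of $K$ yields $\mu(F_{\mathrm{sat}}) (\leq) < \mu(K)$, and a direct Chern-character computation gives
\[
\rk(F_{\mathrm{sat}}) \cdot c_1(K) - c_1(F_{\mathrm{sat}}) \cdot \rk(K) = (r_i \delta_{i+1} - r_{i+1}\delta_i)(ad - bc_{\mathrm{sat}}),
\]
where $r_j := \rk(E_j)$ and $\delta_j := c_1(E_j)$. The slope inequality thus converts into the desired determinant inequality as soon as $r_i \delta_{i+1} - r_{i+1}\delta_i > 0$, i.e.\ $\mu(E_i) < \mu(E_{i+1})$. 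This positivity is automatic in our setting: the nontrivial injective map $V \otimes E_i \hookrightarrow W \otimes E_{i+1}$ supplies a nonzero element of $\Hom(E_i,E_{i+1})$, and two distinct $\mu$-stable sheaves admitting a nonzero morphism must obey the strict slope inequality.

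The main obstacle is confirming that $F_{\mathrm{sat}}$ is a proper, nonzero subsheaf of $K$ so that $\mu$-stability can be applied in the intended form. The vanishing $F_{\mathrm{sat}} = 0$ is excluded for nontrivial subrepresentations, because $E_i \not\cong E_{i+1}$ prevents the injection $V'_{\mathrm{sat}} \otimes E_i \hookrightarrow W' \otimes E_{i+1}$ from being surjective. The degeneracy $c_{\mathrm{sat}} = a$ with $d < b$ would realise $E_{i+1}^{\oplus(b-d)}$ as a quotient sheaf of $K$ while a direct slope computation on the resolution gives $\mu(K) > \mu(E_{i+1})$, contradicting $\mu$-stability, so this case does not occur. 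The remaining degenerate case $d = b$ with $c < a$ gives $ad - bc = b(a-c) > 0$ with no further input, and replacing strict inequalities by weak ones throughout handles the semistable statement.
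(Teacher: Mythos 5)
The saturation step is where your argument breaks. Taking $V'_{\mathrm{sat}}$ to be the largest subspace with $(V'_{\mathrm{sat}},W')$ a subrepresentation only guarantees that no nonzero vector of $V/V'_{\mathrm{sat}}$ is killed by \emph{all} the induced linear maps $\bar\rho_l\colon V/V'_{\mathrm{sat}}\to W/W'$; it does not make $F_{\mathrm{sat}}\to K$ injective. By the snake lemma the kernel of $F_{\mathrm{sat}}\to K$ is the \emph{sheaf} kernel of $(V/V'_{\mathrm{sat}})\otimes E_i\to (W/W')\otimes E_{i+1}$, and a map of this shape can have nonzero kernel even when it has no ``constant'' kernel vector, because local sections of $\rho(V\otimes E_i)$ can land in $W'\otimes E_{i+1}$ through non-constant combinations. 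Concretely, for $E_i=\mathcal{O}_{\mathbb{P}^3}(-1)$, $E_{i+1}=\mathcal{O}_{\mathbb{P}^3}$, $V/V'_{\mathrm{sat}}=\CC^2$, $W/W'=\CC$ and induced map $(y,z)\colon \mathcal{O}(-1)^{\oplus 2}\to\mathcal{O}$, no constant vector is killed but the sheaf kernel is a rank-one syzygy sheaf; so $W'\otimes E_{i+1}\cap\rho(V\otimes E_i)$ is strictly larger than $\rho(V'_{\mathrm{sat}}\otimes E_i)$ and $F_{\mathrm{sat}}$ is \emph{not} a subsheaf of $K$. Such subrepresentations (with $\dim V/V'_{\mathrm{sat}}\geq 2$, $\dim W/W'=1$) occur already for the instanton-type complexes in the paper, so this is a genuine gap rather than an edge case.

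The missing idea --- and the paper's route --- is to control that kernel by its slope instead of trying to kill it. For any subobject $F$ (no saturation needed), the kernel $T$ of $F\to K$ embeds in $(V/V')\otimes E_i\cong E_i^{\oplus(a-c)}$, so $\mu(T)\leq\mu(E_i)$, while $F$ is a quotient of $E_{i+1}^{\oplus d}$, so $\mu(F)\geq\mu(E_{i+1})>\mu(E_i)$ (your own Hom-between-stable-sheaves observation; Remark \ref{Mu-Inequality} in the paper). The seesaw inequality then gives $\mu(F/T)\geq\mu(F)$ for the honest subsheaf $F/T\subseteq K$, so $\mu$-(semi)stability of $K$ still yields $\mu(F)(\leq)<\mu(K)$, and your Chern-character identity converts this into $(ad-bc)(r_i\delta_{i+1}-r_{i+1}\delta_i)(\geq)>0$, i.e.\ the (semi)determinant condition. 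Your remaining ingredients --- identifying subobjects with subrepresentations (hence $F$ a sheaf), the computation of $\rk(F)c_1(K)-c_1(F)\rk(K)$, the positivity $\mu(E_i)<\mu(E_{i+1})$, and the degenerate cases --- are sound and essentially coincide with the paper's diagram-chase proof.
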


\begin{proof}
By the definition of $\langle S(\mathcal{E})\rangle$, it is clear that $K[n-i-1] \in \langle S(\mathcal{E})\rangle$ and suppose that $F[n-i-1]$ is a subobject of $K[n-i-1]$ in $\langle S(\mathcal{E})\rangle$. Using Lemma \ref{Lemma-Dimension} we know that $F[n-i-1]$ is quasi-isomorphic to $(E_{i}^{\oplus c} \rightarrow E_{i+1}^{\oplus d})[n-i-1]$ and from applying the cohomology functor $\mathcal{H}^{0}_{\Coh}$ to \begin{equation}\label{Eq-Det-mu}
     0 \rightarrow F[n-i-1] \rightarrow K[n-i-1] \rightarrow L[n-i-1] \rightarrow 0
\end{equation}
we conclude that $F[n-i-1]$ is a sheaf shifted by $[n-i-1]$. Therefore, we can view the exact sequence \eqref{Eq-Det-mu} as the following exact diagram in $\Coh$:

\centerline{\xymatrix{
&&&\mathcal{H}^{-1}(L) \ar@{^{(}->}[d]\\
 0 \ar[r] & E_i^{\oplus c} \ar[r] \ar@{^{(}->}[d] &E_{i+1}^{\oplus d} \ar[r] \ar@{^{(}->}[d] &F \ar[r] \ar[d] &0\\
 0 \ar[r] & E_i^{\oplus a} \ar[r] \ar@{->>}[d] &E_{i+1}^{\oplus b} \ar[r] \ar@{->>}[d] &K \ar[r] \ar@{->>}[d] &0\\
 \mathcal{H}^{-1}(L) \ar@{^{(}->}[r] &E_i^{\oplus a-c} \ar[r] &E_{i+1}^{\oplus b-d} \ar[r] &\mathcal{H}^0(L) \ar[r] &0
}}
We have that $\mu(\mathcal{H}^{-1}(L))\leq \mu(E_i)$ and $\mu(F)\geq \mu(E_{i+1})>\mu(E_i)$, the latter inequality is a consequence of Remark \ref{Mu-Inequality}. Hence the condition $\mu(F)<\mu(K)$ is equivalent to $(a\cdot d - c \cdot d)(\ch_0(E_i)\ch_1(E_{i+1})-\ch_1(E_i)\ch_0(E_{i+1}))>0$, that is, if the determinant condition is satisfied.
\end{proof}

\begin{rmk}Analogously, one can prove the same result for sheaves that are a kernel of a surjective morphism $E_{i}^{\oplus a} \rightarrow E_{i+1}^{\oplus b}$. We will use both versions of the previous proposition in the following sections.
\end{rmk}

Next, we find a numerical criterion to find regions where every $3$-step complex of dimension $[0,a,b,c]$ is in $\mathcal{A}^{\beta,\alpha}$. We will need this to prove the uniqueness of the $\lambda$-walls for the instantons. 

\begin{lemma}\label{Lemma-Existence}
 If $L\in \langle S(\mathcal{E) \rangle}$ is an object with $\dim_\mathcal{E}(L)=[0,a_1,a_2,a_3]$, then $L \in \mathcal{A}^{\beta,\alpha}$ when $(\beta,\alpha) \in \bar{R}_{[0,a_1,a_2,a_3]}\cap R_\mathcal{E}$, where \[\bar{R}_{[0,a_1,a_2,a_3]}=\{(\beta,\alpha)\in\mathbb{H}| E_i[3-i]\in \mathcal{A}^{\beta,\alpha}\text{ for every $i$ with $a_i\neq 0$}\}.\]
\end{lemma}

\begin{proof}
This Lemma is a consequence of $\mathcal{A}^{\beta,\alpha}$ being closed under extensions and that we can decompose $L$ using the truncation into the following, possibly trivial, exact sequences:
\[0 \rightarrow E_3^{\oplus a_3} \rightarrow L \rightarrow \tau_{\leq -1}L \rightarrow 0\] \[0 \rightarrow E_2^{\oplus a_2}[1] \rightarrow \tau_{\leq -1}L \rightarrow \tau_{\leq -2}L \rightarrow 0\] \[0 \rightarrow E_1^{\oplus a_1}[2] \rightarrow \tau_{\leq -2}L \rightarrow E_0^{\oplus a_0} \rightarrow 0.\]
\end{proof}

When the dimension vector is $[a_0,a_1,a_2,0]$ with $a_0 \neq 0$, we need to consider a slight modification of the previous lemma to take into account when the exceptional objects are in $\mathcal{A}^{\beta,\alpha}[1]$. The proof is essentially the same and therefore we will not include one.

\begin{lemma}\label{Lemma-Existence-2}
 If $L\in \langle S(\mathcal{E) \rangle}$ is an object with $dim_\mathcal{E}(L)=[a_0,a_1,a_2,0]$, then $L \in \mathcal{A}^{\beta,\alpha}$ when $(\beta,\alpha) \in \bar{R}_{[a_0,a_1,a_2,0]}\cap R_\mathcal{E}$, where \[\bar{R}_{[a_0,a_1,a_2,0]}=\{(\beta,\alpha)\in \mathbb{H}| E_i[3-i-1]\in \mathcal{A}^{\beta,\alpha}\text{ for every $i$ with $a_i\neq 0$}\}.\]
\end{lemma}

\begin{rmk}
For a given strong exceptional collection $\mathcal{E}$ with quiver region $R_\mathcal{E}$, denote the region $\bar{R}_{[0,a,b,c]}\cap R_\mathcal{E}$ obtained by Lemma \ref{Lemma-Existence} with $a,b,c \neq 0$, by $\bar{R}_\mathcal{E}$. The intersection of the canonical walls $\Upsilon_i$ with $\bar{R}_\mathcal{E}$ will be denoted by $\bar{\Upsilon}_i$. When clear from the context we may suppress the subscript $\mathcal{E}$.
\end{rmk}

\begin{example}
One example we will use throughout the paper is that, for $X=\mathbb{P}^3$ or $X=Q_3$, any object with dimension $[0,a,b,c]$ with respect to the exceptional collections in Example \ref{Def-Quadrica} and \ref{Def-R1} is in $\mathcal{A}^{\beta,\alpha}$ for $(\beta,\alpha)$ in the intersection of the quiver regions obtained in the aforementioned examples and the region displayed in Figure \ref{Image- Exemplo}.

\begin{figure}[htp]
    \centering
    \includegraphics[width=10cm]{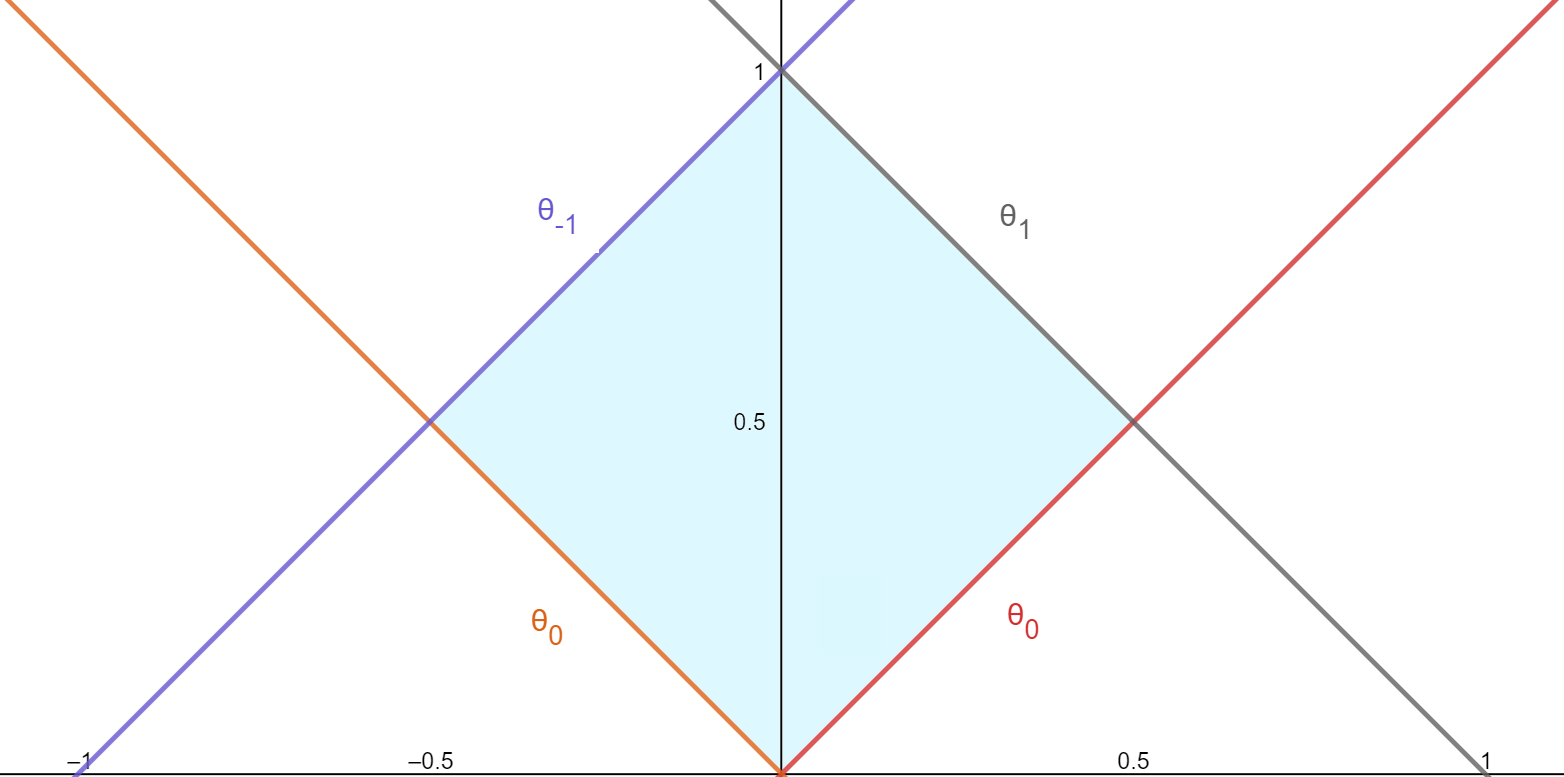}
    \caption{Region $\bar{R}_{[0,a_1,a_2,a_3]}$ obtained by Lemma \ref{Lemma-Existence}, for $\mathcal{E}=\{E_0,\mathcal{O}(-1),\mathcal{O},\mathcal{O}(1)\}$ a full strong exceptional collection in $X=\mathbb{P}^3$ or $X=Q_3$ with $E_0=\mathcal{O}(-2)$ or $E_0=S^{\ast}(-2)$, respectively .}\label{Image- Exemplo}.
\end{figure}

\end{example}

\begin{rmk}\label{Vanishing Wall}
Given an object $F$ in $\langle S(\mathcal{E}) \rangle$  with $\dim(F)=[0,a,b,c]$, the $\lambda$-walls obtained by the truncation functors $\tau_{\leq -1}$ and $\tau_{>-2}$ are vanishing walls because every $3$-step complex can be decomposed in these natural exact sequences \eqref{Truncation Functor}.
\end{rmk}

In the case of concrete examples of linear complexes it is possible to calculate all of its linear subcomplexes. We start by an observation of the well established natural isomorphism for $F,E \in \Db$ and $V,W$ vector spaces \[ \Hom(V \otimes F, W \otimes E)= \Hom(V,W) \otimes \Hom(F,E). \] So that when we fix a base $\{\gamma_0,...,\gamma_k \}$ for the vector space $\Hom(F,E)$ we can describe $\phi \in \Hom(V \otimes F, W \otimes E)$ as  \[ \phi= \Sigma_{i} \gamma_i \phi_i, \] where $\phi_i \in \Hom(V,W)$ are linear transformations. Define $J_\phi^I=\bigoplus_{l=0}^{k}Im(\phi_l|_I)$, where $I$ is a subspace of $V$ and $Im(\phi_l|_I)$ is image of the linear transformation $\phi_l$ restricted to the subspace $I$.

\begin{proposition}\label{Prop-Linear}
Let $\mathcal{E}=\{E_0,...,E_3\}$ be a strong exceptional collection of  sheaves, $S(\mathcal{E})$ its shift. Let \[ K \simeq (V \otimes E_i \overset{T}{\rightarrow} W \otimes E_{i+1})[i]\] be a $2$-step complex in $\langle S(\mathcal{E}) \rangle$. For any subspace $I \hookrightarrow V$, the subcomplexes of $K$ of the form $I \otimes E_i \overset{S}{\rightarrow} J \otimes E_{i+1}$ satisfies $J_T^I \subset J$. Furthermore, $I \otimes E_i \overset{{T|_I}}{\rightarrow} J_T^I\otimes E_j$ is a subobject of $K$ in $\Shi$. 
\end{proposition}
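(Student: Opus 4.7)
The plan is to exploit the natural isomorphism
$\Hom(V\otimes E_i, W\otimes E_{i+1}) \cong \Hom(V,W)\otimes \Hom(E_i,E_{i+1})$
to convert the containment question about sheaf images into a purely linear-algebraic statement. First I would reformulate what it means for $I\otimes E_i \overset{S}{\to} J\otimes E_{i+1}$ to be a subcomplex of $K$: the inclusions $I\hookrightarrow V$ and $J\hookrightarrow W$ must fit in a commutative square with $T$ and $S$, which is equivalent to saying that the composition
\[ I\otimes E_i \hookrightarrow V\otimes E_i \xrightarrow{T} W\otimes E_{i+1} \twoheadrightarrow (W/J)\otimes E_{i+1} \]
vanishes.

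Next I would decompose $T = \sum_{l=0}^{k} T_l \otimes \gamma_l$ with $T_l \in \Hom(V,W)$, and rewrite the above composition as $\sum_l \bigl(\pi_{W/J}\circ T_l|_I\bigr)\otimes \gamma_l$, where $\pi_{W/J}:W\to W/J$ is the projection. Because $\{\gamma_0,\dots,\gamma_l\}$ is a basis of $\Hom(E_i,E_{i+1})$, a sum of the form $\sum_l A_l \otimes \gamma_l$ vanishes in $\Hom(I, W/J)\otimes \Hom(E_i,E_{i+1})$ if and only if each $A_l = 0$. Thus the composition is zero if and only if $T_l(I)\subset J$ for every $l$, equivalently $J_T^I = \sum_l \operatorname{Im}(T_l|_I)\subset J$, proving the first assertion.

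For the second assertion, by construction the map $T$ sends $I\otimes E_i$ into $J_T^I\otimes E_{i+1}$, so the two-step complex $I\otimes E_i \xrightarrow{T|_I} J_T^I \otimes E_{i+1}$ is a well-defined subcomplex of $K$. It remains to check that the inclusion is a monomorphism in the abelian category $\Shione$. For this I would compute the mapping cone in $D^b(X)$: the termwise short exact sequences of vector spaces $0\to I \to V\to V/I\to 0$ and $0\to J_T^I \to W \to W/J_T^I \to 0$ induce a distinguished triangle whose third vertex is represented by the two-step complex $(V/I)\otimes E_i \xrightarrow{\bar T} (W/J_T^I)\otimes E_{i+1}$, where $\bar T$ is the map induced by $T$ (well-defined precisely because $T_l(I)\subset J_T^I$). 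Since this cone is itself a two-step complex built from consecutive terms of the shifted exceptional collection $S(\mathcal{E})$, it lies in the heart $\Shione$, and hence the inclusion fits into a short exact sequence in $\Shione$, which identifies $I\otimes E_i \xrightarrow{T|_I} J_T^I\otimes E_{i+1}$ as a subobject of $K$.

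The only delicate point I anticipate is the clean bookkeeping in the last step — verifying that the cone really is concentrated in the single cohomological degree that places it in the heart rather than spread across two — but this is immediate from the fact that the vertical inclusions $I\hookrightarrow V$ and $J_T^I\hookrightarrow W$ are injective on each factor, so the cone has no cohomology outside the degree where $E_i$ and $E_{i+1}$ already sit in $S(\mathcal{E})$.
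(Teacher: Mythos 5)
Your argument is correct and follows essentially the same route as the paper: decompose $T=\sum_l T_l\otimes\gamma_l$ via the natural isomorphism $\Hom(V\otimes E_i,W\otimes E_{i+1})\cong\Hom(V,W)\otimes\Hom(E_i,E_{i+1})$, use linear independence of the $\gamma_l$ to force $T_l(I)\subset J$ for every $l$, and observe that by definition of $J_T^I$ the restriction $T|_I$ lands in $J_T^I\otimes E_{i+1}$, giving the commutative square. The only addition is your cone/termwise-quotient verification that the subcomplex is a genuine subobject in $\langle S(\mathcal{E})\rangle$, a point the paper leaves implicit, and that check is sound since the quotient complex is again a two-step complex of consecutive shifted exceptional objects and hence lies in the heart.
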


\begin{proof}
As previously observed, given $K \simeq (V \otimes E_i \overset{T}{\rightarrow} W \otimes E_{i+1})$ and a base $\{\gamma_0,...,\gamma_l\}$ for $\Hom(E_i,E_{i+1})$ then $T$ can be factored into $\Sigma_j T_j \otimes \gamma_j$ so that composing with $I\otimes E_i \hookrightarrow V \otimes E_i$, the morphism induced by the inclusion $I \hookrightarrow V$, we obtain a natural commutative diagram\\
\centerline{\xymatrix{
I\otimes E_i \ar[r]^{T|_I} \ar@{^{(}->}[d] &J_T^I\otimes E_{i+1} \ar@{^{(}->}[d]\\
V \otimes E_i \ar[r]^T &W \otimes E_{i+1}.
}}
The map $T|_I$ has a well-defined image in $J_T^I\otimes E_{i+1}$ due to the property that every $T_l|_I$ has its image inside $J_T^I$. The same argument proves that every sub-object of $K$ with $I\otimes E_i$ as its $E_i$ coordinate has to factorize.
\end{proof}

\section{Instanton Sheaves}\label{Sec-Ins}

We are ready to apply the quiver regions to study the stability of the \emph{instanton sheaves} of rank $2$ and rank $0$ over some Fano threefolds. These are really important objects inside the moduli space of Gieseker-semistable sheaves, being linear sheaves and for their nice cohomological properties. Another reason to apply the methods described in the previous section is that instantons can be defined as cohomology of linear monads of exceptional objects, see \cite{JMT,Faen,Kuz,Cos}. 

For a given smooth projective Fano threefold $X$ of Picard rank one and index $i_X=2q_X+r_X$, with $r_X$ the remainder of the division of $i_X$ by $2$, an instanton bundle of charge $c$ is a rank $2$ vector bundle $E$ with $c_2(E)=c L_X$ satisfying the conditions:
\begin{center}
    $E(-q_X) \simeq E^\ast(q_X) \otimes \omega_X$ and $H^1(X,E(-q_X))=0$.
\end{center}

This is the definition given in \cite{Faen}, it agrees with the definition given in \cite{Kuz} and with the usual notion of instanton bundle on $\mathbb{P}^3$. It does not agree with the instanton as proposed in \cite{Cos}, as the instanton obtained in \cite{Faen} for the smooth quadric is an odd instanton ($c_1=-1$) and the one obtained in \cite{Cos} is an even instanton ($c_1=0$). To define the instanton sheaf $E$ in $\mathbb{P}^3$ we use \cite{JMT} definition of a torsion free sheaf satisfying $h^0(E(-1))=h^1(E(-2))=h^2(E(-2))=h^3(E(-3))=0$. 

All of these objects and definitions can be realized as cohomology of a monad defined by exceptional objects. Recall that a monad is a complex of coherent sheaves \[ 0 \rightarrow A \rightarrow B \rightarrow C \rightarrow 0\] such that $A\rightarrow B$ and $B\rightarrow C$ is a monomorphism and an epimorphism, respectively. 

Let $X$ be either $Q_3$ or $\mathbb{P}^3$, the even rank $2$ instantons (bundles in $Q_3$ and sheaves in $\mathbb{P}^3$) are defined as cohomology of a monad of the form 

\begin{equation}\label{Instaton-Defini-Monad}
    0 \rightarrow \mathcal{O}_X(-1)^{\oplus c} \overset{f}{\rightarrow} \mathcal{O}_X^{\oplus 2c+2} \overset{g}{\rightarrow} \mathcal{O}_X(1)^{\oplus c} \rightarrow 0.
\end{equation}

The positive integer $c$ in the monad is the called charge of the instanton $I=\ker(g)/\im(f)$. For the case $X=\mathbb{P}^3$ the sheaf describing the singularities of an instanton sheaf is also a kind of instanton, so-called the rank $0$ instanton $Q$, and it is defined by the exact resolution 

\begin{equation}
    0 \rightarrow \mathcal{O}(-1)^{\oplus d} \rightarrow \mathcal{O}^{\oplus 2d} \rightarrow \mathcal{O}(1)^{\oplus d} \rightarrow Q \rightarrow 0,
\end{equation}

as can be seen in \cite[Proposition 3]{JMT}. The Chern characters $\ch(I)=(2,0,-c,0)$ and $\ch(Q)=(0,0,d,0)$ are associated to the rank $2$ and $0$ instanton sheaves, respectively. 

Now we fix the notation that is going to be used in this section.

\begin{itemize}
    \item Fix $X$ to be equal to $\mathbb{P}^3$ or $Q_3$.
    \item Let $\Shione$ represent either $\Shial$ or $\Shitwo$, as defined in Examples \ref{Def-R1} and \ref{Def-Quadrica}, this is because we are mainly interested in the objects of these exceptional collections which are $\mathcal{O}_X(i)$ with $i=-1,0,1$.
    \item Let us denote by $\bar{R}$ the region obtained by Lemma \ref{Lemma-Existence} with respect to $\Shione$ and $\bar{R}_+,\bar{R}_-$ the points in $\bar{R}$ with $\beta>0$ and $\beta\leq0$, respectively.
    \item Fix $s=1/3$ for the rest of the paper and suppress the subscript $s$ from the definitions of walls, stability function and etc. Also, fix integers $k\geq 1$ and $c \geq 1$.
    \item The walls $\Upsilon_1,\Upsilon_2,\Upsilon_3$ will be defined for the object $E$ with $\dim(E)=[0,c,2c+k,c]$, as in Remark \ref{rmk-lambwall}. Their subset that can be actual $\lambda$-walls will be denoted by $\bar{\Upsilon}_1,\bar{\Upsilon}_2,\bar{\Upsilon}_3$. These are respectively $\Upsilon_1 \cap \bar{R}_-$, $\Upsilon_2 \cap \{(0,\alpha) \in R_1| \alpha^2\geq \frac{1}{3}\}$, $\Upsilon_3 \cap \bar{R}_+$.
\end{itemize}

We start by proving a complement of \cite[Proposition 2.8]{AO} to apply also to instanton sheaves, instead of only instanton bundles. When $X=Q_3$, the analogous of \cite[Proposition 2.8]{AO} was proved in \cite[Proposition 3.3]{Cos}. Combining these results with Proposition \ref{Prop-Mustability} will be responsible for the existence of the actual $\lambda$-walls for the instantons.

\begin{lemma}\label{Lemma-28-referencia}
Let  \begin{equation}\label{Monada-Refencia28}
    0 \rightarrow \mathcal{O}_X(-1)^{\oplus b} \rightarrow \mathcal{O}_X^{\oplus 2b+2} \rightarrow \mathcal{O}_X(1)^{\oplus b} \rightarrow 0 
\end{equation}
be a monad over $X$ with middle cohomology a torsion free sheaf $E$. Then we can decompose \eqref{Monada-Refencia28} in two ways:
\begin{equation}\label{Eq-1-Truncamento}
    0 \rightarrow \mathcal{O}_X(-1)^{\oplus b} \rightarrow \mathcal{O}_X^{\oplus 2b+2} \rightarrow K_1 \rightarrow 0
\end{equation}
\begin{equation}\label{Eq-1-Truncamento-2}
     0 \rightarrow E \rightarrow K_1 \rightarrow \mathcal{O}_X(1)^{\oplus b} \rightarrow 0
\end{equation}
and also,
\begin{equation}\label{Eq-2-Truncamento}
    0 \rightarrow K_2 \rightarrow \mathcal{O}_X^{\oplus 2b+2} \rightarrow \mathcal{O}_X(1)^{\oplus b} \rightarrow 0
    \end{equation}
    \begin{equation}\label{Eq-2-Truncamento-2}
    0 \rightarrow \mathcal{O}_X(-1)^{\oplus b} \rightarrow K_2 \rightarrow E \rightarrow 0
\end{equation}

Such that $K_2$ is always a $\mu$-stable bundle and $K_1$ is a $\mu$-stable bundle if $E$ is also a vector bundle. 
\end{lemma}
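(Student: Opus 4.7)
The plan is to first construct the four exact sequences by snake-lemma-style manipulations on the monad, then verify the local freeness statements, and finally reduce or adapt the two $\mu$-stability assertions.

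First I would set $K_1 := \operatorname{coker}(f)$ and $K_2 := \ker(g)$, where $f$ and $g$ denote the injection and surjection in the monad \eqref{Monada-Refencia28}. The sequences \eqref{Eq-1-Truncamento} and \eqref{Eq-2-Truncamento} are then tautological. Since $E = \ker(g)/\operatorname{Im}(f)$ and $g\circ f=0$, the map $g$ factors through a surjection $K_1 \twoheadrightarrow \mathcal{O}_X(1)^{\oplus b}$ whose kernel is exactly $E$, yielding \eqref{Eq-1-Truncamento-2}; dually $f$ factors through an injection $\mathcal{O}_X(-1)^{\oplus b} \hookrightarrow K_2$ with cokernel $E$, yielding \eqref{Eq-2-Truncamento-2}. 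Local freeness of $K_2$ is then immediate, as it is the kernel of a surjection between locally free sheaves of constant rank $b+2$; and from \eqref{Eq-1-Truncamento-2} the sheaf $K_1$ is an extension of $\mathcal{O}_X(1)^{\oplus b}$ by $E$, so it is locally free exactly when $E$ is.

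The $\mu$-stability of $K_1$ when $E$ is a vector bundle is the content of the cited \cite[Prop. 2.8]{AO} for $\mathbb{P}^3$ and \cite[Prop. 3.3]{Cos} for $Q_3$, and can be invoked directly. The new piece is the $\mu$-stability of $K_2$, which I would prove by adapting the same strategy symmetrically to the sequence \eqref{Eq-2-Truncamento-2}. Suppose $F \subset K_2$ is a saturated proper subsheaf of rank $r$ with $\mu(F)\geq \mu(K_2)=-b/(b+2)$. Intersecting $F$ with $\mathcal{O}_X(-1)^{\oplus b}$ inside $K_2$ produces a short exact sequence $0 \to F' \to F \to F'' \to 0$ with $F' \subset \mathcal{O}_X(-1)^{\oplus b}$ (so $\mu(F') \leq -1$) and $F'' \subset E$ (so $\mu(F'') \leq \mu(E)=0$ by the $\mu$-semistability of the instanton, which follows from the cohomological vanishings built into the monad). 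Substituting into $\operatorname{rk}(F)\mu(F) = \operatorname{rk}(F')\mu(F') + \operatorname{rk}(F'')\mu(F'')$ and using $\operatorname{rk}(F')\leq b$ together with $\operatorname{rk}(F')+\operatorname{rk}(F'')\leq b+1$ (by properness of $F$) contradicts $\mu(F)\geq -b/(b+2)$ in all generic cases.

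The hard part will be the borderline case where $F$ surjects onto a full-rank torsion-free subsheaf of $E$ with vanishing $c_1$, which can happen when $E$ is torsion free but not locally free (e.g.~via the proper inclusion $E\hookrightarrow E^{\vee\vee}$). To close this case I would combine the $\mu$-stability of $E$ (which rules out any $\mathcal{O}_X\hookrightarrow E$) with the saturation of $F$ inside $K_2$; alternatively, I would invoke Hoppe's criterion on $X$ and reduce the stability check to the vanishings $H^0(\wedge^q K_2 \otimes \mathcal{O}_X(m))=0$ for $1\leq q\leq b+1$ and integers $m\geq \lceil -qb/(b+2)\rceil$. Because $K_2\hookrightarrow \mathcal{O}_X^{\oplus 2b+2}$, each exterior power $\wedge^q K_2$ embeds in a trivial bundle, so these vanishings follow from the standard cohomological non-degeneracy of the instanton monad (in particular $H^0(K_2)=H^0(E)=0$), completing the proof.
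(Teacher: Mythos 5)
Your construction of the four sequences, the local freeness of $K_2$, and the citation of \cite[Proposition 2.8]{AO} and \cite[Proposition 3.3]{Cos} for $K_1$ in the locally free case all agree with the paper. The gap is in the part that is genuinely new, namely the $\mu$-stability of $K_2$. Your slope arithmetic only disposes of the easy cases: if $F\subset K_2$ has image $F''\subset E$ of rank $2$ with $c_1(F'')=0$ (which is possible for \emph{every} full-rank subsheaf of $E$, not only when $E$ fails to be locally free --- e.g.\ $F''=E$ itself if the extension admitted a lifting), then the best you get is $c_1(F)\le-\rk(F')$, hence $\mu(F)\le -r'/(r'+2)$, and this is $\ge -b/(b+2)$ for every $r'\le b$, so the arithmetic never produces a contradiction in this borderline case; that case is the entire content of the statement. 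Neither of your two proposed closures works. Stability of $E$ controls only subsheaves of rank $<2$ and says nothing about full-rank subsheaves of slope $0$, and saturating $F$ inside $K_2$ changes neither its rank nor its $c_1$. The Hoppe-criterion fallback rests on a false reduction: since $\mu(K_2)=-b/(b+2)$, a destabilizing saturated subsheaf of rank $q$ gives a line subsheaf $\mathcal{O}_X(d)\hookrightarrow\wedge^qK_2$ with $d\ge -qb/(b+2)$, so the vanishings actually required are $H^0(\wedge^qK_2(m))=0$ for the \emph{non-negative} twists $0\le m\le\lfloor qb/(b+2)\rfloor$ (for instance $H^0(\wedge^2K_2)=0$ and $H^0(\wedge^{b+1}K_2(b-1))=0$); your stated range of twists is reversed, and in any event these vanishings do not follow from the embedding $\wedge^qK_2\hookrightarrow\mathcal{O}_X^{\oplus\binom{2b+2}{q}}$ nor from $H^0(K_2)=H^0(E)=0$, because an embedding into a trivial bundle only kills sections of strictly negative twists. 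Those positive-twist vanishings are exactly the nontrivial content of \cite[Proposition 2.8]{AO}.

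For comparison, the paper's proof handles $K_2$ in the locally free case by dualizing (so that the cited results apply to the dual monad), and in the non-locally-free case it keeps $K_2$, which is still a bundle, dualizes $0\to\mathcal{O}_X(-1)^{\oplus b}\to K_2\to E\to 0$ to obtain $0\to E^\ast\to K_2^\ast\to\mathcal{O}_X(1)^{\oplus b}\to\mathcal{E}xt^1(E,\mathcal{O}_X)\to 0$, restricts to the complement $U$ of the singular locus of $E$, and reruns the Hoppe-type argument of \cite{AO} there, using that $\wedge^qK_2(l)$ is normal so that $h^0(\wedge^qK_2(l)|_U)=h^0(\wedge^qK_2(l))$. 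An argument of this strength (or some other genuine input) is needed to close your borderline case; note also that your parenthetical claim that $\mu$-semistability of the instanton sheaf ``follows from the cohomological vanishings built into the monad'' is not immediate, and your rank-one case in fact uses stability rather than semistability of $E$.
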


\begin{proof}
The case where $E$ is a vector bundle is a direct application of \cite[Proposition 2.8]{AO} and \cite[Proposition 3.3]{Cos} to the exact sequence \eqref{Eq-1-Truncamento} and \eqref{Eq-1-Truncamento-2}, and by dualizing \eqref{Eq-2-Truncamento} and \eqref{Eq-2-Truncamento-2}. For the case where $E$ is not locally free, notice that $K_2$ is a locally free sheaf by being a kernel of a map between vector bundles, and one can dualize sequence \eqref{Eq-2-Truncamento-2} to obtain
\begin{equation}\label{Eq-DualizedSeq}
    0 \rightarrow E^\ast \rightarrow K^\ast_2 \rightarrow \mathcal{O}_X(1)^{\oplus b} \rightarrow \mathcal{E}xt^1(E,\mathcal{O}_X) \rightarrow 0.
\end{equation} 

Let $L$ be the kernel of $\mathcal{O}_X(1)^{\oplus b} \rightarrow \mathcal{E}xt^1(E,\mathcal{O}_X)$ and $S=\Supp(\mathcal{E}xt^1(E,\mathcal{O}_X))$, $S$ is also the singular set of the instanton sheaf $E$. We know from \cite[Proposition 1.1.10]{HL} that $\dim(S)\leq 1$ because $E$ is torsion free and therefore we can consider the sequence \eqref{Eq-DualizedSeq} over the open subset $U=X\setminus S$. 

In this situation, we obtain that $L=\mathcal{O}_X(1)^{\oplus b}|_U$, hence we are in place to apply the same argument as in \cite[Proposition 2.8]{AO} with the caveat that $K_2$ being locally free implies that $\wedge^qK_2(l)$ is also normal for any $q$ and $l$, in the sense of \cite[Definition 1.1.11]{OS}, making it so that $h^0(\wedge^qK_2(l)|_U)=h^0(\wedge^qK_2(l))$.
\end{proof}

Using this technique we are not able to prove that $K_1$ is also $\mu$-stable in the non-locally free case because there is no way to establish that $K_1$ is a vector bundle, being a cokernel of a map between vector bundle and an extension of a torsion-free sheaf and a vector bundle. Furthermore, if the cohomology of the monad is a non-locally free sheaf then $K_1$ will never be $\mu$-stable, this will be clear in the following results.

\begin{lemma}\label{Lemma-DualMonad}
The object $\tau_{>-2}(F)$ satisfies the (semi)determinant condition if and only if the $\tau_{\leq -1}(F^\vee)$ also satisfies the (semi)determinant condition, for any $F\in \Shione$ with $\dim(F)=[0,a,b,c]$.
\end{lemma}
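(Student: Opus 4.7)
Plan: The assertion is a symmetry under the derived dual $(\cdot)^\vee = R\Hom(-,\mathcal{O}_X)$ and reduces to a $2\times 2$ determinant identity. First I would identify both truncations explicitly as $2$-step complexes via \eqref{Truncation Functor} and Lemma \ref{Lemma-Existence}: since $\dim F = [0,a,b,c]$, we obtain
\[
\tau_{\geq -1}(F) \simeq \bigl(E_2^{\oplus b} \xrightarrow{\phi} E_3^{\oplus c}\bigr).
\]
On both $\mathbb{P}^3$ and $Q_3$ the line bundles in the strong exceptional collections satisfy $E_1^\vee = E_3$, $E_2^\vee = E_2$, $E_3^\vee = E_1$ (the spinor bundle only appears in the $E_0$ slot, which is excluded by $a_0 = 0$), so the derived dual $F^\vee$, after the $[2]$-shift that brings it back into $\Shione$, has dimension vector $[0,c,b,a]$ and
\[
\tau_{\leq -1}(F^\vee) \simeq \bigl(E_1^{\oplus c} \xrightarrow{\phi^\vee} E_2^{\oplus b}\bigr),
\]
i.e.\ literally the dual morphism of $\phi$.

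Next, I would set up an order-reversing bijection between proper subobjects in the appropriate hearts. Given $G \hookrightarrow \tau_{\geq -1}(F)$ with $\dim G = [0,0,b',c']$, Proposition \ref{Prop-Linear} realises $G$ as a restriction $E_2^{\oplus b'} \to E_3^{\oplus c'}$ of $\phi$; applying $(\cdot)^\vee$ produces a quotient of $\tau_{\leq -1}(F^\vee)$ of dimension $[0,c',b',0]$, whose kernel is the paired subobject of complementary dimension $[0,c-c',b-b',0]$. Involutivity of $(\cdot)^\vee$ on perfect complexes makes the assignment bijective. The semi-determinant condition for $\tau_{\geq -1}(F)$ then reads $bc' - cb' \,(\geq)\, > 0$, whereas for $\tau_{\leq -1}(F^\vee)$ it reads $c(b-b') - b(c-c') = bc' - cb' \,(\geq)\, > 0$ --- the identical inequality. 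Running over all proper sub-dimensions, the two conditions coincide, yielding the biconditional.

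The main technical obstacle is bookkeeping: the derived dual does not preserve $\Shione$ but sends it into $\Shione[-2]$, so $\tau_{\leq -1}(F^\vee)$ must be understood after shifting $F^\vee$ back into $\Shione$, and one must check carefully that the subobject correspondence $G \mapsto \ker\bigl(\tau_{\leq -1}(F^\vee) \twoheadrightarrow G^\vee\bigr)$ is genuinely a bijection at the level of the relevant abelian category. Once these identifications are in place, the determinant identity above is immediate and the proof concludes.
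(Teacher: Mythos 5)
Your proposal is correct and follows essentially the same route as the paper's own (much terser) proof: exactness and involutivity of the derived dual together with $E_i^\vee=E_{n-i+1}$ for $i=1,2,3$, so that duality exchanges the two truncations and induces a bijection between subobjects and quotients. Your explicit identification of the truncations and the determinant identity $c(b-b')-b(c-c')=bc'-cb'$ simply spells out what the paper leaves implicit as a ``direct consequence.''
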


\begin{proof}
This is a direct consequence of the exactness and involution property of the derived dual. That is, exact triangles are kept exact after applying the derived dual $(-)^\vee: \Db \rightarrow \Db$ and that $(-)^{\vee \vee}=\text{Id}_{\Db}$. The fact that the exceptional objects $E_i$ are locally free and $E_i^\vee=E_{4-i}$ establishes that the dual linear complex is still in $\Shione$.
\end{proof}

\begin{proposition}\label{Prop-T1-T3}
Let $(\beta,\alpha) \in R_1\cap\{(0,\alpha) \in \mathbb{H}| \alpha^2\geq \frac{1}{3}\}$. Then any object $E$, with $\dim(E)=[0,c,2c+k,c]$,  $\lambda_{\beta,\alpha}$-semistable at both $\bar{\Upsilon}_1$ and $\bar{\Upsilon}_3$ is $\lba$-stable, unless there exists an object $F \in \Shione$ such that $\lba(F)=\lba(E)$ for every $(\beta,\alpha) \in \bar{R}$.
\end{proposition}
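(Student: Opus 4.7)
The plan is to argue by contradiction: assume $E$ is not $\lba$-stable at some $(0, \alpha_0)$ with $\alpha_0^2 \geq 1/3$. Then the Jordan--H\"older filtration in $\Aba$ produces a proper sub or quotient $F \in \Shione$ with $\lba(F) = \lba(E)$ at this stability condition, and in particular $(0, \alpha_0) \in \Upsilon_{F,E}$. Writing $\dim F = [0, a, b, d]$, the goal is to show that either $(a, b, d)$ is proportional to $(c, 2c+k, c)$ (the exception in the statement) or the existence of $F$ contradicts the semistability hypothesis at $\bar{\Upsilon}_1$ or $\bar{\Upsilon}_3$.

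The central computational input is the identity $f_{F, E} = a f_1 + b f_2 + d f_3$, which follows from the bilinearity of $\tau$ and $\rho$ on Chern characters together with the sign conventions coming from the shifts in $\Shione$. Applying this to $F = E$ gives the linear relation $c f_1 + (2c+k) f_2 + c f_3 \equiv 0$ on $\mathbb{H}$. A direct computation on the line $\{\beta = 0\}$ then shows $f_2 \equiv 0$ there, because both $E$ and $E_2[1]$ have $\ch_1 = \ch_3 = 0$, forcing $\tau(E)|_{\beta=0} = \tau(E_2[1])|_{\beta=0} = 0$. Combined with the relation above, this yields $f_1|_{\beta = 0} = -f_3|_{\beta = 0}$, so $f_{F, E}|_{\beta = 0} = (a - d) f_1$. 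Since $f_1(0, \alpha_0) \neq 0$ whenever $\alpha_0^2 > 1/3$ (its unique zero on the positive ray is precisely the $\alpha$-coordinate of the triple intersection point $P$ of $\Upsilon_1, \Upsilon_2, \Upsilon_3$), the wall condition forces $a = d$.

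With $a = d$, the wall equation simplifies further to $f_{F, E} = \tfrac{bc - a(2c+k)}{c} f_2$. Two cases remain. In case (A), $bc = a(2c+k)$, equivalent to $(a, b, d) = \tfrac{a}{c}(c, 2c+k, c)$, so $\ch(F) \propto \ch(E)$; then $f_{F, E}$ vanishes identically on $R_1$ and $\lba(F) = \lba(E)$ at every point of $R_1$, placing $F$ in the exception of the statement. In case (B), $bc \neq a(2c+k)$, so $\Upsilon_{F, E} = \Upsilon_2$ and $F$ strictly destabilizes $E$ on one of the two half-planes cut out by $\Upsilon_2$, the choice of half-plane being determined by the sign of $bc - a(2c+k)$. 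I would close the argument by showing that each such half-plane meets $\bar{\Upsilon}_1$ or $\bar{\Upsilon}_3$ inside $R_1$, producing an actual destabilization on one of those canonical walls and contradicting the semistability hypothesis.

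The main obstacle is the orientation argument at the end. It requires verifying that the two half-planes separated by $\Upsilon_2$ within $R_1$ each contain a segment of $\bar{\Upsilon}_1$ or $\bar{\Upsilon}_3$, so that neither sign of $bc - a(2c+k)$ allows $F$ to escape contradiction. I expect this to follow from the local configuration near $P$, where the three canonical walls emanate in three distinct directions, combined with the explicit sign of $f_1$ on $\{\beta = 0\}$ computed above.
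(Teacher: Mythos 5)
Your reduction has a genuine gap at the very first step. You assume that failure of $\lba$-stability at $(0,\alpha_0)$ yields, via a Jordan--H\"older filtration, a proper subobject or quotient $F$ with $\lba(F)=\lba(E)$ at that point, so that $(0,\alpha_0)\in\Upsilon_{F,E}$. Jordan--H\"older filtrations exist only for semistable objects: if $E$ is genuinely \emph{unstable} at $(0,\alpha_0)$, the Harder--Narasimhan factor satisfies the strict inequality $\lba(F)>\lba(E)$, the point $(0,\alpha_0)$ does not lie on $\Upsilon_{F,E}$, and your key deduction ``$f_{F,E}(0,\alpha_0)=0$ and $f_1(0,\alpha_0)\neq 0$ force $a=d$'' collapses. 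Since the statement asserts stability, the unstable case is precisely what must be excluded, so this is not a cosmetic omission. The repair is to use the \emph{sign}, not the vanishing, of $f_{F,E}$ on the line $\beta=0$ (a strict destabilizer forces $a>d$ there, because $\tau_{0,\alpha}(E)=0$ and $\tau_{0,\alpha}(F)=(d-a)(\tfrac16-\tfrac{\alpha^2}{2})$), and to play this against the two inequalities that semistability of $E$ at $\bar{\Upsilon}_1$ and $\bar{\Upsilon}_3$ imposes on any subobject $[0,a,b,d]$, namely $a(2c+k)\le bc\le d(2c+k)$; together these give $a\le d$, a contradiction, with equality throughout exactly in the exceptional case. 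This is in fact how the paper argues: it obtains those two inequalities by applying the determinant condition (Lemma \ref{Lemma-Determinant Condition}, via Proposition \ref{Prop-Mustability}) to the truncations $\tau_{\geq-1}$ and $\tau_{\leq-1}$ of $F\hookrightarrow E\twoheadrightarrow Q$, and then combines them with the vertical-line computation \eqref{Equation-Vertical}. Your identity $f_{F,E}=a f_1+b f_2+d f_3$ (mind a global sign, since $f_i=f_{E,E_i[\cdot]}$) evaluated on $\bar{\Upsilon}_1$ and $\bar{\Upsilon}_3$ reproduces exactly these inequalities, so the corrected version of your argument is essentially the paper's proof in different clothing rather than a new route.

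The second incomplete point, the orientation step in your case (B), is fillable and you should not leave it as an expectation: a direct computation with $\ch(E)=(-k,0,c,0)$ and $s=1/3$ gives $f_{E,\mathcal{O}_X[1]}=-\tfrac{c}{3}\beta^3$, so the numerical wall $\Upsilon_2$ is exactly $\{\beta=0\}$ and its two sides are $\{\beta<0\}$, which contains $\bar{\Upsilon}_1$ away from the triple point $(0,1/\sqrt3)$, and $\{\beta>0\}$, which contains $\bar{\Upsilon}_3$; hence either sign of $bc-a(2c+k)$ produces a strict destabilization of $E$ on one of the two canonical walls inside $R_1$, where the inclusion $F\hookrightarrow E$ persists because the heart is the rotated $\Shione$ throughout the quiver region. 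Note finally that at $\alpha^2=1/3$ exactly (the triple point) all objects of $\Shione$ have equal slope, so strict stability there should be read with the same caveat the paper implicitly uses by working at $\alpha$ slightly above $1/\sqrt3$.
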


\begin{proof}
Let $E$ be an object with $\dim(E)=[0,c,2c+k,c]$ and $F$ a destabilizing $\lambda_{\beta,\alpha}$-semistable subobject for $E$ at $\alpha^2>1/3$ . For any point $(0,\alpha)\in R_1$, since $E,F \in \mathcal{A}^{0,\alpha}$ with $\lambda_{0,\alpha}(F)\geq\lambda_{0,\alpha}(E)$ for every $\alpha$ in then open interval $(1/\sqrt{3},1/\sqrt{3}+\epsilon)$ for some small $\epsilon>0$, then $F \in \Shione$ and $\dim(F)=[0,f,g,h]$ for some $f,g,h\in \mathbb{Z}_{\geq 0}$. We can determine the following commutative diagram

\begin{equation}\label{Trucantion-Exact-Seq-1}
\begin{gathered}
    \xymatrix{
& 0 \ar[d] &0 \ar[d] &0 \ar[d]\\
 0 \ar[r] &\tau_{>-2}(F) \ar[d] \ar[r]  &F \ar[r] \ar[d]  &\mathcal{O}_X(-1)^{\oplus f} \ar[r] \ar[d]  &0\\
 0 \ar[r] &\tau_{>-2}(E) \ar[r] \ar[d] &E \ar[r] \ar[d]  &\mathcal{O}_X(-1)^{\oplus c} \ar[r] \ar[d] &0\\
 0 \ar[r] &\tau_{>-2}(Q) \ar[r] \ar[d] &Q \ar[r] \ar[d] &\mathcal{O}_X(-1)^{\oplus c-f} \ar[r] \ar[d]  &0\\
 &0 &0 &0
}
\end{gathered}
\end{equation}
where $Q$ is the quotient of $F \hookrightarrow E$ in $\Shione$. Since the $\lambda$-wall $\bar{\Upsilon}_1$ is actual, we can see that $\tau_{>-2}(E)$ is $\lba$-semistable and by Lemma \ref{Lemma-Determinant Condition} we know that $\tau_{>-2}(E)$ satisfies the determinant condition. Now, applying said condition to the exact sequence $\tau_{>-2}(F) \hookrightarrow \tau_{>-2}(E)$ we obtain the inequality \begin{equation}\label{ineq-1}
    (c+k)h\geq c(g-h).
\end{equation} 
Similarly, we can do the same argument for the stupid truncation $\tau_{\leq -1}$ applied to the exact sequence  $F \hookrightarrow E \twoheadrightarrow Q$ to obtain the inequality 
\begin{equation}\label{ineq-2}
    (c+k)f\leq c(g-f).
\end{equation}
Considering Lemma \ref{Lemma-Determinant-Calculation}, we see that if both of these inequalities were an equality then $f_{E,F}\equiv 0$. If that is not the case then one of those inequalities is a strict inequality

Now, analyze the $\lambda$-slope of $F$ and $E$ at the line $\beta=0$ using that $\lambda_{0,\alpha}(E)=\tau_{0,\alpha}(E)=0$ for all $\alpha>0$ so that if $F$ destabilizes $E$ at $\tilde{\alpha} \in (1/\sqrt{3},1/\sqrt{3}+\epsilon)$ for some small $\epsilon>0$ then $\tau_{0,\tilde{\alpha}}(F)> 0$. Applying the linearity of the operator $\tau_{0,\tilde{\alpha}}$ over the Chern characters we arrive at \begin{equation}\label{Equation-Vertical}
    \begin{split}
    \tau_{0,\tilde{\alpha}}(F) &=f\tau_{0,\tilde{\alpha}}(\mathcal{O}_X(-1))-g\tau_{0,\tilde{\alpha}}(\mathcal{O}_X)+h\tau_{0,\tilde{\alpha}}(\mathcal{O}_X(1))\\
    &=-f(\frac{1}{6}-\frac{\tilde{\alpha}^2}{2})+h(\frac{1}{6}-\frac{\tilde{\alpha}^2}{2})=(h-f)(\frac{1}{6}-\frac{\tilde{\alpha}^2}{2})
    \end{split}
\end{equation}
   
which by our choice of $\tilde{\alpha}$ implies $f\geq h$.

Combining the inequalities \eqref{ineq-1}, \eqref{ineq-2} and $f\geq h$ it is clear that we arrived at a contradiction.
\end{proof}

\begin{theorem}\label{Theorem-Uniqueness} Let $E \in \Shione$ be an object with $\dim(E)=[0,c,2c+k,c]$ and suppose it has an actual $\lambda$-wall over any two of the three canonical walls $\bar{\Upsilon}_i$, $i=1,2,3$. Then these two walls are the only actual walls for the object $E$ in $R_1$. Moreover, $E$ is $\lba$-semistable outside the respective actual walls, being strictly semistable if and only if there exists an subobject with dimension vector a multiple of $[0,c,2c+k,c]$.
\end{theorem}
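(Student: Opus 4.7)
The plan is to reduce the question to analyzing subobjects $F \hookrightarrow E$ in $\Shione$ via the truncation method of Proposition \ref{Prop-T1-T3}, extended to cover each of the three possible pairs of canonical walls; at the core of the argument is the fact that each actual wall hypothesis feeds a linear inequality on $\dim(F) = [0,f,g,h]$ through the determinant condition of Lemma \ref{Lemma-Determinant Condition}.

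First I observe that Lemma \ref{Lemma-Determinant-Calculation} forces every potential numerical $\lambda$-wall of $E$ inside $R_1$ to pass through the common point $p_0$ where $\phi_\sigma(E_1) = \phi_\sigma(E_2) = \phi_\sigma(E_3)$, so the candidate walls form a pencil through $p_0$, of which $\Upsilon_1, \Upsilon_2, \Upsilon_3$ are three distinguished members. I next suppose that $F \hookrightarrow E$ is a $\lba$-semistable subobject destabilizing $E$ at some $(\beta,\alpha) \in R_1$; by Lemma \ref{Lemma-Dimension} one has $F \in \Shione$ with $\dim(F) = [0,f,g,h]$ and componentwise $0 \leq f \leq c$, $0 \leq g \leq 2c+k$, $0 \leq h \leq c$.

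For each of the two hypothesized actual walls, I then apply one of the stupid truncation functors $\tau_{\geq -1}$ or $\tau_{\leq -1}$ to the short exact sequence $0 \to F \to E \to Q \to 0$, reproducing the commutative diagram \eqref{Trucantion-Exact-Seq-1} and its dual, obtained via Lemma \ref{Lemma-DualMonad}. The truncated object is a two-step complex, so its determinant condition along the actual wall yields a linear inequality between two of $f,g,h$ (for instance $(c+k)h \geq c(g-h)$ when the actual wall is $\bar{\Upsilon}_1$, and the dual one $(c+k)f \leq c(g-f)$ when it is $\bar{\Upsilon}_3$). Carrying this out for the specific pair in $\{\bar{\Upsilon}_1, \bar{\Upsilon}_2, \bar{\Upsilon}_3\}$ that is assumed actual gives a system of two linear inequalities that $(f,g,h)$ must satisfy; combined with the orientation identity \eqref{Equation-Vertical} --- whose sign is determined by the pencil structure around $p_0$ --- this system either contradicts the hypothesis that $F$ destabilizes $E$ off the two assumed walls, or forces equality in both inequalities.

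Equality in both determinant inequalities is, by Lemma \ref{Lemma-Determinant-Calculation}, equivalent to $f_{F,E} \equiv 0$, which forces $[0,f,g,h]$ to be a non-negative scalar multiple of $[0,c,2c+k,c]$. Such an $F$ satisfies $\lba(F) = \lba(E)$ at every point of $R_1$ and accounts for exactly the strictly semistable locus described in the theorem; in particular, outside the two actual walls this is the unique way in which $E$ can fail to be $\lba$-stable. The main obstacle I anticipate is in the cases where one of the two hypothesized walls is $\bar{\Upsilon}_2$, since the relevant truncation is not immediately covered by Proposition \ref{Prop-T1-T3}: one must combine a carefully chosen stupid truncation with Lemma \ref{Lemma-DualMonad} to extract both inequalities, and then control the sign of the determinant formula along the full wall $\bar{\Upsilon}_2$ rather than only on the vertical probe line $\beta = 0$ used in Proposition \ref{Prop-T1-T3}.
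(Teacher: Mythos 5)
Your overall strategy is the paper's: truncate the exact sequence $0\to F\to E\to Q\to 0$, use Lemma \ref{Lemma-Determinant Condition} at the actual walls to get linear inequalities on $[0,f,g,h]$, use the $\beta=0$ computation \eqref{Equation-Vertical}, and identify the equality case with $\dim(F)$ proportional to $\dim(E)$ (the paper also reduces the pair $(\bar{\Upsilon}_1,\bar{\Upsilon}_3)$ to the mixed cases via Proposition \ref{Prop-T1-T3}, as you do). However, there is a genuine gap exactly at the step you flag as the "main obstacle" and then leave unresolved: the assertion that the two inequalities together with \eqref{Equation-Vertical} ``either contradict the hypothesis that $F$ destabilizes $E$ off the two assumed walls, or force equality in both inequalities.'' Knowing $\det(A)=h(2c+k)-cg\geq 0$ and $h\geq f$ does not by itself control the sign of $f_{E,F}$ at an arbitrary point of the two-dimensional region off the walls; the pencil-through-$p_0$ observation from Lemma \ref{Lemma-Determinant-Calculation} only locates the walls, it does not orient them. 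The missing ingredient is the explicit reduction of the $3\times 3$ determinant to
\[ f_{E,F}=\frac{\det(A)}{6}(-\beta^3)+(f-h)\,f_{E,\mathcal{O}_X(-1)[2]}, \]
i.e.\ equation \eqref{Wall Reduction -1} (and its mirror for $\bar{R}_+$), which makes both summands manifestly non-negative in $\bar{R}_-$ outside $\bar{\Upsilon}_1$ and hence shows $F$ cannot destabilize there unless $\det(A)=0$ and $f=h$, i.e.\ unless $\dim(F)$ is proportional to $\dim(E)$. Without this (or an equivalent sign argument valid on the whole region, not just on the line $\beta=0$), the proof does not close.

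Two smaller inaccuracies in the same part of your plan: first, there is no stupid truncation realizing the middle wall, so when one of the hypothesized actual walls is $\bar{\Upsilon}_2$ you cannot extract a determinant-condition inequality from a truncation plus Lemma \ref{Lemma-DualMonad}; the inequality $h\geq f$ there comes directly from semistability of $E$ on $\bar{\Upsilon}_2$ via the computation \eqref{Equation-Vertical}. Second, $\bar{\Upsilon}_2$ is by definition the segment of the vertical line $\beta=0$ with $\alpha^2\geq 1/3$, so there is no issue of ``controlling the sign along the full wall $\bar{\Upsilon}_2$ rather than the probe line $\beta=0$''; the real work lies in propagating the sign information away from $\beta=0$ into all of $\bar{R}_-$ (resp.\ $\bar{R}_+$), which is precisely what \eqref{Wall Reduction -1} accomplishes. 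Finally, to get the full statement you also need the paper's closing observation for the mixed case: since the outside of $\bar{\Upsilon}_2$ is $\beta<0$, once $\bar{\Upsilon}_2$ is an actual wall the object is unstable on the other side, so no further actual walls can occur in $\bar{R}_+$; your proposal does not address this half-plane at all.
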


We will divide the proof in two situations: Either the walls to be considered are $\bar{\Upsilon}_1$ and $\bar{\Upsilon}_3$ or one of them is $\bar{\Upsilon}_2$. The former case is a combination of the latter ones and Proposition \ref{Prop-T1-T3}. The proof of the case with both $\bar{\Upsilon}_1$ and $\bar{\Upsilon}_2$ being actual is the same argument as the case as in the with $\bar{\Upsilon}_2$ and $\bar{\Upsilon}_3$ being actual, so that we will only show one.

\begin{proof}
\textit{Case ($\bar{\Upsilon}_1,\bar{\Upsilon}_2$):} Assume that $F$ is an object with $\dim(F)=[0,f,g,h]$ that determines an actual wall for $E$. Because $E$ is $\lba$-semistable at $\bar{\Upsilon}_2$, we know that $h\geq f$ by \eqref{Equation-Vertical} . By the functoriality of the truncation functor we know that $\tau_{>-2}(F)$ is a subobject of $\tau_{>-2}(E)$, and by Lemma \ref{Lemma-Determinant Condition} we have with $\det(A):=h(2c+k)-cg \geq 0$. Now we can apply Lemma \ref{Lemma-Determinant-Calculation} to determine the equation for the $\lambda$-wall determined by $F$ as

\begin{equation}
f_\sigma(E,F) =\begin{vmatrix}
f_\sigma(\mathcal{O}_X(1),\mathcal{O}_X) &f_\sigma(\mathcal{O}_X(1),\mathcal{O}_X(-1)) &f_\sigma(\mathcal{O}_X,\mathcal{O}_X(-1))\\
c &2c+k &c\\
f &g &h
\end{vmatrix}.
\end{equation}

The fact that its given by a determinant is useful to reduce the wall equation to

\begin{equation}
f_\sigma(E,F) =\begin{vmatrix}
f_\sigma(\mathcal{O}_X(1),\mathcal{O}_X) &f_\sigma(\mathcal{O}_X(1),\mathcal{O}_X(-1)) &f_\sigma(\mathcal{O}_X,\mathcal{O}_X(-1))\\
c &2c+k &c\\
f-h &\frac{-\det(A)}{c} &0
\end{vmatrix}.
\end{equation}
So that we can describe $f_\sigma(E,F)$ as \begin{equation}\label{Wall Reduction -1}
     f_\sigma(E,F)=\frac{\det(A)}{6}(-\beta^3)+(f-h)f_\sigma(E,\mathcal{O}_X(-1)[2]).
\end{equation}
Moreover, if $\sigma \in \bar{R}_-$ is outside the numerical $\lambda$-wall defined by $\mathcal{O}_X(-1)[2]$ then \linebreak $f_\sigma(E,\mathcal{O}_X(-1)[2])$ is strictly negative, making $f_\sigma(E,F)=0$ if and only if $F$ defines either the $\lambda$-wall $\Upsilon_2$, $\Upsilon_{3}$ or $\dim(F)$ is proportional to $\dim(E)$. To conclude we just have to prove that it does not have any $\lambda$-wall in $\bar{R}_+$, but that is just a consequence of it having the wall $\bar{\Upsilon}_2$ as actual $\lambda$-wall and from observing that the outside of this wall is exactly $\beta<0$, making it that any object defining the wall $\bar{\Upsilon}_2$ should destabilize $E$ for $\beta>0$.

\textit{Case ($\bar{\Upsilon}_1,\bar{\Upsilon}_3$):} For that, we just need to apply Proposition \ref{Prop-T1-T3} because in this case we have that $E$ will $\lba$-semistable at $\bar{\Upsilon_2}$ and therefore, from cases $i=1,2$ and $i=2,3$, we can conclude that $E$ does not have any other wall beyond $\bar{\Upsilon}_1$ and $\bar{\Upsilon}_3$
\end{proof}

\begin{rmk}
The statement of Theorem \ref{Theorem-Uniqueness} can be regarded in another, less geometric, manner, where instead of demanding that the canonical walls are actual we assumed the respective truncation functor applied to $E$ satisfies the determinant condition. For that we would just need to adjust the notion of the determinant condition to the case of $\bar{\Upsilon}_2$ to which we would say $E$ satisfies the middle determinant condition if every for every subobject $F$ with $\dim(F)=[0,f,g,h]$ it satisfied $f>h$.
\end{rmk}

As a direct consequence of Theorem \ref{Theorem-Uniqueness}, locally free instanton sheaves shifted by $1$ are $\lba$-stable at every point of $R_1$, outside the walls $\bar{\Upsilon}_1$ and $\bar{\Upsilon}_3$.

\begin{corollary}\label{cor-inststab}
For any locally free rank $2$ instanton $I$ with charge $c$, $I[1]$ is $\lba$-stable for every $(\beta,\alpha) \in \bar{R}$ outside of both $\lambda$-walls $\bar{\Upsilon}_1$ and $\bar{\Upsilon}_3$.
\end{corollary}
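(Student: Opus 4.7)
The strategy is to recognize $I[1]$ as an object in $\Shione$ to which Theorem \ref{Theorem-Uniqueness} applies, with $\bar\Upsilon_1$ and $\bar\Upsilon_3$ playing the role of the two required actual walls. The first input is purely numerical: from the defining monad \eqref{Instaton-Defini-Monad} one reads $\dim_\mathcal{E}(I[1]) = [0,\,c,\,2c+2,\,c]$, which is of the required shape $[0,\,c,\,2c+k,\,c]$ with $k = 2$.

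The actuality of the two canonical walls goes through the truncation functors. Applying the natural truncations \eqref{Truncation Functor} to $I[1]$ produces, up to shift, the two-step linear complexes whose middle cohomologies are precisely the bundles $K_1 = \coker(\mathcal{O}_X(-1)^{\oplus c} \to \mathcal{O}_X^{\oplus 2c+2})$ and $K_2 = \ker(\mathcal{O}_X^{\oplus 2c+2} \to \mathcal{O}_X(1)^{\oplus c})$ of Lemma \ref{Lemma-28-referencia}. Since $I$ is locally free, that lemma guarantees that both $K_1$ and $K_2$ are $\mu$-stable vector bundles. Proposition \ref{Prop-Mustability}, together with the dual variant noted in the remark that follows it, upgrades $\mu$-stability to the strict determinant condition for these two-step complexes viewed inside $\Shione$, and Lemma \ref{Lemma-Determinant Condition} then converts the strict determinant condition into the actuality of $\bar\Upsilon_1$ and $\bar\Upsilon_3$ as destabilizing walls for the respective truncations of $I[1]$. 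The triangles \eqref{Truncation Functor} then promote these destabilizations to actual $\lambda$-walls of $I[1]$ itself on $\bar\Upsilon_1$ and $\bar\Upsilon_3$.

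With the hypothesis of Theorem \ref{Theorem-Uniqueness} verified, the theorem at once yields that $\bar\Upsilon_1$ and $\bar\Upsilon_3$ exhaust the actual walls of $I[1]$ in $R_1$ and that $I[1]$ is $\lba$-semistable outside these walls. The main delicate step, and the one I expect to be the principal obstacle, is excluding the exceptional clause of the theorem in order to upgrade semistability to strict stability: one must rule out any proper subobject $F \hookrightarrow I[1]$ with $\lba(F) = \lba(I[1])$ identically on $R_1$, equivalently with $\dim_\mathcal{E}(F)$ a nontrivial rational multiple of $[0,\,c,\,2c+2,\,c]$. The plan is to observe that such an $F$ together with its quotient would induce a decomposition of the monad \eqref{Instaton-Defini-Monad} into proper sub-monads of strictly smaller positive charge, whose middle cohomologies would be proper subsheaves and quotient sheaves of $I$ of positive rank, contradicting the $\mu$-stability, hence the simpleness, of the locally free rank-$2$ instanton $I$.
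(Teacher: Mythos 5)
Your overall architecture is the same as the paper's: read off $\dim_\mathcal{E}(I[1])=[0,c,2c+2,c]$, identify the truncations $\tau_{\leq -1}(I[1])\simeq K_1[1]$ and $\tau_{\geq -1}(I[1])\simeq K_2[1]$, use local freeness of $I$ so that Lemma \ref{Lemma-28-referencia} makes both $K_1$ and $K_2$ $\mu$-stable, pass to the strict determinant condition via Proposition \ref{Prop-Mustability} and its dual remark, and feed this into Lemma \ref{Lemma-Determinant Condition} and Theorem \ref{Theorem-Uniqueness} (in the reformulation of the remark following that theorem, which replaces ``actual wall'' by ``truncation satisfies the determinant condition''). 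Up to that point the proposal reproduces the paper's intended argument.

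The gap is in your final step, the exclusion of a proper subobject $F\hookrightarrow I[1]$ with $\dim_\mathcal{E}(F)$ proportional to $[0,c,2c+2,c]$. Such an $F$ corresponds to a termwise subcomplex of the monad \eqref{Instaton-Defini-Monad}, not to a sub-monad: the restricted differentials need not define a monad (the induced map on the quotient complex can acquire a kernel in degree $-2$, and the restricted right-hand map need not be surjective). Consequently its middle cohomology is not a subsheaf of $I$; the long exact sequence only gives $0\to\mathcal{H}^{-2}(Q)\to\mathcal{H}^{-1}(F)\to I$ and $I\to\mathcal{H}^{-1}(Q)\to\mathcal{H}^{0}(F)\to 0$, so the claim ``proper subsheaves and quotient sheaves of $I$ of positive rank, contradicting $\mu$-stability, hence simpleness'' does not follow as stated. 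One can repair this with a rank-and-$c_1$ bookkeeping of $\mathcal{H}^{-2}(Q)$ (a subsheaf of $\mathcal{O}_X(-1)^{\oplus})$ and $\mathcal{H}^{0}(F)$ (a quotient of $\mathcal{O}_X(1)^{\oplus}$), comparing the image of $\mathcal{H}^{-1}(F)$ in $I$ with $\mu$-stability of $I$, but this is a genuinely extra argument that your sketch does not supply. The shorter route, and the one the paper uses in the proof of Proposition \ref{Lemma-Instantons Moduli}, is already in your hands: apply the truncation functor to the hypothetical inclusion. Then $\tau_{\geq -1}(F)\hookrightarrow\tau_{\geq -1}(I[1])$ is a nonzero proper subobject whose dimension vector is proportional to $[0,0,2c+2,c]$, hence has vanishing determinant, contradicting the strict determinant condition for $\tau_{\geq -1}(I[1])$ that you obtained from the $\mu$-stability of $K_2$. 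With that substitution the proof is complete and coincides with the paper's.
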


The description of any wall by equation \eqref{Wall Reduction -1} gives us a corollary about the intersection of the numerical $\lambda$-walls with $\bar{\Upsilon}_i$.

\begin{corollary}\label{Corollary-uniqueness}
Let $E$ be an object with $\dim(E)=[0,c,2c+k,c]$. If either $\tau_{> -2}E$ or $\tau_{\leq -1}E$ satisfy the determinant condition, then no other numerical $\lambda$-wall can cross the respective actual $\lambda$-wall, determined by the truncation, at a point different then $(0,1/\sqrt{3})$, unless this numerical $\lambda$-wall determines the same $\lambda$-wall as the respective $\bar{\Upsilon}_i$.
\end{corollary}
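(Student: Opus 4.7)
The plan is to reuse the identity derived inside the proof of Theorem \ref{Theorem-Uniqueness}, case $(\bar{\Upsilon}_1,\bar{\Upsilon}_2)$, and then to restrict it to the curve $\bar{\Upsilon}_1$ itself. Assume first that $\tau_{\geq-1}E$ satisfies the determinant condition; the claim then concerns $\bar{\Upsilon}_1$. Given any $F\in\Shione$ with $\dim(F)=[0,f,g,h]$ whose numerical $\lambda$-wall we wish to test, functoriality of $\tau_{\geq-1}$ provides $\tau_{\geq-1}F\hookrightarrow\tau_{\geq-1}E$, so the hypothesis forces $\det(A):=h(2c+k)-cg\geq0$. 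Feeding this into the determinantal row reduction supplied by Lemma \ref{Lemma-Determinant-Calculation} reproduces, verbatim, the identity
\[ f_{E,F}(\beta,\alpha)=\frac{\det(A)}{6}(-\beta^{3})+(f-h)\,f_{E,\mathcal{O}_X(-1)[2]}(\beta,\alpha), \]
which is nothing but \eqref{Wall Reduction -1}.

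Now restrict to $\bar{\Upsilon}_1$. By its very definition $\bar{\Upsilon}_1\subset\{f_{E,\mathcal{O}_X(-1)[2]}=0\}$, so the second summand vanishes identically along the wall, and the crossing condition $f_{E,F}|_{\bar{\Upsilon}_1}=0$ collapses to $\det(A)\,\beta^{3}=0$. Two alternatives remain. If $\det(A)=0$ then the full identity collapses to $f_{E,F}=(f-h)f_{E,\mathcal{O}_X(-1)[2]}$, so $\Upsilon_{E,F}$ coincides with the numerical wall containing $\bar{\Upsilon}_1$: this is precisely the exception permitted by the statement. Otherwise $\det(A)>0$ and the only possibility is $\beta=0$; by Lemma \ref{Lemma-Determinant-Calculation}, every numerical $\lambda$-wall of a $3$-step complex in $\Shione$ must pass through the unique common-slope point of the generating exceptional objects, which on the line $\beta=0$ is exactly $(0,1/\sqrt{3})$. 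Hence any additional crossing is forced to occur there.

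The case where $\tau_{\leq-1}E$ satisfies the determinant condition, with $\bar{\Upsilon}_3$ playing the role of $\bar{\Upsilon}_1$, follows without new calculation by duality: Lemma \ref{Lemma-DualMonad} translates it into the previous situation applied to $E^{\vee}$, and the derived dual exchanges $\bar{R}_{-}$ with $\bar{R}_{+}$ and $\bar{\Upsilon}_1$ with $\bar{\Upsilon}_3$. I do not anticipate any real obstacle: once one observes that $\bar{\Upsilon}_1$ annihilates the second summand of \eqref{Wall Reduction -1}, the whole corollary is a one-line reading of that identity, and the only delicate point is keeping track of the degenerate case $\det(A)=0$, which matches the exception built into the statement.
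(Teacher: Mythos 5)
Your proof is correct and follows essentially the same route as the paper: restrict the identity \eqref{Wall Reduction -1} to $\bar{\Upsilon}_1$, where $f_{E,\mathcal{O}_X(-1)[2]}$ vanishes, so a crossing at a point with $\beta\neq 0$ forces $\det(A)=0$, leaving only the same numerical wall (or the degenerate subcase $f=h$, where $f_{E,F}\equiv 0$ and $\dim(F)$ is proportional to $\dim(E)$), while the unique point of $\bar{\Upsilon}_1$ with $\beta=0$ is $(0,1/\sqrt{3})$; the $\bar{\Upsilon}_3$ case is handled symmetrically, and your dualization via Lemma \ref{Lemma-DualMonad} is a fine way to make the paper's ``analogous'' precise. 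One small remark: the step asserting $\tau_{\geq-1}F\hookrightarrow\tau_{\geq-1}E$ for an arbitrary $F$ defining a numerical $\lambda$-wall is unjustified, since a numerical wall does not require $F$ to be a subobject of $E$, but it is also superfluous --- your dichotomy only uses $\det(A)=0$ versus $\det(A)\neq 0$, so dropping that sentence leaves the argument intact.
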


We will prove this result only for $i=1$, as for the other case of $i=2$ is analogous.

\begin{proof}
First, observe that $\bar{\Upsilon}_1$ is only an actual $\lambda$-wall for the points $(\beta,\alpha)$ where $\beta\leq 0$. The point $\beta=0$ in $\bar{\Upsilon}_1$ has $\alpha^2=1/3$. Now, we assume there exists a stability condition $\sigma$ at the intersection $\bar{\Upsilon}_1 \cap \Upsilon_{E,F}$ so that $f_\sigma(E,F)=0$, but we know from $\sigma \in \bar{\Upsilon}_1$ that $f_\sigma(E,\mathcal{O}_X(-1)[2])=0$ and $\tau_{>-2}(E)$ satisfies the determinant condition. Moreover, using equation \eqref{Wall Reduction -1} and its notation, we also can conclude that $\det(A)=0$. This implies that either $\bar{\Upsilon}_1 = \Upsilon_{E,F}$, without taking in to account the orientation, or $\dim(F)$ is proportional to $\dim(E)$ and in that case $E$ has the same $\lambda$-slope as $F$ at every point.
\end{proof}

Next, we analyze the stability of the rank $0$ instanton sheaves. This will be important in determining the uniqueness of the walls for the non-locally free instantons sheaf. 

\begin{lemma}\label{lemma-rank0}
On the case of $X=\mathbb{P}^3$, rank $0$ instanton sheaves are $\lambda_{\beta,\alpha}$-semistable for every point $(\beta,\alpha)\in \bar{R}\cap\{(0,\alpha) \in \mathbb{H}| \alpha^2\geq \frac{1}{3}\}$.
\end{lemma}

\begin{proof}
From Lemma \ref{Lemma-Existence} we conclude that $Q \in \mathcal{A}^{0,\frac{1}{\sqrt{3}}}$ and that $Q$ is an extension of $\mathcal{O}_X(-1)^{\oplus d}[2]$, $\mathcal{O}_X^{\oplus 2d}[1]$ and $\mathcal{O}_X(1)^{\oplus d}$ in $\Shial$ but all of these objects have the same $\lambda$-slope in $(0,1/\sqrt{3})\in \mathbb{H}$. Therefore $Q$ is $\lambda_{0,\frac{1}{\sqrt{3}}}$-semistable at that point. 

Suppose that $F$ is a destabilizing object for $Q$ at $\alpha \in (1/\sqrt{3},1/\sqrt{3}+\epsilon)$ for some $\epsilon>0$. Then $F$ is $\lambda_{0,\alpha}$-semistable and $F \in \Shial$ with $\dim(F)=[0,f,g,h]$. As in equation \eqref{Equation-Vertical}, we know that $\lambda_{0,\alpha}(Q)=0$ for every $\alpha$ and if $F$ destabilizes $Q$ then $f>h$.

But now we can study the following exact sequence in $\mathcal{A}^{0,\frac{1}{\sqrt3}}$
\begin{equation}
    0 \rightarrow F \rightarrow Q \rightarrow G \rightarrow 0.
\end{equation}

Applying the cohomology functor $\HH^{0}_{\Coh}$ and $\HH^{0}_{\mathcal{B}^\beta}$ for $\beta=0$ to this sequence we obtain that $\HH^{-2}(F)=0$, $\HH^{-2}(G)=\HH^{-1}(F)$, $\HH^{-1}_{\mathcal{B}^0}(F)=0$. But $\ch_1^{0}(F)=\ch_1^0(\mathcal{H}_{\mathcal{B}^0}^{0}(F))=h-f$ should be positive because $F \in \mathcal{B}^{0}$, which is impossible due to $F$ destabilizing $Q$. 
\end{proof}

\begin{corollary}\label{Corollary-Unicity}
At the intersection, with $(\beta,\alpha)\in \bar{R}\cap\{(0,\alpha) \in \mathbb{H}| \alpha^2\geq \frac{1}{3}\}$, instanton sheaves are $\lambda_{\beta,\alpha}$-semistable.
\end{corollary}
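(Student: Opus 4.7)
The plan is to realize every instanton sheaf shift $I[1]$ as an extension, inside $\mathcal{A}^{0,\alpha}$, of a shifted locally free instanton $I^{\vee\vee}[1]$ by a rank $0$ instanton $Q$, each of which has already been shown to be Bridgeland semistable of $\lba$-slope $0$ on the segment $R_1 \cap \{(0,\alpha)\mid \alpha^2 \geq 1/3\}$. The input from \cite{JMT} is the canonical reflexive hull short exact sequence of coherent sheaves
\[ 0 \to I \to I^{\vee\vee} \to Q \to 0, \]
where $I^{\vee\vee}$ is a locally free instanton of charge $c'=c-d$ and $Q$ is a rank $0$ instanton of charge $d$.

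First I would verify that on the segment all three terms, appropriately shifted, belong to the heart $\mathcal{A}^{0,\alpha}$. Membership of $Q$ is contained in the proof of the preceding lemma and membership of $I^{\vee\vee}[1]$ is part of the preceding corollary on locally free instantons. For $I[1]$ itself, since the defining monad has $I$ as its unique cohomology, the derived-category object $I[1]$ is quasi-isomorphic to the three-term complex with $\mathcal{O}(-1)^{\oplus c}$, $\mathcal{O}^{\oplus 2c+2}$, $\mathcal{O}(1)^{\oplus c}$ placed in degrees $-2,-1,0$. Its stupid truncations realize $I[1]$ as an iterated extension of $\mathcal{O}(-1)^{\oplus c}[2]$, $\mathcal{O}^{\oplus 2c+2}[1]$ and $\mathcal{O}(1)^{\oplus c}$, all of which lie in $S(\mathcal{E}_1)$; hence $I[1]\in \mathcal{A}^{0,\alpha}$ by Lemma \ref{Lemma-Existence} applied to the dimension vector $[0,c,2c+2,c]$. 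With all three terms in the heart, taking $\mathcal{A}^{0,\alpha}$-cohomology of the rotated triangle $Q \to I[1] \to I^{\vee\vee}[1] \to Q[1]$ collapses it to a short exact sequence
\[ 0 \to Q \to I[1] \to I^{\vee\vee}[1] \to 0 \]
in $\mathcal{A}^{0,\alpha}$.

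The slope calculation is immediate: the Chern characters $\ch(Q)=(0,0,d,0)$, $\ch(I[1])=(-2,0,c,0)$ and $\ch(I^{\vee\vee}[1])=(-2,0,c',0)$ all have $\ch_1=\ch_3=0$, so along $\beta=0$ the function $\tau_{0,\alpha,1/3}$ vanishes on each, forcing the common $\lba$-slope to be $0$. The preceding lemma gives $Q$ Bridgeland semistable of slope $0$, and the preceding corollary gives $I^{\vee\vee}[1]$ Bridgeland stable of the same slope (the segment meets $\bar{\Upsilon}_1$ and $\bar{\Upsilon}_3$ only at the apex $(0,1/\sqrt{3})$, where semistability still holds by closedness). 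Since an extension of two $\lba$-semistable objects of equal slope is itself $\lba$-semistable, $I[1]$ is Bridgeland semistable on the whole segment.

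The main obstacle is the first step: checking that the reflexive-hull triangle genuinely descends to a short exact sequence in $\mathcal{A}^{0,\alpha}$. This amounts to placing all three terms in the heart; the analysis above reduces the nontrivial piece to a direct application of Lemma \ref{Lemma-Existence} to the monad description of $I[1]$, while the other two memberships are recycled from the immediately preceding results. Everything else is a formal transfer of Bridgeland semistability along the displayed short exact sequence.
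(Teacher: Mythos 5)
Your proposal is correct and follows essentially the same route as the paper: the paper's own (very terse) proof of this corollary consists precisely of invoking the shifted double-dual sequence $0 \to Q \to I[1] \to I^{\ast\ast}[1] \to 0$ in $\mathcal{A}^{0,\alpha}$, with $I^{\ast\ast}$ a locally free instanton by \cite{JMT}, and combining it with the semistability of $Q$ and of shifted locally free instantons established immediately beforehand. Your write-up merely makes explicit the heart-membership checks (via Lemma \ref{Lemma-Existence}) and the equal-slope, extension-closedness step that the paper leaves implicit.
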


\begin{proof}
Notice that the double dual of an instanton sheaf is a locally free instanton sheaf, see \cite{JMT}, and the existence of the shifted double dual sequence \[ 0 \rightarrow Q \rightarrow I[1] \rightarrow I^{\ast\ast}[1] \rightarrow 0\] in $\mathcal{A}^{0,\alpha}$.
\end{proof}

\begin{rmk}
If an analogous result as \cite[Proposition 2.8]{AO} were found to be true, it would be possible to generalize Corollary \ref{Corollary-Unicity}, Lemma \ref{lemma-rank0} and Corollary \ref{cor-inststab} to higher rank instanton sheaves.
\end{rmk}

\begin{example}
\begin{figure}[htp]
    \centering
    \includegraphics[width=10cm]{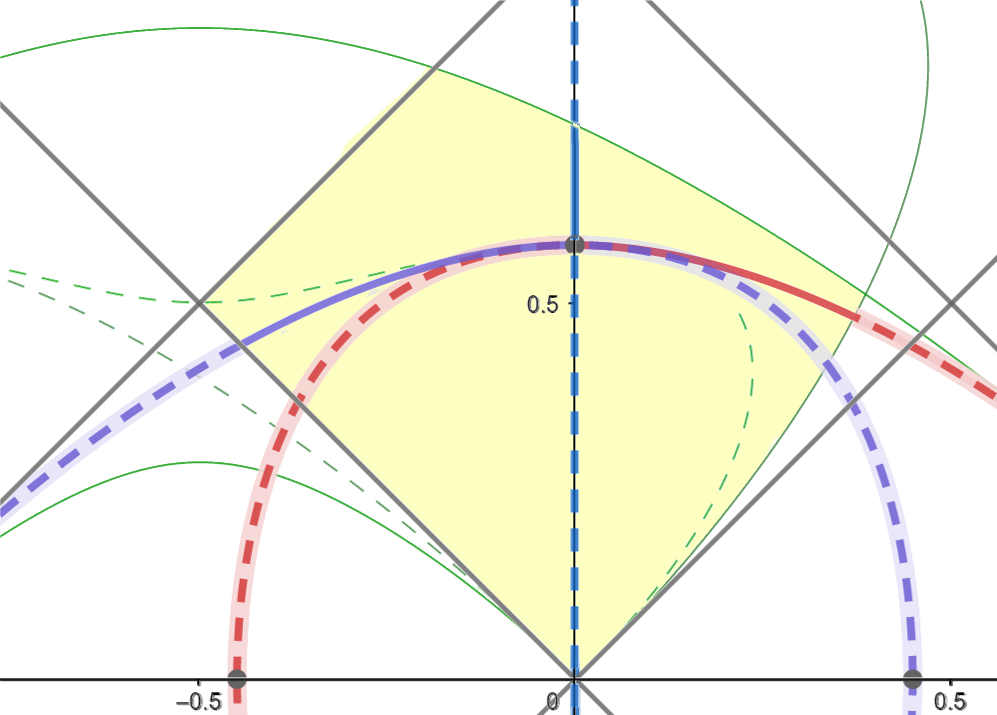}
    \caption{For $X=\mathbb{P}^3$: the yellow region represents $\bar{R}$, the purple curve is $\bar{\Upsilon}_1$, the blue curve is $\bar{\Upsilon}_2$, the red curve is $\bar{\Upsilon}_3$ and the doted curves represent the non-necessarily actual parts of these walls. The green curves, either doted or not, represent the curves determining the quiver region.}\label{Imagem - Parede Real Instantons}
\end{figure}

Figure \ref{Imagem - Parede Real Instantons} represents the actual $\lambda$-walls discussed in this section for the instantons with charge $c=2$ on $\mathbb{P}^3$. For different values of $c$ the picture does not change much, at least in this restricted region.

\end{example}

We can summarize the results relating the stability of the instanton sheaves in the next proposition.

\begin{proposition}\label{Lemma-Instantons Moduli}
On the case of $X=\mathbb{P}^3$, let \[ \mathcal{I}(c)=\{I\in \Shial|\text{ $\dim(I)=[0,c,2c+2,c]$, $\HH^{-2}(I)=0$ and $\HH^0(I)=0$}\}.\]
Then any object in $\mathcal{I}(c)$ is $\lba$-stable, for every $(\beta,\alpha) \in \bar{R}_-$ outside of $\bar{\Upsilon}_1$.
\end{proposition}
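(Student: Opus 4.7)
The plan is to identify any $I\in\mathcal{I}(c)$ with the shift $E[1]$ of an instanton sheaf $E$ of charge $c$, establish $\bar{\Upsilon}_1$ as an actual wall, and then adapt the sign analysis from the case $(\bar{\Upsilon}_1,\bar{\Upsilon}_2)$ of Theorem \ref{Theorem-Uniqueness} to rule out any further destabilizer in $\bar{R}_-$. The cohomological constraints $\mathcal{H}^{-2}(I)=\mathcal{H}^0(I)=0$ force $I=E[1]$ with $E=\mathcal{H}^{-1}(I)$, and the dimension vector $[0,c,2c+2,c]$ matches the monad shape $0\to\mathcal{O}_X(-1)^{\oplus c}\to\mathcal{O}_X^{\oplus 2c+2}\to\mathcal{O}_X(1)^{\oplus c}\to 0$ with $E$ as middle cohomology.

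For the actual wall, I would apply Lemma \ref{Lemma-28-referencia} to extract the $\mu$-stable locally free sheaf $K_2$, fitting in $0\to K_2\to\mathcal{O}_X^{\oplus 2c+2}\to\mathcal{O}_X(1)^{\oplus c}\to 0$; crucially this step works even when $E$ is not locally free. By the kernel variant of Proposition \ref{Prop-Mustability} noted in the Remark after it, $\mu$-stability of $K_2$ yields the strict determinant condition for the associated $2$-step complex, so by Lemma \ref{Lemma-Determinant Condition} the object $K_2[1]\in\Shione$, with $\dim(K_2[1])=[0,0,2c+2,c]$, is $\lba$-stable (not merely semistable) throughout $R_1$. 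Rotating the triangle $\mathcal{O}_X(-1)^{\oplus c}\to K_2\to E$ from Lemma \ref{Lemma-28-referencia} produces a short exact sequence $0\to K_2[1]\to I\to\mathcal{O}_X(-1)^{\oplus c}[2]\to 0$ in $\Shione$ whose phases match along $\bar{\Upsilon}_1$, so this is an actual wall for $I$.

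Next, Corollary \ref{Corollary-Unicity} supplies $\lba$-semistability of $I$ along $\bar{\Upsilon}_2$. For any hypothetical $\lba$-semistable destabilizer $F\in\Shione$ of $I$, Lemma \ref{Lemma-Dimension} forces $\dim(F)=[0,f,g,h]$; equation \eqref{Equation-Vertical} combined with semistability on $\bar{\Upsilon}_2$ gives $h\geq f$, while functoriality of the truncation embeds $\tau_{\geq -1}(F)\hookrightarrow K_2[1]$ and the determinant condition yields $\det(A):=h(2c+2)-cg\geq 0$. As in the proof of Theorem \ref{Theorem-Uniqueness}, the wall equation factors as
\[
f_{I,F}=\tfrac{\det(A)}{6}(-\beta^3)+(f-h)\,f_{I,\mathcal{O}_X(-1)[2]}.
\]
On $\bar{R}_-\setminus\bar{\Upsilon}_1$ each summand is non-negative since $\beta\leq 0$, $\det(A)\geq 0$, $(f-h)\leq 0$, and $f_{I,\mathcal{O}_X(-1)[2]}<0$ strictly outside $\bar{\Upsilon}_1$ in $\bar{R}_-$; therefore $f_{I,F}\geq 0$, giving $\lba$-semistability of $I$.

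Finally, to upgrade semistability to stability I would analyse the equality case: $f_{I,F}=0$ away from $\bar{\Upsilon}_1\cup\bar{\Upsilon}_2$ forces $\det(A)=0$ and $f=h$, which makes $\tau_{\geq-1}(F)$ a subobject of $K_2[1]$ with Chern character proportional to $\ch(K_2[1])$. The strict $\lba$-stability of $K_2[1]$ then forces $\tau_{\geq -1}(F)\in\{0,K_2[1]\}$, and chasing through the truncation exact sequence $0\to\tau_{\geq-1}(F)\to F\to\mathcal{O}_X(-1)^{\oplus f}[2]\to 0$ forces $F\in\{0,I\}$. The main obstacle is exactly this last step: the semistability argument from Theorem \ref{Theorem-Uniqueness} transfers directly once the previous ingredients are in place, so the technical crux is that Lemma \ref{Lemma-28-referencia} produces \emph{genuine} $\mu$-stability of $K_2$ in the possibly non-locally-free case, yielding the strict (not merely weak) determinant inequalities needed to exclude proper proportional subobjects.
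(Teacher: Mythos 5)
Your proposal is correct and follows essentially the same route as the paper: identify $\mathcal{I}(c)$ with shifted instanton sheaves, use Lemma \ref{Lemma-28-referencia} together with Proposition \ref{Prop-Mustability} to get the (strict) determinant condition for $\tau_{\geq-1}(I)=K_2[1]$, invoke Corollary \ref{Corollary-Unicity} for semistability along $\bar{\Upsilon}_2$, and conclude via the factored wall equation, ruling out proportional subobjects through the stability of $K_2[1]$. The only difference is presentational: you unfold the case $(\bar{\Upsilon}_1,\bar{\Upsilon}_2)$ of Theorem \ref{Theorem-Uniqueness} inline instead of citing it, which the paper does directly.
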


\begin{proof}
Let $c\geq 1$ be an integer and $(\beta,\alpha)\in \bar{R}_-$ outside of $\bar{\Upsilon}_1$. Firstly, observe that the objects in $\mathcal{I}(c)$ are cohomology of the linear monad \eqref{Instaton-Defini-Monad} and so are a shift by $[1]$ of the instanton sheaves. Let $I[1]$ be one of these objects. By a combination of Lemmas \ref{Corollary-Unicity}, \ref{Lemma-28-referencia} and Theorem \ref{Theorem-Uniqueness} we can conclude that $I[1]$ is $\lba$-semistable at the walls $\bar{\Upsilon}_1,\bar{\Upsilon}_2$ and it does not have any other wall in $\bar{R}_-$, unless it is semistable at every point of $\bar{R}_-$.

To see how the later is impossible, assume that there exists a $F\hookrightarrow I[1]$ in $\Shione$ such that $\lba(F)=\lba(I[1])$ for every point in $\bar{R}_-$ and outside of $\bar{\Upsilon}_1$. We know from Lemma \ref{Lemma-Determinant-Calculation} that an object $F$ has the same $\lba$-slope as $I[1]$ for a $2$-dimensional region if and only if its dimension vector is a multiple of $[0,c,2c+2,c]$. But this impossible because $\tau_{>-2}(I[1])$ would have a subobject with dimension vector a multiple of its dimension, which is impossible because $\tau_{>-2}(I[1])$ is $\lba$-stable by Proposition \ref{Prop-Mustability}. 

\end{proof}

The first part of the main theorem finds a description for the Bridgeland (semi)stable objects inside of the region $\bar{R}_-$.

\begin{mthm}\label{Main Theorem - 3}
On the case of $X=\mathbb{P}^3$, for any $(\beta,\alpha) \in \bar{R}_-$ outside and sufficiently close to the $\lambda$-wall $\bar{\Upsilon}_1$, inside the Bridgeland moduli space $\mathcal{M}^{ss}_{\beta,\alpha}[0,c,2c+2,c]$ we have the set \[ \mathcal{N}(c):=\bigcup_{T \in \mathcal{M}^{s}_{\beta,\alpha}[0,0,2c+2,c] }\{F \in \Ext^1(\opn(-1)^{\oplus c}[2],T)|\text{ with $\Hom(\opn(-1)[2],F)=0$}\}. \] 

We have the inclusions $\mathcal{I}(c) \hookrightarrow \mathcal{N}(c) \hookrightarrow \mathcal{M}^{ss}_{\beta,\alpha}[0,c,2c+2,c]$ such that $\mathcal{I}(c)$ is an open subset of $ \mathcal{M}^{ss}_{\beta,\alpha}[0,c,2c+2,c]$ and if the charge $c$ is odd then $\mathcal{N}(c)$ is the whole moduli space $\mathcal{M}^{s}_{\beta,\alpha}[0,c,2c+2,c]=\mathcal{M}^{ss}_{\beta,\alpha}[0,c,2c+2,c]$. 
\end{mthm}

\begin{proof}
Applying Corollary \ref{Corollary-uniqueness} it is clear that there exists a region in $\bar{R}_-$ outside $\bar{\Upsilon}_1$ where no other numerical $\lambda$-wall goes through. Let $W$ be this open neighborhood. Since the wall $\bar{\Upsilon}_1$ is a vanishing wall for the dimension vector $[0,c,2c+2,c]$ then every $\lba$-semistable object for $(\beta,\alpha) \in W$ is of the form  $F \in \Ext^1(\opn(-1)^{\oplus c}[2],T)$ with $T$ satisfying the determinant condition. Now we just have to prove that the necessary and sufficient condition for $F$ to be $\lba$-semistable.

Suppose that $F$ is in $\mathcal{N}(c)$ and is unstable after crossing $\bar{\Upsilon}_1$ to its outside and assume that $L$ is a subobject destabilizing $F$ in $(\beta,\alpha) \in W$ with dimension vector $[0,f,g,h]$. Then by Corollary \ref{Corollary-uniqueness} $L$ determines the same wall as $\bar{\Upsilon}_1$ and by equation \eqref{Wall Reduction -1} we have $\det(A)=h(2c+2)-cg=0$. If $ \tau_{>-2}(F)$ were in $\mathcal{M}^{s}_{\beta,\alpha}[0,0,2c+2,c]$ we would have either $\tau_{>-2}(L)$ with dimension $[0,0,2c+2,c]$ or equal to zero, in the former case $L$ would never destabilize $F$ due to Equation \eqref{Wall Reduction -1} and in the later case $L=\mathcal{O}_X(-1)^{\oplus k}[2]$ for some $k>0$.

Now, if $c$ is odd then every $2$-step complex with dimension $[0,0,2c+2,c]$ satisfies the determinant condition if and only if it is $\lambda_{\beta,\alpha}$-stable after crossing its $\lambda$-wall. Even more, an object $F$ is $\lba$-semistable for $(\beta,\alpha) \in W$ if and only if it is $\lba$-stable. 

The openess of $\mathcal{I}(c)$ comes from the fact that general maps in the linear complex \[ \mathcal{O}_X(-1)^{\oplus c} \overset{f}{\rightarrow} \mathcal{O}_X^{\oplus 2c+2} \overset{g}{\rightarrow} \mathcal{O}_X(1)^{\oplus c}\] determines a monad.
\end{proof}

\begin{rmk}
The condition $\Hom(\opn(-1)[2],F)=0$ for an object $F$ to be $\lba$-stable is a necessary condition for all $(\beta,\alpha)$ outside of $\bar{\Upsilon}_1$. This condition is equivalent to the vector space of global sections of $\mathcal{H}^{-2}(F)(1)$ being zero.
\end{rmk}

The analogous occurs to the right-hand side of $\bar{\Upsilon}_2$, but now we have the locally free and \emph{perverse instanton sheaves} instead of the instanton sheaves. For that we will need the following definition present in \cite[Definition 5.6]{AJM}.

\begin{definition}
An object $E \in \Db$ is a perverse instanton sheaf if it is isomorphic to a linear complex of the form \[ \mathcal{O}_X(-1)^{\oplus c} \overset{f}{\rightarrow} \mathcal{O}_X^{\oplus a} \overset{g}{\rightarrow} \mathcal{O}_X(1)^{\oplus c}\] such that the left derived dual of the restriction to a line, $L j^\ast(E)$, is a shift of a  sheaf, where $j: l \hookrightarrow X$ is a line inside of $X$.
\end{definition}

As noted in \cite{JD}, every derived dual of an instanton sheaf is a perverse instanton sheaf but the converse is not true. 

\begin{example}
We can take, for example, $E\in \Dbal(\mathbb{P}^3)$ to be the linear complex \[ 0 \rightarrow \opn \overset{\begin{pmatrix}
-y \\
x \\
0 \\
w
\end{pmatrix}}{\longrightarrow} \opn^{\oplus 4} \overset{\begin{pmatrix}
x &y &z &0
\end{pmatrix}}{\longrightarrow} \opn \]  which has cohomology equal to $\mathcal{O}_p$ at the highest degree, for the point $p\in \mathbb{P}^3$ satisfying the equation $x=y=z=0$. Therefore, we can choose a line that does not go through the point $p$ in order to conclude that $L j^\ast(E)$ is a sheaf element, making $E$ into a perverse instanton sheaf.

Moreover, it is not the derived dual of any instanton sheaf as its cohomology at the highest degree would necessarily need to be pure of dimension $1$, \cite{JMT}. Even more, we can calculate $E^\vee$ to determine that its cohomology at the highest degree would be $\mathcal{O}_q$, for the point $q$ determined by $x=y=w=0$, and in that way it cannot be the derived dual of an instanton sheaf because the derived dual is an involution.
\end{example}

\begin{proposition}\label{Lemma- Locally free Instantons}
Let $\mathcal{L}(c)\subset \mathcal{I}(c)$ be the subset of shifted locally free instanton sheaves of charge $c$ and $\mathcal{P}(c)=\{E \in \Shione |\text{ $E^\vee \in \mathcal{I}(c)$}\}$ the subset of perverse instanton sheaves that are dual of instanton sheaves of charge $c$. Then, for every $(\beta,\alpha) \in \bar{R}_+$ outside of $\bar{\Upsilon}_3$, every object in $\mathcal{L}(c)$ and $\mathcal{P}(c)$ is $\lba$-stable.
\end{proposition}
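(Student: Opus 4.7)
The strategy is to mirror the argument of Proposition \ref{Lemma-Instantons Moduli} on the opposite side of the quiver region, exploiting the symmetry provided by Lemma \ref{Lemma-DualMonad} to handle the perverse case. The two cases $\mathcal{L}(c)$ and $\mathcal{P}(c)$ are treated separately, with the latter requiring the extra step of transferring a determinant condition across derived duality.

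For $E = I[1] \in \mathcal{L}(c)$, the result is essentially the corollary following Theorem \ref{Theorem-Uniqueness}. Since $I$ is locally free, Lemma \ref{Lemma-28-referencia} provides both $K_1$ and $K_2$ as $\mu$-stable bundles, so by Proposition \ref{Prop-Mustability} (together with its remarked kernel-analog) both truncations $\tau_{\geq -1}(E)$ and $\tau_{\leq -1}(E)$ satisfy the determinant condition. Via the reformulation in the remark after Theorem \ref{Theorem-Uniqueness}, this is precisely the hypothesis making $\bar{\Upsilon}_1$ and $\bar{\Upsilon}_3$ the only actual walls for $E$ in $R_1$. The proportionality exclusion from the proof of Proposition \ref{Lemma-Instantons Moduli}, now applied via $\tau_{\leq -1}$ (whose $\lba$-stability is ensured by the $\mu$-stability of $K_1$), rules out any destabilizing subobject of dimension proportional to $[0,c,2c+2,c]$, yielding strict $\lba$-stability of $E$ on $\bar{R}_+$ outside $\bar{\Upsilon}_3$.

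For $E \in \mathcal{P}(c)$, write $E^\vee = I[1] \in \mathcal{I}(c)$. Even when $I$ is not locally free, $K_2$ is still $\mu$-stable by Lemma \ref{Lemma-28-referencia}, so $\tau_{\geq -1}(I[1])$ satisfies the determinant condition; Lemma \ref{Lemma-DualMonad} then transfers this to $\tau_{\leq -1}(E)$, establishing $\bar{\Upsilon}_3$ as an actual wall of $E$. To apply Theorem \ref{Theorem-Uniqueness} in the case $(\bar{\Upsilon}_2,\bar{\Upsilon}_3)$ dual to the one worked out in the paper, the remaining input is $\lba$-semistability of $E$ along $\bar{\Upsilon}_2$. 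I would derive this by applying the derived dual to the reflexivization sequence $0 \to Q \to I[1] \to I^{\ast\ast}[1] \to 0$ used in Corollary \ref{Corollary-Unicity}: the resulting triangle expresses $E$ as an extension of $(I^{\ast\ast}[1])^\vee$ (handled by the locally-free case already proved) and $Q^\vee$ (whose semistability along $\bar{\Upsilon}_2$ is the derived dual of the rank-$0$ instanton lemma), and semistability propagates to $E$ by two-out-of-three in $\mathcal{A}^{\beta,\alpha}$. The same proportionality exclusion via $\tau_{\leq -1}(E)$ then gives strict $\lba$-stability of $E$ outside $\bar{\Upsilon}_3$ in $\bar{R}_+$.

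The main obstacle I expect is establishing $\lba$-semistability of the perverse instantons along the full segment $\bar{\Upsilon}_2$. On the ray $\beta = 0$ all generating objects of $\Shione$ share the same $\lba$-slope, so semistability is automatic at the single intersection point $(0,1/\sqrt{3})$; the delicate point is extending this to the whole segment. The cleanest route is to dualize the cohomological argument from the rank-$0$ instanton lemma, transporting the $\ch_1^0$ positivity of a hypothetical destabilizer through the derived dual. If the sign bookkeeping proves awkward, an alternative is to rerun the argument from scratch for $Q^\vee$, applying the cohomology functors $\mathcal{H}^0_{\Coh}$ and $\mathcal{H}^0_{\beta}$ at $\beta=0$ to a putative destabilizing subobject and deriving a contradiction from the structure of $\mathcal{B}^{0}$.
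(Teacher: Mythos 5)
Your proposal is correct in substance, but it follows a noticeably more hands-on route than the paper, whose entire proof is the single sentence that the statement is a direct consequence of Proposition \ref{Lemma-Instantons Moduli} and Lemmas \ref{Lemma-DualMonad} and \ref{Lemma-28-referencia}: the author simply transfers the already established stability of $\mathcal{I}(c)$ on $\bar{R}_-$ outside $\bar{\Upsilon}_1$ across the derived dual, which inside the quiver region exchanges subobjects and quotients in $\Shione$, sends dimension $[0,f,g,h]$ to $[0,h,g,f]$, and intertwines $\lambda_{\beta,\alpha}$ with $-\lambda_{-\beta,\alpha}$, so that $\bar{R}_-$ and $\bar{\Upsilon}_1$ go to $\bar{R}_+$ and $\bar{\Upsilon}_3$; the case of $\mathcal{L}(c)$ is then absorbed by self-duality of locally free instantons (or, as you note, by the corollary to Theorem \ref{Theorem-Uniqueness}). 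You instead dualize only the determinant condition (Lemma \ref{Lemma-DualMonad} applied to $K_2[1]$) and rebuild stability on the right-hand side by re-running Theorem \ref{Theorem-Uniqueness} in the case $(\bar{\Upsilon}_2,\bar{\Upsilon}_3)$, which obliges you to produce $\lba$-semistability of the perverse instantons along $\bar{\Upsilon}_2$; your dualized double-dual sequence does this, because along $\beta=0$ the classes $[0,c-d,2(c-d)+2,c-d]$, $[0,c,2c+2,c]$ and $[0,d,2d,d]$ all have $\tau_{0,\alpha}=0$, so your ``two-out-of-three'' should really be phrased as extension-closedness of semistable objects of equal slope, which is exactly what holds here. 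The one step to state carefully is the semistability of $Q^\vee$ on the ray $\beta=0$: the clean argument is the pointwise duality you sketch (a destabilizing subobject of $Q^\vee$ in $\Shione$ dualizes to a quotient of $Q$ of negative slope, contradicting the rank-$0$ instanton lemma, using that at $\beta=0$ the dual negates $\tau$ and preserves $\rho$), whereas your fallback of rerunning the cohomological proof verbatim is not immediate, since $Q^\vee$, unlike $Q$, need not be a sheaf concentrated in a single degree. What your route buys is that it never invokes the global claim that derived duality intertwines Bridgeland stability at $(\beta,\alpha)$ and $(-\beta,\alpha)$, a claim the paper uses implicitly without proof; the cost is redoing the wall analysis that the paper's duality argument gets for free.
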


\begin{proof}
This statement is a direct consequence of Proposition \ref{Lemma-Instantons Moduli}, Lemma \ref{Lemma-DualMonad} and Lemma \ref{Lemma-28-referencia}.
\end{proof}

\begin{mthm}\label{Main Theorem - 4}
For any $(\beta,\alpha) \in \bar{R}_+$ outside and sufficiently close to the $\lambda$-wall $\bar{\Upsilon}_3$, inside the Bridgeland moduli space $\mathcal{M}^{s}_{\beta,\alpha}[0,c,2c+2,c]$ we have the set \[ \tilde{\mathcal{N}}(c):=\bigcup_{T \in \mathcal{M}^{s}_{\beta,\alpha}[0,0,2c+2,c] }\{F \in \Ext^1(T,\opn(1)^{\oplus c})|\text{ with $\Hom(F,\opn(1))=0$}\}. \] 

We have the inclusions $\mathcal{L}(c), \mathcal{P}(c) \hookrightarrow \tilde{\mathcal{N}}(c) \hookrightarrow \mathcal{M}^{s}_{\beta,\alpha}[0,c,2c+2,c]$. In addition, if the charge $c$ is odd, then $\tilde{\mathcal{N}}(c)$ is the whole moduli space $\mathcal{M}^{s}_{\beta,\alpha}[0,c,2c+2,c]=\mathcal{M}^{ss}_{\beta,\alpha}[0,c,2c+2,c]$. 
\end{mthm}

\begin{proof}
    The proof is analogous to that of Main Theorem \ref{Main Theorem - 3}, where we consider $\bar{R}_+$ and the outside of the actual $\lambda$-wall $\bar{\Upsilon}_3$ instead of $\bar{R}_-$ and the outside of the actual $\lambda$-wall $\bar{\Upsilon}_1$. The main difference here is, in the case of $\mathbb{P}^3$, that we need to consider as well the objects in $\mathcal{P}(c)$ which come from crossing $\bar{\Upsilon_2}$.
\end{proof}
    
\begin{rmk}          
In the case of $X=Q_3$ we can only obtain that the objects in $\mathcal{L}(c)$ are inside the moduli space of stable objects $\mathcal{M}_{\beta,\alpha}(c)$, due to the fact that we do not have a well-established notion of non-locally free instanton sheaves in $Q_3$ to obtain both our version of Main Theorem \ref{Main Theorem - 3} and, consequently, the stability of the objects in $\mathcal{P}(c)$.  
\end{rmk}

The above construction can be made more explicit in the case of $c=1$. The quiver approach to the study of the moduli space of objects with dimension vector $[0,1,4,1]$ was done by Jardim--daSilva, see \cite{JD}. In the next example, we will show how to obtain the same description of the walls by using this different method.

\begin{example}
Let $c=1$ and $E$ an object in $\Shione$ with $\dim(E)=[0,1,4,1]$. Firstly, observe that $\tau_{>-2}(E)$ or $\tau_{\leq -1}(E)$ satisfies the determinant condition if and only if $E$ is $\lba$-semistable at $\bar{\Upsilon}_2$. This is a consequence of Lemma \ref{Lemma-Determinant-Calculation}, by observing that an object destabilizes $E$ at $\bar{\Upsilon}_2$ if and only if it $\lambda$-destabilizes both $\tau_{>-2}(E)$ and $\tau_{\leq -1}(E)$. 

Applying Theorem \ref{Theorem-Uniqueness} we note that the only possible actual $\lambda$-walls for an object with dimension vector $[0,1,4,1]$ are $\bar{\Upsilon}_i$. Hence, we can apply the Theorem \ref{Theorem-Uniqueness} to prove that instanton sheaves are stable at $\bar{R}_-$, outside of $\bar{\Upsilon}_1$, while locally free and perverse instanton sheaves which are duals of instanton sheaves are stable at $\bar{R}_+$, outside of $\bar{\Upsilon}_3$.

To prove that these are the only stable objects in their respective spaces, we just have to show that if an object $E$ is stable at $\bar{R}_-$ outside $\bar{\Upsilon}_1$ then $\HH^{-2}(E)=0$ and $\HH^0(E)=0$.

\textit{$\HH^{-2}(E)=0$:} If $\HH^{-2}(E)=\opn(-1)$ then $\opn(-1)$ is a direct summand of $E$ and therefore $E$ is nowhere stable, outside the wall $\bar{\Upsilon}_1$. Otherwise, suppose $0\neq\HH^{-2}(E) \neq \opn(-1)$ then we would have $Q=\opn(-1)/\HH^{-2}(E)$ as a subsheaf of $\opn^{\oplus 4}$, which is absurd because $Q$ is a torsion sheaf.

\textit{$\HH^0(E)=0$:} Suppose that $\HH^0(E)\neq0$. Then $\HH^0(E)=\mathcal{O}_S(1)$ with $S\hookrightarrow \mathbb{P}^3$ a subvariety due to $\HH^0(E)$ being a quotient of $\mathcal{O}(1)$. Its twisted ideal $J(1)$ is the image of the map $f:\mathcal{O}^{\oplus 4} \rightarrow \mathcal{O}(1)$ defining $\tau_{>-2}(E)$ and we can see that its space of global sections has dimension $h^0(J(1))\leq 3$, making it so that $h^0(\ker(f))\neq 0$.

Moreover, this implies that $h^0(\HH^{-1}(E))\neq0$ because it is a quotient of $\ker(f)$ by $\mathcal{O}(-1)$, and therefore there exists a non zero map $\mathcal{O}[1]\rightarrow \HH^{-1}(E)[1] \rightarrow E$. Hence, there exists a destabilizing object for $E$ in $\bar{R}_-$ by Lemma \ref{Lemma-Determinant-Calculation}, an absurd.  

\end{example}

\section{Appendix: Computational Observations}\label{Sec-Comp}

As seen in the paper, the construction of examples of Ext-exceptional collections is very important to establishing quiver regions. One way of constructing these collections is by shifting the objects in a strong exceptional collection by the appropriate degree. For $\mathbb{P}^2$, it is a known result by Rudakov \cite{Rud} that the only strong exceptional collections we can create are mutations of the canonical exceptional collection , proved to be strong by Beilinson \cite{Beil}. 

\begin{question}
Is there an Ext-exceptional collection in $\mathbb{P}^3$ satisfying the upper-half plane for any point in $\mathbb{H}$ with $\alpha>2$ and $s=\frac{1}{3}$?
\end{question}

A positive answer to this question would guarantee that the moduli space of Bridgeland semistable objects are projective, for every geometric stability condition, for example.

In this appendix we describe a computational approach that lead to Question $1$. There is no known list of all full exceptional collections in $\mathbb{P}^3$ but there exists a conjecture about the transitivity of the action of the group $B_n \rtimes \mathbb{Z}^n$ over the space of strong exceptional collections, where $B_n$ is the $n$-braid group acting by mutations and $\mathbb{Z}^n$ is the action by shifts, see \cite{BP}. Our idea was to use mutations to inductively define strong exceptional collections, with easy to compute Chern characters due to their additive property, and test for the upper-half plane condition using a parametrization of lines going through the origin.

We start by defining the notion of mutation of an exceptional collection and their computational properties. 

\begin{definition}
Let $\{E_0,E_1\}$ be a exceptional pair in $\Db$, a left (resp. right) mutation of the exceptional pair is the object $\mathcal{L}_{E_0}E_1$ (resp. $\mathcal{R}_{E_1}E_0$) defined by the following distinguished triangles \[ \mathcal{L}_{E_0}E_1 \rightarrow \Hom^\bullet(E_0,E_1)\otimes E_0 \rightarrow E_1\] \[(E_0 \rightarrow \Hom^\bullet(E_0,E_1)^\ast \otimes E_1  \rightarrow \mathcal{R}_{E_1}E_0).\]
\end{definition}

\begin{definition}
For an exceptional collection $\mathcal{E}=\{E_0,...,E_k\}$ in $\Db$ we can define the Left and Right mutations of $\mathcal{E}$ by \[ \mathcal{L}_i\mathcal{E}=\{E_0,...,E_{i-1},\mathcal{L}_{E_i}E_{i+1},E_i,E_{i+2},...,E_k\} \] \[\mathcal{R}_i\mathcal{E}=\{E_0,...,E_{i+1},\mathcal{R}_{E_{i+1}}E_i,E_{i+2},...,E_k\} \]
\end{definition}

It is known that a mutation of an exceptional collection is also an exceptional collection, they also preserve fullness. That is not true for strong exceptional collections, in general, but in our case this is actually true.

The following theorem describe what happens when mutating a strong exceptional collection and the nature of the objects in a strong exceptional collection.

\begin{theorem}\label{Teo-Bondal}\cite[Section 9]{Bon1}
Suppose that $\mathcal{E}=\langle E_0,...,E_n\rangle$ is a full exceptional collection of sheaves in $\Dbal(X)$ for a $n$-dimensional manifold $X$. Then $\mathcal{E}$ is a  strong exceptional collection and any mutation of $\mathcal{E}$ is a full strong exceptional collection.
\end{theorem}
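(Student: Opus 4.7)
The strategy is to handle the two assertions in sequence: first that a full exceptional collection of sheaves is automatically strong, and then that any mutation preserves the property of being a full strong exceptional collection. The main tool throughout will be Serre duality combined with the helix structure attached to a full exceptional collection.

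For the strongness of $\mathcal{E} = \{E_0, \ldots, E_n\}$, one needs to show $\Hom^k(E_i, E_j) = 0$ for $k \neq 0$ and $i \leq j$, the case $i > j$ already being part of exceptionality and the case $i = j$ following from each $E_i$ being exceptional. Since each $E_i$ is a sheaf, negative $\Hom^k$ vanish automatically, so the content is in $k > 0$. The key input is Serre duality:
\[ \Hom^k(E_i, E_j) \cong \Hom^{n-k}(E_j, E_i \otimes \omega_X)^{\vee}. \]
Because $\omega_X$ is a line bundle, $E_i \otimes \omega_X$ is again a sheaf. By the helix theory for a full exceptional collection of length $n+1$ on an $n$-dimensional variety, the Serre functor $S_X(-) = (-) \otimes \omega_X[n]$ shifts the helix indices by $-(n+1)$, so that $E_i \otimes \omega_X$ can be identified with the helix element at position $i-(n+1)$ up to a degree shift. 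Combining this identification with the exceptionality of consecutive length-$(n+1)$ windows of the helix then forces the required vanishing in positive degree.

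For the preservation under mutations, I would induct on the number of mutations, reducing to the case of a single left (or right) mutation $\mathcal{L}_i\mathcal{E}$. Fullness is preserved by the standard mutation theory of exceptional collections. For strongness, the defining triangle
\[ \mathcal{L}_{E_i}E_{i+1} \to \Hom^{\bullet}(E_i, E_{i+1}) \otimes E_i \to E_{i+1} \]
degenerates to a short exact sequence of sheaves once $\mathcal{E}$ is known to be strong, since in that case $\Hom^{\bullet}(E_i, E_{i+1})$ is concentrated in degree $0$. Applying $\Hom(-,-)$ to this triangle and using the long exact sequence, together with the strongness hypothesis on the remaining members of $\mathcal{E}$, produces the vanishing of higher $\Hom$ groups between $\mathcal{L}_{E_i}E_{i+1}$ and the other members of $\mathcal{L}_i\mathcal{E}$.

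The main obstacle is the helix bookkeeping in the first step: the helix extension is defined by iterated mutations which, a priori, can leave the subcategory of sheaves, whereas tensoring by $\omega_X$ manifestly preserves sheaves. Reconciling these two viewpoints and making the identification of $E_i \otimes \omega_X$ with a helix element precise is the delicate point of the argument, and it crucially uses the hypothesis that the collection has length $n+1$ on an $n$-dimensional manifold; the length-dimension match is exactly what makes the Serre shift line up cleanly with a periodicity of the helix.
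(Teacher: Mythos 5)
You are not being compared against an in-paper argument here: the paper only cites this statement from Bondal (\cite[Section 9]{Bon1}), so your proposal has to stand on its own, and at both of its key points it asserts precisely the substantive content of Bondal's theorem rather than proving it. In the first half, after Serre duality you need $\Hom^{n-k}(E_j,E_i\otimes\omega_X)=0$ for $0<k\le n$, and you correctly identify $E_i\otimes\omega_X$ with the helix element $E_{i-(n+1)}$ (in fact with no shift when the length is $n+1$ and $\dim X=n$). But the pair $(E_j,E_{i-(n+1)})$ is separated by $(j-i)+(n+1)\ge n+2$ steps of the helix, so it never lies in a common length-$(n+1)$ window, and exceptionality of foundations says nothing about it; indeed $\Hom^{n}(E_j,E_{i-(n+1)})\cong\Hom(E_i,E_j)^{\vee}$ is typically nonzero. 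What you actually need is that backward $\Hom$'s across more than one period are concentrated in degree $n$, and by the very Serre duality you invoke this is \emph{equivalent} to the strongness you are trying to establish — the step is circular. Bondal's argument is not a formal consequence of exceptionality plus Serre duality; it uses fullness in an essential way (the dual collection/resolution of the identity, respectively a careful induction through the chain of mutations computing the Serre functor, with control of the cohomological amplitude of sheaf mutations).

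The mutation step contains two further unproved assertions. Strongness of $\mathcal{E}$ does not make the triangle $\mathcal{L}_{E_i}E_{i+1}\to\Hom(E_i,E_{i+1})\otimes E_i\to E_{i+1}$ degenerate into a short exact sequence of sheaves: it only identifies the middle term; $\mathcal{L}_{E_i}E_{i+1}$ is a sheaf iff the evaluation map $\Hom(E_i,E_{i+1})\otimes E_i\to E_{i+1}$ is surjective, which is not a formal consequence of strongness and is itself part of what Bondal proves in this minimal-length situation. Moreover, even granting sheafiness, the long exact sequences do not deliver the vanishing "for free'': applying $\Hom(E_j,-)$ with $j<i$ to the triangle identifies $\Hom^1(E_j,\mathcal{L}_{E_i}E_{i+1})$ with the cokernel of the composition map $\Hom(E_j,E_i)\otimes\Hom(E_i,E_{i+1})\to\Hom(E_j,E_{i+1})$, and likewise the vanishing of $\Hom^{-1}(\mathcal{L}_{E_i}E_{i+1},E_j)$ amounts to injectivity of a precomposition map; surjectivity of such composition maps is a genuine property of these length-$(\dim X+1)$ sheaf collections (geometricity of the helix), not of strong exceptional collections in general. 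So both halves of your argument stop exactly where the real work begins; to complete the proof you would need to reproduce Bondal's analysis rather than the formal Serre-duality and long-exact-sequence bookkeeping.
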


\begin{theorem}\label{Teo-Pos}\cite{Pos}
Let $X$ be a smooth projective variety for which $n=\dim \Knum(X)-1=\dim(X)$. Then for any full strong exceptional collection $E_0,...,E_n$ in $\Dbal(X)$, the objects $E_i$ are shifts of locally free sheaves by the same number $a \in \mathbb{Z}$. 
\end{theorem}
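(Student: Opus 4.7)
The plan is to separate the statement into two independent claims: (a) each $E_i$ is isomorphic in $\Db$ to a shift $F_i[a_i]$ of a single coherent sheaf $F_i$, and (b) the integers $a_i$ all coincide. For (a), I would analyze the cohomology sheaves $\mathcal{H}^j(E_i)$ via canonical truncation triangles and then exploit the K-theoretic assumption $\rk\,\KnumX = n+1$. Fix $E_i$, let $b$ be the top degree of nonvanishing cohomology, and consider the triangle $\tau_{<b}(E_i) \to E_i \to \mathcal{H}^b(E_i)[-b]$. Applying $\Hom^\bullet(E_i,-)$ and using $\Hom^k(E_i,E_i) = \delta_{k,0}\mathbb{C}$ produces constraints linking $\mathcal{H}^b(E_i)$ to the other cohomology sheaves. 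Because $\{[E_j]\}$ is a $\mathbb{Z}$-basis of $\KnumX$ (fullness plus the rank condition), $[\mathcal{H}^b(E_i)]$ is a $\mathbb{Z}$-linear combination of the $[E_j]$; combining this with Serre duality $\Hom^k(A,B) \simeq \Hom^{n-k}(B, A \otimes \omega_X)^\ast$ and the strong Ext-vanishing between distinct $E_j$'s forces $\tau_{<b}(E_i) = 0$, so $E_i \simeq \mathcal{H}^b(E_i)[-b]$.

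For (b), having written $E_i = F_i[a_i]$, the strong exceptional axiom translates to $\Ext^{k + a_i - a_j}(F_i, F_j) = 0$ for every $k \neq 0$, so between any two $F_i, F_j$ only a single Ext group, in degree $a_j - a_i$, can be nonzero. If $a_j > a_i$ then $\Hom(F_i,F_j) = 0$; if $a_j < a_i$ then Serre duality propagates the vanishing to all degrees. After globally shifting so that $\min_i a_i = 0$, one argues that the sub-collection $\{F_i : a_i = 0\}$ would already have to generate the whole $\Db$ (higher shifts cannot supply degree-zero Homs against sheaves), which is impossible unless this sub-collection already has $n+1$ elements, i.e. all shifts agree. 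Alternatively, one can invoke Theorem~\ref{Teo-Bondal}: since $X$ admits some full exceptional collection of sheaves and mutations preserve strongness, the sheaves $\{F_i\}$ produced in step (a) ought themselves to form a full exceptional collection of sheaves, which then forces uniform shifts.

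The main obstacle is step (a): individual cohomology sheaves $\mathcal{H}^j(E_i)$ are not known a priori to be rigid or exceptional, and the naive spectral sequence arguments on truncations give only inequalities rather than the sharp vanishings one needs. The decisive leverage is the rank condition $\dim \KnumX = n+1$, which turns each class $[\mathcal{H}^j(E_i)]$ into an explicit $\mathbb{Z}$-vector in the basis $\{[E_j]\}$, against which the $\Hom$-vanishing conditions become genuine numerical constraints rather than soft inequalities. Without this K-theoretic input the conclusion fails (on more general varieties, exceptional objects with cohomology spread across several degrees exist), so any honest proof must exploit the rank hypothesis in an essential way.
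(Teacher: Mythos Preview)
The paper does not prove this theorem: it is quoted verbatim from \cite{Pos} and no argument is supplied in the text. There is therefore no ``paper's own proof'' to compare your proposal against.

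As for the proposal itself, it is a reasonable strategic outline but not yet a proof. Two concrete gaps:

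\textbf{Step (a) is only a plan.} You correctly identify that the rank hypothesis $\rk\Knum(X)=n+1$ must be the engine, but you never actually run it: writing $[\mathcal{H}^b(E_i)]$ in the basis $\{[E_j]\}$ and ``combining with Serre duality'' does not by itself force $\tau_{<b}(E_i)=0$. One needs a genuine induction on the amplitude of the cohomology, together with a careful bookkeeping of which Ext-groups Serre duality trades, and you have not supplied this. You yourself flag this (``naive spectral sequence arguments \ldots\ give only inequalities''), but then do not close the gap.

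\textbf{Local freeness is missing.} The statement asserts the $E_i$ are shifts of \emph{locally free} sheaves, not merely coherent ones. Your argument, even if completed, would only yield $E_i\simeq F_i[a_i]$ with $F_i\in\Coh$. Showing each $F_i$ is a vector bundle is a separate step (in Positselski's argument it comes from analyzing the dual collection and using that $\dim X=n$), and you do not address it.

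Step (b) is also soft: the claim that the sub-collection $\{F_i:a_i=0\}$ ``would already have to generate'' $\Db$ is asserted, not proved, and the alternative route via Theorem~\ref{Teo-Bondal} is circular in spirit, since that theorem presupposes a collection of sheaves to begin with.
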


\begin{rmk}\label{Mu-Inequality}
For a smooth $n$ dimensional projective variety X, the Theorems \ref{Teo-Bondal} and \ref{Teo-Pos} can be applied in conjunction so that $\Hom^\bullet(E_i,E_{i+1})=\Hom^0(E_i,E_{i+1})\neq 0$ if $\mathcal{E}=\{E_i\}_i$ is a strong exceptional collection, because otherwise $\mathcal{L}_{E_i}E_{i+1}=E_{i+1}[1]$ and that would be a contradiction due to every strong exceptional collection, in this case $\mathcal{L}_i(\mathcal{E})$, consisting of sheaves shifted by the same number in $\mathbb{Z}$. Another consequence of this is that if the $E_i$ are all $\mu$-stable then $\mu(E_i)<\mu(E_{i+1})$.
\end{rmk}

With this at hand, we can describe an algorithm to produce candidates of full Ext-exceptional collections capable of generating a heart of a bounded $t$-structure satisfying the upper-half plane condition. As a consequence, we were not able to find a point $(\beta,\alpha)$ for $s=1/3$ in the upper-half plane of stability conditions $\mathbb{H}$ with $\alpha>2$ where a single example of full Ext-exceptional collection was capable of satisfying the conditions imposed by the algorithm(a numerical version of the upper-half plane condition).

\begin{algorithm}
We start with a set pre-determined of Chern characters of a given strong exceptional collection of locally free sheaves $\mathcal{E}=\{E_0,...,E_3\}$ that is the strings \[c[i]=(\ch_0(E_i),\ch_1(E_i),\ch_2(E_i),\ch_3(E_i))=(\ch_j(E_i))_{0\leq j \leq n}.\] With this string at hand we can calculate the Chern character of the mutations $\mathcal{L}_i\mathcal{E}$ and $\mathcal{R}_j\mathcal{E}$ by realizing that in a strong exceptional collection of locally free sheaves we have $\Hom^\bullet(E_i,E_j)=\Hom(E_i,E_j)$ and \[\dim(\Hom(E_i,E_j)) =\chi(E_i,E_j)=\chi(E_i\otimes E_j^\vee, \opn),\] where $\chi(E_i\otimes E_j^\vee, \opn)$ can be calculated by the Hirzebruch-Riemann-Roch, as in \cite[Theorem 9.3]{BM}, and $\ch(E_i \otimes E_j^\vee)=\ch(E_i)\cdot \ch(E_j)^\vee$ with $(\ch(E_j)^\vee)_i=(-1)^i\ch_i(E_j)$.

Once we know $\Tilde{c}[j]=(\ch_j(\mathcal{L}_k\mathcal{E}))_{0\leq j \leq 3}$ or $\Bar{c}[j]=(\ch_j(\mathcal{R}_k\mathcal{E}))_{0\leq j \leq 3}$, for all $k=0,1,2$, we can calculate if this string satisfy the numerical conditions to which any object $Q_j$ with $\ch(Q_j)=\Tilde{c}[j]$ or $\ch(Q_j)=\Bar{c}[j]$ and $Q_j \in <\mathcal{A},\mathcal{A}[1]>$ is subject to, for all $j$, such as
\begin{itemize}
    \item[(i)] $\nu_{\beta,\alpha}(Q_0)>0$ and $\mu(Q_0)>\beta$ (if $Q_0[3] \in \mathcal{A}[1]$)
    \item[(ii)] Either: \subitem $\nu_{\beta,\alpha}(Q_1)>0$ and $\mu(Q_1)>\beta$ (if $Q_1[2] \in \mathcal{A}$), \subitem $\nu_{\beta,\alpha}(Q_1)\leq0$ and $\mu(Q_1)>\beta$ (if $Q_1[2] \in \mathcal{A}[1]$) or \subitem $\nu_{\beta,\alpha}(Q_1)>0$ and $\mu(Q_1)\leq \beta$ (if $Q_1[2] \in \mathcal{A}[1]$)
    \item[(iii)] Either: \subitem$\nu_{\beta,\alpha}(Q_2)<0$ and $\mu(Q_2)>\beta$ (if $Q_2[1] \in \mathcal{A}$), \subitem $\nu_{\beta,\alpha}(Q_2)\leq0$ and $\mu(Q_2)\leq \beta$ (if $Q_2[1] \in \mathcal{A}$) or \subitem  $\nu_{\beta,\alpha}(Q_2)>0$ and $\mu(Q_2)\leq\beta$ (if $Q_2[1] \in \mathcal{A}[1]$)
    \item[(iv)] $\nu_{\beta,\alpha}(Q_3)>0$ and $\mu(Q_3)>\beta$ (if $Q_3 \in \mathcal{A}$)
\end{itemize}

If any of the conditions $(i),(ii),(iii),(iv)$ is not satisfid the algorithm returns $0$ with respect to this mutation $\mathcal{L}_k \mathcal{E}$ or $\mathcal{R}_k\mathcal{E}$, otherwise we continue the algorithm to see if $\mathcal{L}_i\mathcal{E}$ or $\mathcal{R}_i\mathcal{E}$ satisfy the upper-half plane condition. This is done by testing if all the images of $Z_{\beta,\alpha,1/3}(\Tilde{c}[j])$ or $Z_{\beta,\alpha,1/3}(\Bar{c}[j])$ are above a given line with slope $\phi$, with $\phi$ varying from $0$ to $\pi$ with increments of $0.01\pi$. In the end, if the collection mutated satisfies $(i),(ii),(iii),(iv)$ and the upper-half plane condition then the algorithm returns $1$. This can be iterated further for any of the mutated collections.

\end{algorithm}

One other computational approach to the problem of finding actual $\lambda$-walls in a quiver region $R$ for a fixed dimension vector $[a,b,c,d]$ is to determine its numerical walls, these will be a finite set due to Lemma \ref{Lemma-Dimension}, and apply the numerical conditions

\begin{itemize}
    \item[(i)](Positivity) $\rho_{\beta,\alpha}(v)\geq 0$,
    \item[(ii)](Generalized Bogomolov inequality) $Q_{\beta,\alpha}(v)\geq 0$
\end{itemize}

to any possible destabilizing dimension vector $v=[a',b',c',d']$. These necessary conditions restrict the numerical walls to a finer set which can be studied using Proposition \ref{lem}.

\begin{definition}
A region C of the upper-half plane $\mathbb{H}$ is said to be $\lambda$-bounded with respect to $E \in \Dbal(X)$ if there exists $a,b \in \mathbb{R}$ such that, for every $(\beta,\alpha) \in \mathbb{H}$ and a fixed $s \in \mathbb{R}_{>0}$, the $\mathbb{C}$-slope of $E$ is in $[a,b] \subset (0,1)$.
\end{definition}

\begin{example}
For any object $E \in \Db$, one clear example of a $C$ is a compact subset of the upper-half plane not intersecting $\Theta_E$, 
\end{example}

\begin{proposition}\label{lem}
Let $E \in \Db$ and $P,Q$ be points in a $\lambda$-bounded region $C$ such that exists $\gamma:[0,1] \rightarrow C$ a continuous curve with endpoints in $P$ and $Q$ respectively. If $E \notin \mathcal{A}^P$ and $E \in \mathcal{P}(\lambda_{Q,s}(E)) \subset \mathcal{A}^Q$, then there exists $w \in (0,1]$ such that for every $t \in [w,1]$, $E \in \mathcal{A}^{\gamma(t)}$ and $E$ is $\lambda_{\gamma(w),s}$-unstable. 
\end{proposition}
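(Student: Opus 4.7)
The plan is to define $w$ as the smallest parameter for which $E$ stays in the heart along the terminal segment of $\gamma$ back to $Q$, and then to show that $E$ must be $\lambda$-unstable at $\gamma(w)$ via an openness-of-semistability argument. Set
\[
T := \bigl\{t \in [0,1] : E \in \mathcal{A}^{\gamma(s)} \text{ for every } s \in [t,1]\bigr\}.
\]
By hypothesis $1 \in T$ and $0 \notin T$; moreover $T$ is upward closed in $[0,1]$ (if $t \in T$ and $t' \ge t$, then $t' \in T$), so it is an interval of the form $[w,1]$ or $(w,1]$ with $w := \inf T \in (0,1]$. The proof then reduces to (i) showing the infimum is attained, so that $T = [w,1]$, and (ii) showing that $E$ is $\lambda_{\gamma(w),s}$-unstable.

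For (i), note that for every $s \in (w,1]$ the object $E$ admits a Harder-Narasimhan decomposition inside $\mathcal{P}_{\gamma(s)}((0,1])$, while the $\lambda$-boundedness of $C$ keeps the $\mathbb{C}$-slope of $E$ itself in a compact subinterval $[a,b] \subset (0,1)$. Using the standard continuity of Harder-Narasimhan filtrations and their phases along continuous paths of Bridgeland stability conditions (a consequence of the support property), the phases of the HN factors at $\gamma(s)$ remain in a compact subset of $(0,1]$ as $s \to w^+$, so passing to the limit yields an HN decomposition of $E$ at $\gamma(w)$ whose factors still lie in $\mathcal{P}_{\gamma(w)}((0,1])$. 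This places $E$ in $\mathcal{A}^{\gamma(w)}$, so $w \in T$.

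For (ii), I argue by contradiction. If $E$ were $\lambda_{\gamma(w),s}$-semistable, then $E \in \mathcal{P}_{\gamma(w)}(\phi)$ for the phase $\phi$ determined by $Z_{\gamma(w),s}(E)$, which the $\lambda$-boundedness forces to lie in $[a,b] \subset (0,1)$. By the openness of the semistable locus (again via the support property), there would exist an open neighborhood $\Omega$ of $\gamma(w)$ in $\Stab(X)$ inside which $E$ stays semistable with phase still in $(0,1)$, so $E \in \mathcal{A}^{\sigma}$ for every $\sigma \in \Omega$. Continuity of $\gamma$ would then produce some $t' < w$ with $\gamma(t') \in \Omega$, and combining with $w \in T$ yields $[t',1] \subset T$, contradicting $w = \inf T$. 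The main obstacle is step (i): since HN filtrations are locally constant only away from walls, justifying the limit argument requires invoking the wall-and-chamber structure near $\gamma(w)$ together with the $\lambda$-boundedness of $C$ to control the possibly jumping phases of the HN factors of $E$ as $s \to w^+$ and prevent any of them from escaping $(0,1]$.
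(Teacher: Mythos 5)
Your step (ii) and, more seriously, your step (i) both rest on principles that are not available. In (ii) you invoke ``openness of the semistable locus'': for a fixed object this is false in general --- semistability is a \emph{closed} condition on $\Stab(X)$ (only stability is open; a strictly semistable object such as a direct sum of equal-phase factors becomes unstable immediately off the wall). What you actually need there is weaker and true, namely that $E$ stays in the heart near $\gamma(w)$, and that follows from continuity of the extremal Harder--Narasimhan phases $\phi^{\pm}_{\sigma}(E)$ together with the phase margin coming from $\lambda$-boundedness, not from persistence of semistability. The genuine gap is (i): with your choice $w=\inf T$, attainment of the infimum can fail. ``Continuity of Harder--Narasimhan filtrations'' is not a valid tool (HN filtrations jump across walls; only $\phi^{\pm}$ vary continuously), and even using the correct statement the limit only gives $\phi^{-}_{\gamma(w)}(E)\geq 0$, $\phi^{+}_{\gamma(w)}(E)\leq 1$. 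Nothing prevents $\phi^{-}_{\gamma(t)}(E)\to 0$ as $t\to w^{+}$, in which case $E\notin\mathcal{A}^{\gamma(w)}$, $T=(w,1]$, and your $w$ does not satisfy the conclusion. Note that $\lambda$-boundedness constrains only the $\mathbb{C}$-slope of $Z_{\gamma(t),s}(E)$ itself, not the phases of its HN factors, so it cannot rule this out, and the wall-and-chamber structure you appeal to does not either: the locus where the minimal HN phase hits $0$ is exactly where objects exit the heart, and your path may reach it precisely at $t=w$.

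The paper avoids this choice of $w$ altogether. It uses that $\phi^{+}_{\gamma(t)}(E)$ and $\phi^{-}_{\gamma(t)}(E)$ are continuous, sets $f(t)=\phi^{+}_{\gamma(t)}(E)-\phi^{-}_{\gamma(t)}(E)\geq 0$, picks $\epsilon>0$ with $[a-\epsilon,b+\epsilon]\subset(0,1)$, and takes $w$ to be the left endpoint of the connected component $[w,1]$ of $f^{-1}([0,\epsilon])$ containing $1$. On all of $[w,1]$ the HN phases of $E$ are squeezed into $[a-\epsilon,b+\epsilon]\subset(0,1)$ because the slope of $Z(E)$ lies in $[a,b]$ and the spread of phases is at most $\epsilon$, so $E\in\mathcal{A}^{\gamma(t)}$ there; and since $E\notin\mathcal{A}^{P}$ forces $w>0$, minimality of the component gives $f(w)=\epsilon>0$, i.e.\ $E$ is $\lambda_{\gamma(w),s}$-unstable. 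In short: stop strictly before the phases can reach the boundary of $(0,1]$, rather than at the last moment $E$ lies in the heart; that is the idea your proposal is missing, and it is also the correct replacement for the openness claim in your step (ii).
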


\begin{proof}
We start by noting that $\phi_{\beta,\alpha}^+(E)$ and $\phi_{\beta,\alpha}^-(E)$ are continuous functions on $\mathbb{H}$ and therefore $f(t):[0,1] \rightarrow \mathbb{R}$
\begin{center}
    $f(t)= \phi_{\gamma(t)}^+(E)-\phi_{\gamma(t)}^-(E)$
\end{center}
is a non-negative continuous function. By the condition in $P$ we know that $f$ is not a trivial function with image $\{0\}$ and since it is a continuous image of a connected set, we know that $f([0,1])=[0,c]$ for some $c \in \mathbb{R}_{>0}$. 

Now let $\epsilon>0$ be a real number which makes $[a-\epsilon,b+\epsilon] \subset(0,1)$. It is clear that for every $t \in f^{-1}([0,\epsilon])$ we have $E \in \mathcal{A}^{\gamma(t)}$, we just have to choose the connected component $W$ of $f^{-1}([0,\epsilon])$ containing $1$ and this set will be $W=[w,1]$ for some $w \in R_{>0}$.

\end{proof}

The consequence of Proposition \ref{lem} is that for an object to be in a actual $\lambda$-wall, that is for it to became semistable, it has to first be unstable in $\mathcal{A}^{\gamma(w)}$. This allows for an inductive approach in this quiver regions, where we use the fact that twisting by $\mathcal{O}(-1)$ the quiver region $R_1$ intersect the original region $R_1$ at the line $\{-1\} \times (0,0.7)$, and if we can prove that an object is semistable at region $R_1$ then we can draw curves through the twisted region that don't go inside the regions "numerically possible" to prove that none of this numerical $\lambda$-walls are real.

\end{document}